\documentclass{article}

\usepackage{graphicx,url}
\usepackage{color}
\newcount\Comments  
\Comments=1   
\newcommand{\kibitz}[2]{\ifnum\Comments=0\textcolor{#1}{#2}\fi}

\textwidth 169mm
\textheight 230mm
\topmargin -5mm
\oddsidemargin -5mm

\usepackage{amsmath}
\usepackage{amssymb}
\usepackage[T1]{fontenc}
\usepackage{latexsym}
\usepackage{psfrag}
\usepackage{hyperref}
\usepackage{multirow}
\usepackage{tikz}
\usepackage{mathdots}
\usepackage{yhmath}
\usepackage{cancel}
\usepackage{color}
\usepackage{siunitx}
\usepackage{array,enumerate}
\usepackage{multirow}
\usepackage{amssymb}
\usepackage{gensymb}
\usepackage{tabularx}
\usepackage{booktabs}
\usepackage{dsfont,bbm}
\usetikzlibrary{fadings}
\usetikzlibrary{patterns}
\usetikzlibrary{shadows.blur}
\usetikzlibrary{shapes}

\DeclareMathAlphabet{\mathbx}{U}{BOONDOX-ds}{m}{n}

\newtheorem{theorem}{Theorem}
\newtheorem{prop}{Proposition}[section]
\newtheorem{definition}[prop]{Definition}
\newtheorem{remark}[prop]{Remark}
\newtheorem{example}[prop]{Example}

\newenvironment{proof}{\noindent{\textsc{Proof.}}} {$\hfill\Box$\vspace{0.1 cm}\\}

\newcommand{\R}{\mathbb R}

\newcommand{\N}{{\mathbb N}}

\newcommand{\C}{{\mathcal C}}

\newcommand{\I}{{\mathcal I}}
\newcommand{\PP}{\mathcal{P}}
\newcommand{\TT}{\mathcal{T}}

\newcommand{\MM}{\mathcal{M}}

\newcommand{\eps}{\varepsilon}
\newcommand{\Id}{\mathrm{Id}}
\newcommand{\Zero}{\mathbx{0}}
\newcommand{\FR}[1]{#1}
\newcommand{\FRc}[1]{}

\begin{document}

\title{Generalized solutions to bounded-confidence models}
\author{Benedetto Piccoli and Francesco Rossi
}                     
%
%
\maketitle
%


\begin{abstract}
Bounded-confidence models in social dynamics describe multi-agent systems, where each individual interacts only locally with others. Several models are written as systems of ordinary differential
equations with discontinuous right-hand side: this is a direct consequence
of restricting interactions to a bounded region with non-vanishing 
strength at the boundary. 
Various works in the literature analyzed properties of solutions,
such as barycenter invariance and clustering.
On the other side, the problem of giving a precise definition of solution,
from an analytical point of view, was often overlooked.
However, a rich literature proposing different concepts of solution to discontinuous differential equations is available. Using several concepts of solution, we show how existence is granted under general assumptions, while
uniqueness may fail even in dimension one, but holds for almost
every initial conditions.
Consequently, various properties of solutions depend on the used definition and initial conditions.
\end{abstract}

\section{Introduction}
In the last decades, researchers from many different fields explored the behavior of large systems of active particles or agents.
The latter, also called self-propelled, intelligent or greedy, refers
to entities with capability of decision making and, usually,
of altering the energy or other otherwise conserved quantities of the system. 
Examples include dynamics of opinions in social networks, animal groups,
networked robots, pedestrian dynamics and language evolution.
The dynamics is written as an Ordinary Differential Equation (ODE in the following) in large dimension and various mean-field, kinetic
and hydrodynamic limit descriptions were studied in the literature, see
\cite{ABFHKPPS19,Bertozzi,CCH,CFRT,CPT,DDM14,MBG16,PR18} and references therein.

One of the main phenomena is \emph{self-organization}
of the whole system, stemming from simple interaction rules at particle
level.  Such interaction rules are often motivated by relationships
among agents and thus referred to as \emph{social dynamics}
\cite{ACMPPRT17,PROSKURNIKOV201765,PROSKURNIKOV2018166}.
The most common self-organized configurations are: consensus  \cite{OFM07}, i.e. all agents reaching a common state; alignment, i.e. agents reach consensus on a subset of the state variables \cite{CFPT15}; clustering, i.e. agents grouping in a small number
of well-separated states \cite{JABIN20144165,MT14}.

Our attention is focused on \emph{bounded-confidence}
models, where each agent interact only with agents located
within a bounded surrounding zone
\cite{CFT12,haskovec2020simple,MB12}. 
One of the most well-known of such
model is the Hegselmann-Krause with agents interacting 
is placed within a given distance,
see \cite{blondelHK,HK}. 
A general model can be written as follows:
\begin{equation}\label{eq:HKintro}
\dot{x}_i=\sum_{j=1}^N a_{ij}(\|x_i-x_j\|) (x_j-x_i) \mbox{~~~~~with~~} a_{ij}(r)=\begin{cases}
\phi_{ij}(r) &\mbox{~~ if~} r\in[0,1)\\
0 &\mbox{~~ if~} r\in[1,+\infty)
\end{cases},
\end{equation}
where $x_i\in\R^n$ is the state of agent $i$ (e.g. position, opinion, speed), 
$N$ the number of agents. Functions
$\phi_{ij}:[0,1]\to\R^+$ represents the strength of interaction
between agent $i$ and $j$, that are supposed to be symmetric (i.e. $\phi_{ij}=\phi_{ji}$) from now on. 
The original model corresponds to  $\phi_{ij}\equiv 1$ and was written in discrete time. However, many extensions were considered in continuous time. 
As a consequence, we have the following crucial observation: the right hand side of \eqref{eq:HKintro} is a discontinuous function. For this reason, one needs to carefully select a concept of solution to such discontinuous ODE. In our opinion, such aspect has been often overlooked in the extensive literature about bounded-confidence models, with some notable exceptions, such as \cite{blondel1,blondel2,frasca}.

The study of ODEs with discontinuous right-hand side, dating back to Caratheodory, has played a crucial role in mathematical analysis and in control theory. We refer to \cite{clarkebook,Filippov,vinter} for an extensive overview of the subject. In this article, we will make use of the main concepts of solutions that have been defined in this context, and in particular we will discuss: {\bf classical, Caratheodory, Filippov, Krasovskii, Clarke-Ledyaev-Sontag-Subbotin (briefly CLSS), and stratified solutions}. We recall the precise definition of such solutions in Section \ref{s-sols} below.

It is easy to prove that classical solutions may not exist, but that they enjoy uniqueness. Instead, the first surprising result about solutions of the Hegselmann-Krause model will be the following.
\begin{theorem} \label{t-exun}
Consider \eqref{eq:HKintro} with $\phi_{ij}$ Lipschitz continuous
and $\phi_{ij}=\phi_{ji}$.
Then, there exists a solution (global in time) for every initial condition 
and for every definition of solution, except for classical.\\
Uniqueness of solutions does not hold for any of the definitions, except for classical (and for stratified for a fixed stratification). 
Nevertheless, uniqueness holds for almost every initial data for every definition.
\end{theorem}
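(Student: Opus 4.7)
The plan is to address the three assertions—existence, non-uniqueness, and almost-everywhere uniqueness—separately, case by case for the six solution concepts, reducing the last one to a measure-theoretic property of the discontinuity locus $\Sigma := \{x \in (\R^n)^N : \|x_i - x_j\| = 1 \text{ for some } i \ne j\}$. For existence I would first exhibit a single initial condition that forbids a classical solution: take $N=3$ collinear with $x_1(0)=0$, $x_2(0)=1/2$, $x_3(0)=1$ and $\phi_{ij}\equiv 1$. Pair $(1,3)$ is at distance exactly $1$ so $a_{13}(1)=0$; a direct computation gives $\dot x_1(0)=1/2$ and $\dot x_3(0)=-1/2$, so $\|x_1-x_3\|<1$ for arbitrarily small $t>0$ and $a_{13}$ jumps from $0$ to $\phi_{13}(1)>0$, contradicting continuity of $\dot x$. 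For the remaining five concepts the vector field in \eqref{eq:HKintro} is globally bounded and Borel measurable, and existence follows from the classical theorems: Filippov and Krasovskii via upper semicontinuity and compactness of the associated set-valued map; Caratheodory as a by-product (e.g.\ by mollification/compactness or as a Filippov selection); CLSS by general results in \cite{clarkebook}; and stratified by using the natural stratification of $(\R^n)^N$ induced by the signs of $\{\|x_i-x_j\|-1\}_{i<j}$, on each open stratum $f$ is locally Lipschitz and trajectories can be pasted at interfaces.

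I would then exhibit non-uniqueness with $N=2$ on $\R$, $x_1(0)=0$, $x_2(0)=1$, $\phi_{12}(1)>0$: the stationary trajectory is a solution for every definition because $a_{12}(1)=0$, while considering the Lipschitz ODE $\dot u = -2\phi_{12}(u)u$ for $u=x_2-x_1$ on $[0,1]$ starting from $u(0)=1$ produces a unique decreasing $C^1$ solution that satisfies \eqref{eq:HKintro} for all $t>0$, hence almost everywhere, and whose derivative at $t=0^+$ lies in the Filippov, Krasovskii and CLSS set-valued envelopes at $x(0)$; lifting to $(x_1,x_2)$ through the (preserved) barycenter gives a second trajectory for each of those definitions. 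The classical case is vacuous by the previous example, and for a \emph{fixed} stratification the vector field restricted to each stratum is locally Lipschitz, so classical uniqueness propagates.

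The main obstacle is the a.e.\ uniqueness claim. Off $\Sigma$ the vector field $f$ is locally Lipschitz, so from $x_0\notin\Sigma$ there is a unique classical solution $\Phi_t(x_0)$ on $[0,\tau(x_0))$, where $\tau(x_0)$ is the first hitting time of $\Sigma$. On $\{\tau=+\infty\}$ all six notions collapse to the classical one and uniqueness is immediate, so it suffices to show that $B:=\{x_0 : \tau(x_0)<+\infty\}$ has Lebesgue measure zero. I would index $B$ by the finite combinatorial data (pair $(i,j)$ first reaching distance $1$, and configuration of pairs currently interacting): for each such datum the condition $\|x_i(t)-x_j(t)\|=1$ at the first hitting time defines, via the implicit function theorem applied to the $C^1$ map $t\mapsto\|x_i(t)-x_j(t)\|^2-1$ along the Lipschitz flow, a Lipschitz hypersurface of $(\R^n)^N$; tangential contacts are absorbed by a Sard-type argument. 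Finitely many pairs and finitely many configurations give finitely many such hypersurfaces, and iterating over successive hits yields a countable union, which together with $\Sigma$ itself has measure zero. The delicate points are the tangential contacts and possible Zeno-like accumulation of crossings; I would exploit the symmetry $\phi_{ij}=\phi_{ji}$, which preserves the barycenter and furnishes a spatial-variance Lyapunov function, to rule out such pathological accumulation.
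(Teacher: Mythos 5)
Your proposal has genuine gaps in all three parts of the theorem, the most serious being the almost-everywhere uniqueness argument. The reduction ``it suffices to show that $B=\{x_0:\tau(x_0)<+\infty\}$ is Lebesgue-null'' is false: the set of initial data whose trajectory reaches the discontinuity set $\MM$ in finite time typically contains open sets. For instance, with $n=1$, $N=3$, $\phi_{ij}\equiv 1$ and $x_0=(0,\,0.95,\,1.9)$, the pairs $(1,2)$ and $(2,3)$ interact, all agents contract towards the barycenter, and $|x_1(t)-x_3(t)|$ decreases from $1.9$ through the value $1$; the same happens for every nearby initial datum, so $B$ has positive measure. Your ``finitely many Lipschitz hypersurfaces'' count conflates the condition $x(t)\in\MM_{ij}$ at a \emph{fixed} time (codimension one) with a condition on the initial datum: sweeping the flow preimages of $\MM_{ij}$ over all times fills a full-dimensional set. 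The correct route, which is the paper's Proposition \ref{p-aeuniq}, is to show that hitting $\MM$ at a point where a single pair touches distance $1$ transversally does \emph{not} destroy uniqueness (every Filippov solution is forced strictly across or strictly back, Claim a there), so non-uniqueness can only originate on the codimension-two set $\widehat{\MM}$ of tangential contacts ($\alpha_{ij}\in\{0,2\}$) or simultaneous double contacts; one must then exclude Zeno-type accumulation of crossings (Claim b) and finally show, by a Fubini/Hausdorff-measure argument, that the union of the (one-dimensional) backward trajectories emanating from the codimension-two set $\widehat{\MM}$ is Lebesgue-null. Your Sard-type and Lyapunov remarks do not substitute for these steps.

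Two further claims do not hold as stated. First, your two-agent example does not give non-uniqueness of CLSS solutions: CLSS solutions are uniform limits of sample-and-hold Euler polygons, not selections of a set-valued envelope, and starting from $x_1(0)=0$, $x_2(0)=1$ every Euler polygon is frozen (the sampled field is $0$ at every node), so the constant solution is the \emph{unique} CLSS solution — exactly Proposition \ref{prop:toy-ex}(iv); breaking CLSS uniqueness for \eqref{eq:HKintro} requires the much more delicate planar ten-agent construction of Section \ref{s-nonCLSS}. Second, Caratheodory existence does not follow ``by mollification/compactness or as a Filippov selection'': Filippov solutions need not satisfy the equation pointwise almost everywhere (sliding regimes), and the paper's existence proof must, at each arrival on $\MM$, select which interactions to activate via the test $\alpha_{ij}\lessgtr\phi_{ij}(1)$ (using $\alpha_{ij}'=\alpha_{ij}-2\phi_{ij}(1)$) so that the resulting continuous graph dynamics is consistent with the discontinuous field. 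Minor repairs: your classical non-existence example is fine in substance, but ``continuity of $\dot x$'' is not part of the definition — argue instead that $\lim_{t\to0^+}\dot x_1(t)=3/2$ forces the right derivative at $0$ to be $3/2\neq 1/2=f_1(x(0))$; and ``the classical case is vacuous'' is not a proof of the asserted \emph{uniqueness} of classical solutions, which still needs a Cauchy--Lipschitz-type argument. The Filippov/Krasovskii, CLSS and stratified existence sketches are essentially aligned with the paper.
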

The proof of the positive result can be found in Section \ref{s-main}.
Many examples, provided in the following sections, will show that the discontinuity can generate parameteric families of solutions.
The latter may be of combinatorial complexity in terms of the number
of agents $N$ and the dimension of the state space $n$.

After solving the questions about existence and uniqueness, we will focus on some properties of such solutions. In the rich literature about social dynamics models, some crucial properties of solutions were explored. Among them, we want to recall the following:
\begin{itemize}
\item[{\bf P1)}] The {\bf barycenter} $\bar{x}=\frac{1}{N}\sum_i x_i$ is {\bf invariant} along trajectories.
\item[{\bf P2)}] For every solution $x(\cdot)$, $x(t)$ converges for $t\to\infty$
to $x^{\infty}=(x^\infty_1,\ldots,x^\infty_N)\in\R^{nN}$, $x^\infty_i\in\R^n$, 
such that for every $1\leq i,j\leq N$ either $x^\infty_i=x^\infty_j$
or $\|x^\infty_i-x^\infty_j\|\geq 1$. This property is called {\bf clustering} and
the number of distinct agents among $x^\infty_i$ is the number of clusters.
\item[{\bf P3)}] The asymptotic state $x^\infty$ of P2)   only {\bf depends on the initial data} of the trajectory. In particular, the number of clusters only depends on the initial condition.
\end{itemize}
As we will see, each of such properties may fail to hold, depending on the concept of solution used. Indeed, our second main result is the following.
\begin{theorem}\label{t-prop}
Consider \eqref{eq:HKintro} with $\phi_{ij}$ Lipschitz continuous, $\phi_{ij}=\phi_{ji}$, then the following holds.\\
Classical solutions satisfy P1-2-3).\\
Caratheodory, Filippov, Krasovskii and CLSS solutions satisfy P1-P2) but not P3), in general.\\
Stratified solutions satisfy P1-2-3) for a fixed stratification,
but $x^\infty$ in P3) depends on the stratification.
\end{theorem}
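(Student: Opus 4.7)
The plan is to handle the three properties separately, using throughout the symmetry $\phi_{ij}=\phi_{ji}$. For P1 (barycenter invariance), the key observation is purely algebraic: for any symmetric coefficients $b_{ij}=b_{ji}$ one has $\sum_{i,j} b_{ij}(x_j-x_i)=0$ by antisymmetrizing the summand under the swap $(i,j)\leftrightarrow(j,i)$. This gives $\dot{\bar x}=0$ pointwise for classical and almost everywhere for Caratheodory solutions. For Filippov, Krasovskii and CLSS solutions, any limit of the right-hand side at a point $x$ still has the form $v_i=\sum_j b_{ij}(x_j-x_i)$ with symmetric coefficients: the only ambiguity lives on pairs with $\|x_i-x_j\|=1$, where $b_{ij}$ and $b_{ji}$ are jointly $0$ or jointly $\phi_{ij}(1)$ because $\phi_{ij}=\phi_{ji}$. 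The associated convex hull therefore lies in the kernel of the barycenter projection, and the differential inclusion forces $\dot{\bar x}=0$. For stratified solutions the same algebraic identity holds on every stratum, so barycenter invariance persists after concatenation.

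For P2 (clustering and convergence), I would use the variance $W(x)=\tfrac{1}{2}\sum_i \|x_i-\bar x\|^2$ as Lyapunov function. For any admissible velocity $v$ in the relevant set-valued field, which satisfies $\sum_i v_i=0$ by P1, a direct computation combined with the symmetry argument yields
\[
\nabla W(x)\cdot v \,=\, -\tfrac{1}{2}\sum_{i,j} b_{ij}\,\|x_i-x_j\|^2 \,\leq\, 0,
\]
so $W$ decreases along every solution. Boundedness of trajectories follows, together with the integrability $\int_0^\infty \sum_{i,j} b_{ij}(t)\|x_i(t)-x_j(t)\|^2\,dt<\infty$. Using Lipschitz continuity of $\phi_{ij}$ on $[0,1)$ and the uniform positivity of $a_{ij}$ on $\{\|x_i-x_j\|\leq 1-\delta\}$, a Barbalat-type argument shows that every pair $(i,j)$ either asymptotically satisfies $\|x_i-x_j\|\to 0$ or stays at distance at least $1$ in the limit, which gives the cluster partition for every solution concept.

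For P3, the classical case is immediate since classical solutions exist only on intervals where the interaction pattern $\{(i,j):\|x_i-x_j\|<1\}$ is locally constant, and standard ODE uniqueness applies there. For stratified solutions with a fixed stratification, uniqueness within each stratum plus the prescribed exit rule produces a unique forward trajectory from each initial datum; exhibiting two admissible stratifications that send a two-agent configuration on $\|x_1-x_2\|=1$ into different strata (``interacting'' versus ``non-interacting'') shows that $x^\infty$ itself depends on the stratification. Failure of P3 for Caratheodory, Filippov, Krasovskii and CLSS is established by explicit counterexamples from initial data on a discontinuity manifold: two measurable selections of the associated set-valued field are exhibited that lead to asymptotic states with different numbers of clusters, and membership of each selection in the appropriate field is verified directly. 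The principal obstacle throughout is the P2 step for Filippov, Krasovskii and CLSS, because trajectories may slide for positive time on $\{\|x_i-x_j\|=1\}$ with coefficients $b_{ij}$ taking intermediate values in the convex hull; verifying that such sliding modes are still compatible with the Lyapunov decay and force the asymptotic dichotomy between merging and separating pairs requires a case analysis of which convex-hull selections can sustain the equality $\|x_i-x_j\|=1$, and is where the delicate work lies.
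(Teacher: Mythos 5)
Your overall architecture (P1 by the symmetry cancellation, P2 by a Lyapunov/dissipation argument, P3 by uniqueness for classical/stratified and counterexamples for the rest) matches the paper's, and your choice of the variance $W(x)=\tfrac12\sum_i\|x_i-\bar x\|^2$ is a legitimate alternative to the paper's interaction potential $V=\sum_{i,j}\Phi_{ij}(\|x_i-x_j\|)$; since $W$ is proper on the affine subspace where the barycenter is fixed, it even gives boundedness of trajectories directly, where the paper has to invoke the contractivity of the convex hull (Proposition \ref{p-contractive}). Your observation that admissible Filippov/Krasovskii velocities carry \emph{symmetric} coefficients $b_{ij}=b_{ji}\in[0,\phi_{ij}(1)]$ on pairs at distance one is correct and is exactly what both the cancellation $\sum_i v_i=0$ and the dissipation identity need. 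However, there are two genuine gaps. First, your P2 argument stops at the pairwise dichotomy ($\|x_i-x_j\|\to 0$ or $\liminf\|x_i-x_j\|\geq 1$); but P2 asserts that $x(t)$ \emph{converges} to a limit $x^\infty$, and this does not follow from the dichotomy: Filippov solutions can slide on $\{\|x_i-x_j\|=1\}$ for arbitrarily long times (see Proposition \ref{p-Fsol}), during which cluster centers move, so one must rule out slow drift of the clusters. The paper closes this by showing $\int_0^\infty|\dot y^1(t)|\,dt<\infty$ for each cluster center $y^1$: intra-cluster contributions cancel by antisymmetry, and inter-cluster contributions involve pairs at distance close to $1$, hence are dominated by the integrable dissipation. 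Your dissipation $\sum_{i,j}b_{ij}\|x_i-x_j\|^2$ would support the same estimate, but the step is absent from your proposal, and you even frame the remaining ``delicate work'' as concerning the dichotomy rather than the convergence of positions, which is where the real difficulty lies (cf.\ the paper's remark that a naive LaSalle argument is unavailable).

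Second, your treatment of ``not P3'' for CLSS solutions is wrong in approach: you propose to exhibit ``two measurable selections of the associated set-valued field'' from an initial datum on the discontinuity manifold, but selections of the Filippov/Krasovskii field do not produce CLSS solutions. CLSS solutions are uniform limits of sample-and-hold (explicit Euler) schemes, and in the one-dimensional examples where selection-based non-uniqueness is easy (two or three agents at mutual distance one, Propositions \ref{prop:toy-ex} and \ref{p-Cprop}) the CLSS solution is in fact \emph{unique} (the constant one), so no violation of P3 can be extracted there. Establishing failure of P3 for CLSS requires constructing two distinct sequences of sample-and-hold approximations converging to different limits; the paper does this only in dimension two, via the dedicated ten-agent construction of Section \ref{s-nonCLSS} (for the variant model \eqref{e:HK1} a simpler three-agent planar example suffices, Proposition \ref{p-nonCLSS}, but for \eqref{eq:HK} itself the elaborate construction is needed). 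Without an argument of this type, the CLSS part of the second assertion of the theorem is unproved in your proposal. The remaining items (P1 for all concepts, P3 for classical via uniqueness, uniqueness of stratified solutions for a fixed stratification together with dependence of $x^\infty$ on the stratification via the two-agent example) are in line with the paper.
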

The proof of Theorem \ref{t-prop} is given in Section \ref{s-main}.\\
It is remarkable to observe that both solutions and their properties drastically vary when replacing $a_{ij}$ in \eqref{eq:HKintro} even in a single point, e.g. by choosing $a_{ij}(1)=1$. Indeed, the following last main result holds.
\begin{theorem}\label{t-prop2}
Consider \eqref{eq:HKintro} with $\phi_{ij}$ Lipschitz continuous, $\phi_{ij}=\phi_{ji}$, and $a_{ij}(r)$ replaced by 
\begin{equation}\label{e:HK1}a_{ij}(r)=\begin{cases}
1 &\qquad\mbox{~~if~~~} r\in[0,1],\\
0 &\qquad\mbox{~~if~~~} r\in(1,+\infty).
\end{cases}
\end{equation}
The sets of Krasovskii and Filippov solutions coincide with the ones  of  \eqref{eq:HKintro}.\\
The sets of classical, Caratheodory, CLSS and stratified solutions are different in the two cases.\\
All statements of Theorems \ref{t-exun} and \ref{t-prop} hold true in this case too. 
\end{theorem}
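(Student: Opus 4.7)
The plan is to address the three separate assertions in turn, with the bulk of the work coming in the verification of Theorems \ref{t-exun} and \ref{t-prop} for the modified dynamics.

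First, for the Filippov and Krasovskii equivalence, I would observe that the two right-hand sides differ only on the codimension-one set $\Sigma := \{x \in \R^{nN} : \|x_i - x_j\| = 1 \text{ for some } i\neq j\}$, which has Lebesgue measure zero and empty interior. The Filippov regularization is by construction insensitive to modifications on sets of measure zero, since a zero-measure set may always be removed inside the essential convex-hull construction, so the two Filippov set-valued maps coincide pointwise. For Krasovskii, on $\R^{nN} \setminus \Sigma$ the two systems agree and so do their closed convex hulls over small neighborhoods; at $x \in \Sigma$, the closed convex hull over any ball $B_\delta(x)$ already contains both lateral limits of $a_{ij}(\|\cdot\|)$ as one approaches $\Sigma$ (the values $1$ and $0$ for \eqref{e:HK1}, and $\phi_{ij}(1)$ and $0$ for \eqref{eq:HKintro}), so adding the single pointwise value $a_{ij}(1)$ leaves the closed convex hull unchanged. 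Hence the Krasovskii maps coincide too, and the corresponding solution sets agree.

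Second, to show that classical, Caratheodory, CLSS, and stratified solution sets genuinely differ between the two systems, I would exhibit a minimal example: take $N=2$, $n=1$ and initial datum $x_1(0)=0$, $x_2(0)=1$. For \eqref{eq:HKintro} the constant trajectory $x \equiv (0,1)$ is a classical (hence Caratheodory, CLSS and stratified) solution because $a_{12}(1)=0$ makes the right-hand side vanish identically along it. For the modified system \eqref{e:HK1}, along this same constant trajectory the right-hand side equals $(+1,-1)$ at every time, so $\dot x = 0$ fails to equal it on a set of full measure. This rules out the constant trajectory as a classical, Caratheodory, or CLSS solution of the modified system, and similarly for the stratified notion once the stratum corresponding to $\Sigma$ is activated. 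The same example thus exhibits a concrete difference in each of the four solution sets.

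Third, for the extension of Theorems \ref{t-exun} and \ref{t-prop}, the Filippov and Krasovskii statements are inherited immediately from the equivalence established above. For the remaining notions, the structural ingredients used in Section \ref{s-main} carry over almost verbatim: the discontinuity set is still $\Sigma$; the kernel $\phi_{ij}$ remains Lipschitz and symmetric; the barycenter argument for P1 relies solely on $a_{ij}=a_{ji}$ and on antisymmetry of $x_j-x_i$, both preserved under the modification; and the Lyapunov functional used to establish clustering in P2 decreases along any admissible selection irrespective of the pointwise value assigned on $\Sigma$. The a.e.\ uniqueness statement continues to hold because the set of initial data whose trajectories ever spend positive time on $\Sigma$ remains negligible.

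The main obstacle I anticipate is re-checking the Caratheodory and CLSS analyses precisely at those exceptional trajectories that do dwell on $\Sigma$: in the modified system the now-nonzero value of the right-hand side on $\Sigma$ rules out exactly the stationary-on-$\Sigma$ behavior that was admissible in \eqref{eq:HKintro}, so the menagerie of multi-valued solutions at the boundary is qualitatively reorganized. One must verify case by case that this reorganization neither destroys existence (there is always a selection pointing strictly inside the region $\|x_i-x_j\|<1$, and evolution away from $\Sigma$ is Lipschitz) nor breaks the negligibility argument underpinning a.e.\ uniqueness. Once these checks are made, Theorems \ref{t-exun} and \ref{t-prop} follow verbatim.
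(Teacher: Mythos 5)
Your first two assertions are essentially the paper's own argument and are fine. The Filippov/Krasovskii coincidence is proved exactly as you say (both one-sided limits of $a_{ij}$ already lie in the hulls, so the pointwise value at $\|x_i-x_j\|=1$ is immaterial), and your two-agent example at distance $1$ is the paper's toy example: the constant curve is a classical, Caratheodory, CLSS and (for the stratification $S_1$) stratified solution of \eqref{eq:HKintro}, but none of these for \eqref{e:HK1}. Two small repairs are needed there: for CLSS you cannot argue via pointwise failure of the ODE (CLSS solutions need not satisfy the equation at any time); argue instead that every sample-and-hold iterate immediately leaves the initial point and tracks the contracting solution, so the constant curve is not a uniform limit of such iterates. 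For the stratified notion, say explicitly that a stratified solution is piecewise classical, hence the constant curve cannot be stratified for \eqref{e:HK1} under \emph{any} stratification, while it is stratified for \eqref{eq:HKintro} under $S_1$.

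The genuine gap is in your third assertion. You state that Theorems \ref{t-exun} and \ref{t-prop} ``follow verbatim'' once case-by-case checks at the discontinuity set are made, but those checks are exactly the content to be supplied, and they cannot be obtained by minor perturbation of your two-agent example: for \eqref{e:HK1} with two agents at distance $1$ the solution is \emph{unique} in every sense (cf.\ Remark \ref{r-2agents}), so that example proves nothing about non-uniqueness of Caratheodory and CLSS solutions for the variant, which Theorem \ref{t-exun} requires. The paper uses counterexamples built specifically for \eqref{e:HK1}: a chain of $N\geq 3$ agents at mutual distance $1$ yields several distinct Caratheodory solutions (Propositions \ref{prop:HK-multi-R-ineq} and \ref{p-Cara1incl}), and a planar three-agent configuration $x_1=(0,0)$, $x_2=(1,0)$, $x_3=(1/3,1)$ yields two distinct CLSS solutions according to whether a sampling time hits the instant at which $\|x_1-x_3\|=1$ (Proposition \ref{p-nonCLSS}); nothing of this kind appears in your sketch. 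Moreover, your stated mechanism for almost-everywhere uniqueness --- that the set of initial data whose trajectories ``ever spend positive time on $\Sigma$'' is negligible --- is false as stated: an open set of initial data generates trajectories that reach $\MM$ (e.g.\ three agents with one pair starting at distance slightly larger than $1$ and the other below $1$); what is negligible is the set of data whose trajectories reach the codimension-two set $\widehat{\MM}$ where branching can actually occur. The correct and short route for the variant is the one you only partially invoke: the Filippov solution set is unchanged, the inclusions of classical, Caratheodory, CLSS and stratified solutions into the Filippov set persist for \eqref{e:HK1} (the value of the field on $\MM$ is the interior limit, hence an admissible Filippov selection), and therefore a.e.\ uniqueness, P1) and P2) transfer to all notions, while the P3) counterexamples and the non-uniqueness examples must be re-exhibited for \eqref{e:HK1} as above.
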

This theorem shows that one cannot consider the right-hand side of an ODE as a $L^\infty$ function, since the structure of the solution actually depends on the chosen representative. The proof of Theorem \ref{t-prop2} is given in Section \ref{s-main}.\\

The structure of the article is the following. In Section \ref{sec:BD} we provide notations and definitions, including the various concepts of solution for
discontinuous ODEs. Section \ref{sec:HK} presents the generalized
Hegselmann-Krause model and various general properties,
while Section \ref{sec:HK-R} deals with the linear case in $\R$, already providing various examples of violating uniqueness and properties P1-2-3).
Section \ref{sec:HK-multiR} deals with the multidimensional case, providing other counterexamples. In Section \ref{sec:uniq}, we prove uniqueness
for almost every initial datum, while Section \ref{sec:P2} focuses on the clustering property P2). Finally, Section \ref{s-main} contains
the proofs of the main Theorems.

\section{Notations and definitions}\label{sec:BD}
In this article, we denote by  $\lambda^m$ the Lebesgue measure on $\R^m$. 
For $x\in\R^m$, $B(x,r)$ is the ball of radius $r>0$
centered at $x$ and $B(r)=B(0,r)$ is the ball centered at the origin.
A cone $K\subset \R^m$ is a set with $0\in K$ and such that
$\alpha\cdot K =\{ \alpha x:x\in K\}\subset K$
for every $\alpha>0$. Given an embedded manifold $M\subset\R^m$,
the symbol $\partial M$ denotes the topological boundary.
Given $A\subset \R^m$, we set
$$co(A)=\left\{\sum_{i=1}^\ell \alpha_i x_i : \ell\in\N, \lambda_i\in [0,1], \sum_i\lambda_i=1, x_i\in A \right\}$$
the convex hull of $A$, and denote by $\overline{co}(A)$ its closure.\\
We denote by $AC([0,T],\R^m)$ the space of absolutely continuous functions on a time interval $[0,T]$. Recall that every absolutely continuous function is
differentiable for almost every time, i.e. except for times on a set of zero Lebesgue measure.

We also introduce the following:
\begin{definition}\label{def:strat}
A set $\Gamma\subset\R^m$, $\Gamma=\cup_{i=1}^{m_\Gamma} M_i$, 
with $m_\Gamma\in\N\cup\{+\infty\}$ and $M_i$ being ${\cal C}^1$ embedded manifold of dimension $n_i\leq m$, is stratified if:
\begin{itemize}
\item[i)] The family $M_i$ is locally finite: given a compact $K$, it holds $K\cap M_i\not=\emptyset$ only for finite many $i$.
\item[ii)] for $i\not=j$  it holds $M_i\cap M_j=\emptyset$,  and if $M_i\cap \partial M_j\not=\emptyset$ then $M_i\subset \partial M_j$ and $n_i<n_j$.
\end{itemize}
We call $\max_i n_i$ the dimension of the stratified set $\Gamma$.
\end{definition}
\begin{remark}
For simplicity we used the definition of topological stratification, even
if the examples we consider will admit Whithney or Boltianskii-Brunovsky stratification.
We refer the reader to \cite{marigo_piccoli_2002,PS00,Suss90} for a discussion of the different concepts
and the role played for discontinuous ordinary differential equation and optimal feedback control.
\end{remark}

An autonomous Ordinary Differential Equation (briefly ODE) is written as:
\begin{equation}\label{eq:ODE}
\dot{x}(t)=f(x(t))
\end{equation}
where $x\in\R^m$ and $f:\R^m\to \R^m$ is a measurable and locally bounded
function (defined at every point). The different concepts of solution
will be discussed in the next Section \ref{s-sols}.

A multifunction on $\R^m$ is a function $V:\R^m\to \PP(\R^m)$,
with $\PP(\R^m)$ being the powerset of $\R^m$, i.e. the set of subsets of $\R^m$.
Given a multifunction $V$, one can consider the differential inclusion:
\begin{equation}\label{eq:diff-incl}
\dot{x}(t)\in V(x(t)).
\end{equation}
A solution is an absolutely continuous function $x(\cdot)$ which satisfies
\eqref{eq:diff-incl} for almost every $t$. 

We define the Hausdorff distance $d_H$ on the powerset of $\R^m$ 
as follows: given $x\in\R^m$ and $A,B\subset\R^m$ we set
$d(x,A)=\inf\{d(x,y):y\in A\}$ and 
$d_H(A,B)=\sup \{d(x,A),d(y,B):x\in B,y\in A\}$.
A multifunction $V$ is continuous if it is continuous for the Hausdorff
distance, while $V$ is upper semicontinuous at $x$ if for every $\epsilon>0$
there exists $\delta>0$ such that $V(y)\subset V(x)+B(\eps)$
for every $y$ with $|x-y|<\delta$.\\  
A continuous multifunction $V$ is also upper semicontinuous. It is well known that if $V$ is upper semicontinuous 
with compact convex values, then the corresponding differential inclusion \eqref{eq:diff-incl} admits solutions for
every initial condition, see \cite{AubinCellina}. More precisely, we
have the following:
\begin{prop}
Assume that the multifunction $V$ in \eqref{eq:diff-incl} is upper semicontinuous and, for every $x\in\R^m$, $V(x)$ is a nonempty, compact and convex subset of $\R^m$. Then for every initial condition $x_0$ there exists a solution to \eqref{eq:diff-incl}.\
Moreover, if $V$ satisfies $\sup_{v\in V(x)} |v|\leq C(1+\|x\|)$ for some $C>0$,
then for every $x_0\in\R^m$ and $T>0$, the set of solutions to \eqref{eq:diff-incl} with initial condition $x(0)=x_0$ is
a nonempty, compact, connected subset of $AC([0,T],\R^{m})$.
\end{prop}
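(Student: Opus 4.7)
The plan is to prove the three assertions — existence, compactness, and connectedness of the solution set — by the standard approximation scheme for differential inclusions with upper semicontinuous convex-valued right-hand side, as developed in Aubin--Cellina \cite{AubinCellina}.

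First, for existence, I would construct Euler-type polygonal approximations. Pick $\eps_n\downarrow 0$, partition $[0,T]$ with mesh $\eps_n$, and on each subinterval select $v_k\in V(x_n(t_k))$ to build piecewise affine curves $x_n$. Upper semicontinuity with compact values guarantees that the selections can be taken measurably; combined with the linear growth bound $\sup_{v\in V(x)}|v|\le C(1+\|x\|)$, Gronwall's inequality yields $\|x_n\|_{L^\infty([0,T])}\le R$ uniformly, hence $\|\dot x_n\|_{L^\infty}\le C(1+R)$. Arzel\`a--Ascoli gives a uniform limit $x\in AC([0,T],\R^m)$ along a subsequence. To verify $\dot x(t)\in V(x(t))$ almost everywhere, I would apply the Convergence Theorem for differential inclusions: $\dot x_n \rightharpoonup \dot x$ weakly in $L^1$, upper semicontinuity with convex compact values together with Mazur's lemma implies that for a.e.\ $t$ the value $\dot x(t)$ lies in $\bigcap_{\delta>0}\overline{co}\,V(B(x(t),\delta))=V(x(t))$.

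For the compactness claim, observe that the uniform bounds above apply to \emph{every} solution of \eqref{eq:diff-incl} with initial condition $x_0$: solutions are equi-bounded and equi-Lipschitz with constant $C(1+R)$. Hence the solution set is relatively compact in $C([0,T],\R^m)$ by Arzel\`a--Ascoli, and the same Convergence Theorem argument shows that the uniform limit of any convergent sequence of solutions is again a solution, so the set is closed. Uniform Lipschitz bounds upgrade the $C$-convergence to convergence in $AC([0,T],\R^m)$ (uniform convergence of functions plus weak $L^1$ convergence of derivatives, extracted by Banach--Alaoglu).

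For connectedness, I would use the Kneser--Hukuhara theorem: if the solution set were the disjoint union of two nonempty compact pieces $\Sigma_1,\Sigma_2$, one could approximate the inclusion by a sequence of mollified inclusions with continuous single-valued (or Lipschitz) selections whose solution sets $\Sigma^n$ are connected (being continuous images of a connected parameter space of initial data/selections). Since $\Sigma=\bigcap_n\overline{\Sigma^n}$ as a decreasing intersection of compact connected sets in the metric space $C([0,T],\R^m)$, the intersection is connected, a contradiction. The main obstacle is this last step: existence and compactness follow essentially from upper semicontinuity, convexity and linear growth via routine weak-$L^1$ arguments, but the connectedness assertion genuinely requires the convex-valuedness of $V$ together with a careful construction of connected families of approximate solutions, which is the delicate technical point.
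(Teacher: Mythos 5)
The paper does not actually prove this proposition: it is recalled as a classical fact about upper semicontinuous convex-valued differential inclusions and attributed to \cite{AubinCellina}, so there is no internal proof to compare against. Your sketch is essentially the standard argument from that reference (Euler polygonal approximations, Arzel\`a--Ascoli, the Convergence Theorem via Mazur's lemma for existence; equi-Lipschitz bounds plus closedness under uniform limits for compactness; a Kneser--Hukuhara-type approximation argument for connectedness), and it is correct in outline. Two small caveats. First, the concluding ``upgrade'' to convergence in $AC([0,T],\R^m)$ is not right if read as convergence in the $W^{1,1}$ norm: uniform convergence together with weak $L^1$ convergence of derivatives is not norm convergence, and the solution set of, say, $\dot x\in[-1,1]$, $x(0)=0$, is not norm-compact in $AC$. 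Compactness and connectedness here are meant, as in the cited source, for the topology of uniform convergence on the solution set regarded as a subset of $AC([0,T],\R^m)$; with that reading your Arzel\`a--Ascoli plus closedness argument already suffices and the upgrade step should simply be dropped. Second, the first existence assertion does not assume the linear growth bound: upper semicontinuity with compact values gives local boundedness, hence local existence directly, and your Gronwall estimate is only needed for the global statement and the structure of the solution set.
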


\subsection{Solutions to discontinuous ordinary differential equations} \label{s-sols}
Given the ODE \eqref{eq:ODE} with $f$ discontinuous, it is convenient to define the associated Filippov multifunction as:
\begin{equation}\label{eq:Fil-multi}
F(x)=\bigcap_{\delta>0}\bigcap_{\lambda^m(N)=0}
\overline{co} \{f(y):y\in (x+B_\delta\setminus N)\}.
\end{equation}
We have the following proposition, see \cite{AubinCellina}.
\begin{prop}
Consider an ODE \eqref{eq:ODE} with $f$ measurable and locally bounded.
Then the corresponding Filippov multifunction $F$ defined by \eqref{eq:Fil-multi} is upper semicontinuous with nonempty,
compact and convex values, thus the differential inclusion $\dot{x}\in F(x)$
admits solutions for every initial condition. 
\end{prop}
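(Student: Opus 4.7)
The plan is to verify that $F$ takes nonempty, compact, convex values and is upper semicontinuous; the existence statement then follows immediately from the preceding proposition applied to $\dot x \in F(x)$. For brevity, write $A_\delta^N(x):=\overline{co}\{f(y):y\in(x+B_\delta)\setminus N\}$, so that $F(x)=\bigcap_{\delta>0,\,\lambda^m(N)=0}A_\delta^N(x)$. The key structural observation is that this family is directed \emph{with respect to reverse inclusion}: shrinking $\delta$ or enlarging $N$ can only shrink the underlying point set, so $A_{\min(\delta_1,\delta_2)}^{N_1\cup N_2}(x)\subset A_{\delta_1}^{N_1}(x)\cap A_{\delta_2}^{N_2}(x)$, meaning that any two elements of the family admit a common refinement in the family itself.

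For the first three properties I would argue as follows. Each $A_\delta^N(x)$ is convex by construction and nonempty since $(x+B_\delta)\setminus N$ has positive Lebesgue measure. Local boundedness of $f$ near $x$ makes $A_\delta^N(x)$ bounded, and hence compact, once $\delta$ is smaller than some $\delta_0>0$. Every such $A_\delta^N(x)$ is then a closed subset of the compact set $A_{\delta_0}^\emptyset(x)$, so the finite intersection property, combined with the directedness above, yields $F(x)\neq\emptyset$. Compactness and convexity of $F(x)$ are then immediate as $F(x)$ is an intersection of compact convex sets.

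The central step is upper semicontinuity at an arbitrary $x$. Fix $\eps>0$ and consider the open neighborhood $U=F(x)+B(\eps)$ of $F(x)$. Since $\bigcap_{\delta,N}A_\delta^N(x)=F(x)\subset U$, the collection $\{A_\delta^N(x)\setminus U\}$ of closed subsets of the compact set $A_{\delta_0}^\emptyset(x)$ has empty intersection. By the finite intersection property some finite subfamily already has empty intersection, and by the directedness property this collapses to a single pair $(\delta_1,N_1)$ with $A_{\delta_1}^{N_1}(x)\subset F(x)+B(\eps)$, obtained by taking $\delta_1=\min\delta_i$ and $N_1=\bigcup N_i$ over that finite subfamily. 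Now whenever $\|y-x\|<\delta_1/2$, the inclusion $(y+B_{\delta_1/2})\setminus N_1\subset(x+B_{\delta_1})\setminus N_1$ yields $A_{\delta_1/2}^{N_1}(y)\subset A_{\delta_1}^{N_1}(x)$, and since $F(y)\subset A_{\delta_1/2}^{N_1}(y)$ we conclude $F(y)\subset F(x)+B(\eps)$, which is the required upper semicontinuity.

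The subtlest point -- and the main potential obstacle -- is the passage from an intersection indexed by an uncountable family to a single approximating set $A_{\delta_1}^{N_1}(x)$ contained in a prescribed neighborhood of $F(x)$. Directedness under reverse inclusion is precisely what makes this reduction work: without it the finite intersection property would only produce a finite subcollection, and one would still need a mechanism to merge the indices $(\delta_i,N_i)$ into a single one. Once this is in place, every remaining piece is standard, and the existence of solutions to $\dot x\in F(x)$ for every initial condition is a direct application of the previous proposition.
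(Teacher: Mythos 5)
Your proof is correct. Note, though, that the paper does not prove this proposition at all: it is quoted as a standard result of the theory of differential inclusions, with a pointer to Aubin--Cellina, and the only ``proof'' in the text is that citation together with the preceding proposition on upper semicontinuous inclusions with compact convex values. What you have written is therefore a self-contained verification of the quoted fact, and it is essentially the classical argument from the literature: the decisive observation is exactly the one you isolate, namely that the family $A_\delta^N(x)$ is directed under reverse inclusion (shrink $\delta$, enlarge $N$ within the null sets), so that compactness plus the finite intersection property gives both nonemptiness of $F(x)$ and, for any $\eps$-neighborhood of $F(x)$, a single index $(\delta_1,N_1)$ with $A_{\delta_1}^{N_1}(x)\subset F(x)+B(\eps)$; the monotonicity $A_{\delta_1/2}^{N_1}(y)\subset A_{\delta_1}^{N_1}(x)$ for $\|y-x\|<\delta_1/2$ then yields upper semicontinuity in precisely the sense defined in the paper, and existence follows from the previous proposition, as intended. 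One cosmetic remark: when you invoke the finite intersection property you should restrict to indices $\delta\leq\delta_0$ (or note that local boundedness of $f$ on the closure of $x+B_\delta$ makes every $A_\delta^N(x)$ compact), since only those sets are visibly contained in the compact set $A_{\delta_0}^{\emptyset}(x)$; this is immediate because the sets with $\delta>\delta_0$ are redundant in the intersection defining $F(x)$, so it is a presentational point rather than a gap.
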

Similarly, the Krasovskii multifunction,
associated to \eqref{eq:ODE},
is defined as:
\begin{equation}\label{eq:Kras-multi}
K(x)=\bigcap_{\delta>0}
\overline{co} \{f(y):y\in (x+B_\delta)\},
\end{equation}
and it shares the same regularity as the Filippov one; thus, solutions exist to the corresponding differential inclusion for every initial condition.

\begin{remark}
We will mostly consider examples of ODEs for which Filippov and Krasovsky multifuction coincide, so we will mainly focus on the Filippov definition. However, the set of solutions may differ significantly in the general case, as shown by Example \ref{ex:1} below.
\end{remark}

To define a third concept of solution, we introduce the following:
\begin{definition} \label{d-strat}
A stratification $S$ for the ODE \eqref{eq:ODE}
is a quadruplet $(\Gamma,N_1,N_2,\Sigma)$
with $\Gamma=\R^m$ stratified, $N_1\cup N_2=\{1,\ldots,m_\Gamma\}$,
$N_1\cap N_2=\emptyset$ and $\Sigma:N_2\to N_1$ such that 
the following holds:
\begin{itemize}
\item the manifolds $M_i$, $i\in N_1$, are called
type I cells and the manifolds $M_j$, $j\in N_2$, are called type II cells.
\item if $M_i$ is of type I, then $f(x)\in T_x M_i$ for every $x\in M_i$ and $f$ restricted to $M_i$ is smooth.
\item if $M_j$ is of type II, then  for every $x\in M_j$ there exist $\epsilon>0$ and a unique absolutely continuous curve  $\xi_x:[0,\epsilon[\to \R^m$ with  $\xi_x(0)=x$, $\xi_x(t)\in M_{\Sigma(j)}$ for $t\in ]0,\epsilon[$ and
$\dot{\xi}_x(t)=f(\xi_x(t))$ for every $t\in ]0,\epsilon[$.
\end{itemize}
\end{definition}

Many definitions of solutions for \eqref{eq:ODE} are then available, most of which coincide when $f$ is sufficiently regular (e.g. locally Lipschitz). 
We summarize in the following definition the concepts we are considering in the rest of the paper.
\begin{definition}\label{def:ODE-sol}
Given the ODE \eqref{eq:ODE} and $T>0$ we define the following:
\begin{enumerate}
\item A {\bf classical solution} is a function $x:[0,T]\to \R^m$, which is differentiable
and satisfies \eqref{eq:ODE} at every time $t\in [0,T]$
(with one-sided derivatives at $0$ and $T$).
\item A {\bf Caratheodory solution} is an absolutely continuous function $x:[0,T]\to \R^m$  which satisfies \eqref{eq:ODE} at almost every time $t\in [0,T]$.
\item A {\bf Filippov solution} is an absolutely continuous function $x:[0,T]\to \R^m$, which satisfies: 
\[
\dot{x}\in F(x(t))
\]
for almost every time $t\in [0,T]$, with $F$ given by \eqref{eq:Fil-multi}.
\item A {\bf Krasovskii solution} is  is an absolutely continuous function $x:[0,T]\to \R^m$, which satisfies: 
\[
\dot{x}\in K(x(t))
\]
for almost every time $t\in [0,T]$, with $K$ given by \eqref{eq:Kras-multi}.
\item A {\bf limit of sample-and-hold solution or Clarke-Ledyaev-Sontag-Subbotin (briefly CLSS) solution} is a continuous function $x:[0,T]\to \R^m$, 
which is uniform limit of continuous and
piecewise smooth functions $x_\nu$, $\nu\in\N$, 
for which there exist $0=t^{0}_{\nu}<t^{1}_{\nu}<\cdots <t^{m_\nu}_{\nu}=T$
such that $\dot{x}_\nu(t)=f(x_\nu (t^{j}_{\nu}))$ for $t\in [t^{j}_{\nu},t^{j+1}_{\nu}[$,
$j=0,\ldots, m_\nu-1$,
and $\max_{j} (t^{j+1}_{\nu}-t^{j}_{\nu})\to 0$ as $\nu\to\infty$.
\item If $S=(\Gamma,N_1,N_2,\Sigma)$ is a stratification for $f$, then {\bf a stratified solution} generated by $S$
is a continuous and piecewise smooth function $x:[0,T]\to \R^m$ for which there exist $0=t_0 < t_1< t_2<\cdots <t_\ell=T$
and $i_1,\ldots,i_{\ell}\in \{1,\ldots , m_{\Gamma}\}$ such that the following holds for $k=0,\ldots,\ell-1$: if $i_k\in N_1$ then $x(\cdot)$ is a classical solution on $[t_k,t_{k+1}[$ contained in $M_{i_k}$, while if $i_k\in N_2$ then $x(t_k)\in M_{i_k}$ and $x(\cdot)$  is a classical solution 
on $]t_k,t_{k+1}[$ contained in $M_{\Sigma(i_k)}$.
\item A solution $x:[0,T]\to \R^m$ (in one of the previous senses) is said robust if there exists a neighborhood $N$ of $x(0)$ and, for every $y\in N$,
a solution $x_y$ with $x_y(0)=y$ such that the following holds: for each $y_\nu\in N$, with $y_\nu\to x(0)$ as $\nu\to +\infty$,
$x_{y_\nu}$ converges to $x$ uniformly on $[0,T]$.
\item A solution $x:[0,T]\to \R^m$ (in one of the previous senses) is said 
cone-robust if there exists a cone $K$ with nonempty interior, 
a neighborhood $N$ of $x(0)$ and,  for every $y\in ((x+K)\cap N)$,
a solution $x_y$ with $x_y(0)=y$ such that the following holds: for each $y_\nu\in (x+K)\cap N$, with $y_\nu\to x(0)$ as $\nu\to +\infty$,
$x_{y_\nu}$ converges to $x$ uniformly on $[0,T]$.
\end{enumerate}
\end{definition}

\begin{remark} \label{rem-cara}
The concept of classical solution is not used for discontinuous ODEs, because of general lack of existence. Instead, Caratheodory solutions are the one commonly used, as they are equivalent to solutions in the integral form:
\[
x(t)=x(0)+\int_0^t f(x(s))\,ds.
\]
The concepts of Filippov and Krasovskii solutions are commonly used
to deal with general discontinuous ordinary differential equations. They have the advantage of being based on the well-developed
theory of differential inclusions, see \cite{AubinCellina,Filippov}.\\
CLSS solutions have been introduced to provide a suitable concept
for discontinuous stabilizing feedbacks \cite{CLSS}. Notice that the sample-and-hold approximations are indeed numerical solutions provided by the explicit Euler scheme. Thus CLSS solutions represent solutions which may be generated by a numerical scheme in the theoretical limit.\\
The concept of stratification and stratified solution is particularly convenient
in optimal control theory, especially to build optimal synthesis,
see \cite{PS00}.
The concept of robust and cone-robust are useful to isolate
solutions in the same context \cite{marigo_piccoli_2002}.
\end{remark}

\begin{figure}[h]
\begin{center}
\includegraphics[scale=.3]{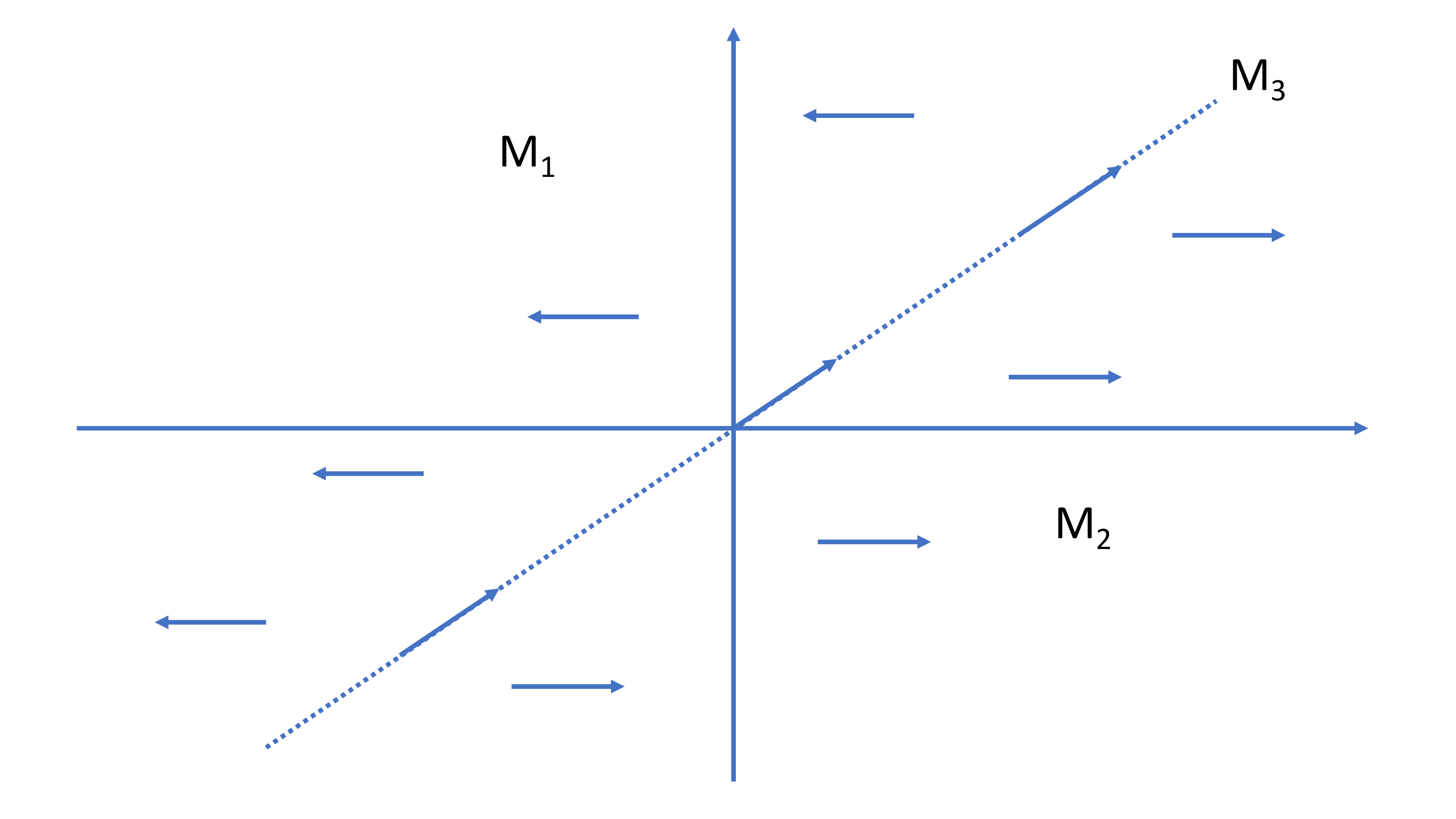}
\caption{Graphical representation of $f$ and stratifications
for Example \ref{ex:1}.}
\label{fig:example1}
\end{center}
\end{figure}

The different concepts give rise to very different sets of solutions,
as illustrated by next Example.
\begin{example}\label{ex:1}
Consider the ODE \eqref{eq:ODE} on $\R^2$ with initial condition 
$x(0)=(x_1(0),x_2(0))$ and $f$ given by:
\begin{equation}\label{eq:example-R2}
f(x)=\left\{
\begin{array}{ll}
(-1,0) & x_2>x_1\\
(1,1) & x_2=x_1\\
(1,0) & x_2<x_1
\end{array}
\right.
\end{equation}
See also Figure \ref{fig:example1} for a graphical representation of $f$.
Clearly for $x_2(0)\not= x_1(0)$ 
there exists a unique solution given by $x_2(t)=x_2(0)$
and $x_1(t)=x_1(0)\pm t$ if $x_1(0) \gtrless x_2(0)$ (for all concepts).
Therefore, we focus on the initial condition for which $x_1(0)=x_2(0)$ and
without loss of generality we assume $x_1(0)=x_2(0)=0$.
We notice that $F((0,0))=\{(\alpha,0):\alpha\in [-1,1]\}$ for 
$F$ defined by \eqref{eq:Fil-multi} and 
$K(0,0)$ is the convex hull of the three points $(-1,0)$, $(1,1)$
and $(1,0)$ for $K$ defined by \eqref{eq:Kras-multi}.
Then, the set of solutions is as follows:
\begin{itemize}
\item There exists a unique classical solution given by
$x(t)=(t,t)$.
\item There exist two one-parameter families of Caratheodory solutions: 
for fixed $\bar{t}\in [0,+\infty]$ consider the continuous function $x^\pm$ such that
$x^\pm(t)=(t,t)$ on $[0,\bar{t}[$
and $x^\pm(t)=(\bar{t}\pm (t-\bar{t}),\bar{t})$ on $]\bar{t},+\infty[$.
\item
The set of Filippov solutions is given by 
two one-parameter families:
for fixed $\bar{t}\in [0,+\infty]$, consider the continuous function $x^\pm$ such that
$x^\pm(t)=(0,0)$ on $[0,\bar{t}[$
and $x^\pm(t)=(\pm (t-\bar{t}),0)$ on $]\bar{t},+\infty[$.
\item The set of Krasovskii solutions includes Caratheodory and Filippov solutions, and is given by the following infinite dimensional family. 
Given $\bar{t}\in [0,+\infty]$ and a Lipschitz continuous function
$\varphi:[0,+\infty[\to\R$ with $0\leq \varphi'(t)\leq 1$ for almost every $t$,
define the continuous function $x^{\bar{t},\pm}_\varphi$ such that
$x^{\bar{t},\pm}_\varphi(t)=(\varphi(t),\varphi(t))$ on $[0,\bar{t}[$ and
$x^{\bar{t},\pm}_\varphi(t)=(\varphi(\bar{t})\pm (t-\bar{t}),\varphi(\bar{t}))$ on $[\bar{t},+\infty[$. 
\item The only CLSS solution coincides with the classical one.
\item There exists three possible stratifications: $S_i$, $i=1,2,3$ defined as follows. First set $M_1=\{(x_1,x_2):x_2>x_1\}$, $M_2=\{(x_1,x_2):x_2<x_1\}$,
$M_3=\{(x_1,x_2):x_2=x_1\}$, $\Gamma=\R^2=\cup_i M_i$ and $m_\Gamma=3$.\\
The first stratification is $S_1=\{\Gamma,\{1,2\},\{3\}, \Sigma_1\}$, with $\Sigma_1(3)=1$.
The only stratified solution for $S_1$ is $x(t)=(-t,0)$.\\
The second is $S_2=\{\Gamma,\{1,2\},\{3\}, \Sigma_2\}$, with $\Sigma_2(3)=2$.
The only stratified solution for $S_2$ is $x(t)=(t,0)$.\\
Finally, the third is $S_3=\{\Gamma,\{1,2,3\},\emptyset,\emptyset\}$
and the only stratified solution for $S_3$ is $x(t)=(t,t)$.
\item No solution is robust and the only cone robust are
the stratified solutions for $S_1$ and $S_2$.
\end{itemize} 
\end{example}

\section{The Hegselmann-Krause model}\label{sec:HK}
One of the most known examples of \emph{social dynamics} is the
celebrated Hegselmann-Krause (briefly HK) model:
\begin{equation}\label{eq:HK}
\dot{x}_i=\sum_{j=1}^N a_{ij}(\|x_i-x_j\|) (x_j-x_i) \mbox{~~~~~with~~} a_{ij}(r)=\begin{cases}
\phi_{ij}(r) &\mbox{~~ if~} r\in[0,1)\\
0 &\mbox{~~ if~} r\in[1,+\infty).
\end{cases}
\end{equation}
where $x_i\in\R^n$, $i=1,\ldots,N$,
$\phi_{ij}:[0,1]\to\R^+$ are Lipschitz continuous, and $\phi_{ij}=\phi_{ji}$.
Each $x_i$ represents the (possibly multidimensional) opinion of the $i$-th agent.
To be precise, the original model was formulated in discrete-time with $\phi_{ij}\equiv 1$, see \cite{HK}. Obviously, existence and uniqueness for the discrete time version is granted, while \eqref{eq:HK} is the natural extension to the continuous-time case with $\phi_{ij} $ arbitrary.
We will provide various examples of lack
of uniqueness and some positive results.
Many examples will be provided for the special case $\phi_{ij}\equiv 1$,
i.e. with linear dynamics for interacting agents, while results
will be given for the general case. 

\subsection{Relationships between concepts of solution}

In this section, we will prove first results about the connection between different kinds of solutions. In particular, we will prove the following result.
\begin{prop} \label{p-inclusion}
The set of Filippov solutions to \eqref{eq:HK} coincides with the set of Krasovskii solutions and contains the set of Caratheodory solutions.
The set of Caratheodory solutions includes classical, CLSS and stratified solutions.
\end{prop}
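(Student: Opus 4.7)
The plan is to decompose Proposition \ref{p-inclusion} into three claims and prove each separately: (i) the Filippov and Krasovskii solution sets for \eqref{eq:HK} coincide, (ii) every Caratheodory solution is a Filippov solution, and (iii) every classical, stratified, and CLSS solution is a Caratheodory solution. Claims (i) and (ii) both follow from a single pointwise identity at the level of set-valued maps; the delicate step is the CLSS inclusion in (iii).

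For (i) I would prove the pointwise identity $K(y)=F(y)$, which immediately passes to the solution sets since the corresponding differential inclusions coincide. The discontinuity locus of the HK right-hand side is $D=\bigcup_{i<j}\{x\in\R^{nN}:\|x_i-x_j\|=1\}$, a finite union of smooth codimension-one hypersurfaces, hence $\lambda^{nN}(D)=0$. For any $y\in\R^{nN}$ and any Lebesgue-null $N$, I claim that every value $f(z_0)$ with $z_0\in B(y,\delta)$ is a limit of values $f(z)$ with $z\in B(y,\delta)\setminus N$. If $z_0\notin D$ this follows from continuity of $f$ at $z_0$; if $z_0\in D$, the convention $a_{ij}(1)=0$ in \eqref{eq:HK} ensures that $f(z_0)$ equals the limit of $f(z)$ as $z\to z_0$ along the open cone where $\|z_i-z_j\|>1$ for every pair $(i,j)$ with $\|z_{0,i}-z_{0,j}\|=1$ and every other pair is kept on its original side of distance one. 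This cone has positive Lebesgue measure near $z_0$, so it meets $B(y,\delta)\setminus N$ arbitrarily close to $z_0$. Passing to closed convex hulls yields $K(y)\subseteq F(y)$, and the reverse inclusion is automatic from the definitions.

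Claim (ii) is then a one-line observation: a Caratheodory solution satisfies $\dot x(t)=f(x(t))$ at almost every $t$, and since $f(y)\in F(y)$ pointwise (the case $z_0=y$ of the argument above), one has $\dot x\in F(x)$ a.e. Among the inclusions of (iii), classical $\subseteq$ Caratheodory is immediate from the definitions; for stratified solutions, Definition \ref{def:ODE-sol}(6) prescribes that on each open interval $(t_k,t_{k+1})$ the trajectory is a classical solution of $\dot x=f(x)$ inside a single type I cell, so $\dot x=f(x)$ holds pointwise off the finite junction set $\{t_0,\dots,t_\ell\}$, which is Lebesgue-null.

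The main obstacle is CLSS $\subseteq$ Caratheodory. Let $x$ be a CLSS solution, uniform limit of sample-and-hold approximants $x_\nu$ with mesh $h_\nu\to 0$. Since HK preserves the convex hull of the initial positions and each $\phi_{ij}$ is bounded on $[0,1]$, the $x_\nu$ are equi-Lipschitz, so $x$ is Lipschitz and differentiable a.e. On the set $G=\{t:x(t)\notin D\}$, $f$ is continuous at $x(t)$ and $x_\nu(\tau_\nu(t))\to x(t)$ for the nearest sample time $\tau_\nu(t)$; a dominated-convergence passage to the limit in $x_\nu(t+h)-x_\nu(t)=\int_t^{t+h}f(x_\nu(\tau_\nu(s)))\,ds$ gives $\dot x(t)=f(x(t))$ at every Lebesgue point of $G$. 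The technical heart is the complementary set $S=\{t:x(t)\in D\}$: after a countable partition into the slices $S_{ij}=\{t\in S:\|x_i(t)-x_j(t)\|=1\}$, absolute continuity of $t\mapsto\|x_i(t)-x_j(t)\|^2$ yields the tangency $\langle x_i-x_j,\dot x_i-\dot x_j\rangle=0$ a.e.\ on $S_{ij}$, while the convention $a_{ij}(1)=0$ makes the $(i,j)$-contribution to $f(x(t))$ vanish on $S_{ij}$. Combining the two should force $\dot x(t)=f(x(t))$ a.e.\ on $S$ after a case analysis on how many pairs are simultaneously in contact; making this argument rigorous, and in particular passing to the limit inside the convex hull hidden in the definition of $F$, is the step I expect to be the most delicate.
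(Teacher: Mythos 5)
The parts of your proposal covering $K=F$, Caratheodory $\subseteq$ Filippov, and classical/stratified $\subseteq$ Caratheodory are correct and essentially identical to the paper's route: Proposition \ref{prop:FK-HK} proves the pointwise identity $K(x)=F(x)$ by approaching a contact point through the open region where every contact pair is pushed to distance strictly greater than one (your ``cone'' argument), Proposition \ref{prop:HK-Car} uses $f(x)\in F(x)$ pointwise, and the stratified/classical inclusions are read off the definitions exactly as you do. Your treatment of CLSS solutions off the contact set (on the open set $G$ where $f$ is continuous along the trajectory) is also fine.

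The genuine gap is the CLSS $\subseteq$ Caratheodory step on the contact set $S$, which you leave open and whose proposed completion would fail. The two facts you want to combine --- the tangency $\langle x_i-x_j,\dot x_i-\dot x_j\rangle=0$ a.e.\ on $S_{ij}$ and the vanishing of the $(i,j)$-term of $f(x(t))$ there because $a_{ij}(1)=0$ --- cannot force $\dot x(t)=f(x(t))$: tangency is a single scalar relation, and it is satisfied precisely by the sliding selections of the Filippov set \eqref{e:HKF}, where the coefficients $\alpha$ are pinned by the action of the \emph{other} agents to a generically nonzero value rather than to $0$. A concrete counterexample to ``tangency plus $a_{ij}(1)=0$ implies Caratheodory'' is the three-agent sliding solution \eqref{eq:Fil-sol-3ag-R} of Proposition \ref{p-Fsol}: it stays on $\|x_1-x_2\|=1$ for all times, satisfies the tangency identity, and the $(1,2)$-term of $f$ vanishes along it, yet $\dot x_1(t)>0$ while $f_1(x(t))=0$, so it is a Filippov solution that is not Caratheodory. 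Hence no argument that only uses $\dot x\in F(x)$ together with tangency can succeed; the missing idea is to exploit the defining sample-and-hold structure itself --- along each approximant the pair $(i,j)$ either interacts with full strength or not at all on every sampling interval, and this is what must exclude the fractional sliding modes. This is also how the paper argues: it treats the approximants as explicit Euler schemes and, using the uniform Lipschitz bound coming from the bounded right-hand side, passes to the limit to conclude that the limit curve satisfies the equation almost everywhere, rather than trying to identify $\dot x$ inside the convexified set as you propose.
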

The system \eqref{eq:HK} can be written in standard from 
\eqref{eq:ODE} by setting $m=nN$,
$x=(x_1,\ldots,x_N)\in\R^{nN}$,
$f=(f_1,\ldots,f_N)$ with $f_i:\R^n\to\R^n$ given
by \eqref{eq:HK}.
To prove some general properties of the system \eqref{eq:HK},
we first need to provide some definition.
\begin{definition}\label{def:M-disc}
Given $i,j\in\{1,\ldots,N\}$, $i\not= j$, we define
the subset of $\R^{nN}$:
\begin{equation}
\MM_{ij}=\{(x_1,\ldots,x_N): \|x_i-x_j\|=1\},\quad
\end{equation}
and the union of such subsets as:
\begin{equation} 
\MM=\cup_{i,j:i\not= j}\MM_{ij}.
\end{equation}
For $x\in\MM$ we let $J(x)=\{J_1,\ldots,J_{\ell(x)}\}$ 
be the unique partition of  $\{1,\ldots,N\}$ 
(i.e. $J_k\subset \{1,\ldots,N\}$ are disjoint
and $\cup_k J_k=\{1,\ldots,N\}$)
such that both $j_1\in J_k$ and $j_2\in J_k$ for some $k$
if and only if $x_{j_1}=x_{j_2}$.
\end{definition}
We have:
\begin{prop}\label{prop:M}
The map $f=(f_1,\ldots,f_N)$, with $f_i:\R^n\to\R^n$ given
by \eqref{eq:HK}, is locally Lipschitz continuous
at every  $x\in\R^{nN}\setminus \MM$.
Moreover, the set $\MM$ is stratified.
\end{prop}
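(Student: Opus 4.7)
The plan is to treat the two assertions in Proposition~\ref{prop:M} separately. For local Lipschitz continuity on $\R^{nN}\setminus\MM$, I fix $\bar x\notin\MM$, so that every pairwise distance $\|\bar x_i-\bar x_j\|$ is either strictly less than or strictly greater than $1$. By continuity of the distance maps, there is an open neighborhood $U$ of $\bar x$ on which the set of close pairs $\{(i,j):\|x_i-x_j\|<1\}$ is locally constant. On $U$, each coefficient $a_{ij}$ is either identically $0$, or the composition of the Lipschitz function $\phi_{ij}$ with the Lipschitz map $x\mapsto\|x_i-x_j\|$. Since $\phi_{ij}$ is bounded on $[0,1]$, these compositions are locally Lipschitz and bounded on $U$; multiplying by the smooth linear factor $x_j-x_i$ and summing over finitely many pairs yields local Lipschitz continuity of each $f_i$ on $U$.

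For the stratification of $\MM$, I first observe that each $\MM_{ij}$ is a smooth embedded submanifold of $\R^{nN}$ of codimension one: on $\MM_{ij}$ one has $x_i\neq x_j$, so the smooth function $g_{ij}(x)=\|x_i-x_j\|^2$ has nonzero gradient there, making $\MM_{ij}=g_{ij}^{-1}(1)$ a regular level set. To build the strata, I would partition $\MM$ according to which pairs sit at distance exactly one: for each non-empty $S\subset\{(i,j):1\leq i<j\leq N\}$ set $\widetilde{M}_S=\{x:\|x_i-x_j\|=1\text{ if and only if }(i,j)\in S\}$. These finitely many sets are pairwise disjoint and cover $\MM$.

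The main obstacle is that $\widetilde{M}_S$ need not be a smooth manifold of pure codimension $|S|$: the differentials $dg_{ij}$ for $(i,j)\in S$ can be linearly dependent on parts of $\widetilde{M}_S$, a phenomenon familiar from the rigidity theory of bar-joint frameworks. I would resolve this by further subdividing each $\widetilde{M}_S$ according to the rank of the differential of the map $(g_{ij})_{(i,j)\in S}$; since this rank is integer-valued and lower semicontinuous, the subdivision is finite, and the constant-rank theorem makes each resulting piece a $C^1$ embedded submanifold. Local finiteness is immediate because the collection is finite, and the frontier condition of Definition~\ref{def:strat} follows by construction: the closure of a stratum indexed by $(S,r)$ meets only strata with either a strictly larger $S$ or the same $S$ with strictly smaller rank, both of which are of strictly smaller dimension. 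Alternatively, one may simply invoke the classical fact that $\MM$, being the real-algebraic zero set of $\prod_{i<j}(\|x_i-x_j\|^2-1)$, admits a Whitney stratification compatible with Definition~\ref{def:strat}.
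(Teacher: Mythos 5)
Your first paragraph (local Lipschitz continuity away from $\MM$) is correct and simply makes explicit what the paper dismisses in one line: near a point of $\R^{nN}\setminus\MM$ the set of pairs with $\|x_i-x_j\|<1$ is locally constant, so each $f_i$ is a finite sum of products of bounded Lipschitz functions.

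For the stratification of $\MM$ you take a genuinely different route from the paper. The paper does not decompose $\MM$ by the active set $S$ of pairs at distance one at all: it uses the coincidence partition $J(x)$ of Definition \ref{def:M-disc} (which agents occupy the same position), takes as strata the sets $M_J=\{x:J(x)=J\}$, gets local finiteness from the finiteness of partitions of $\{1,\ldots,N\}$, and the frontier condition from the refinement order on partitions. Your decomposition by active pairs, refined by the rank of $d(g_{ij})_{(i,j)\in S}$, has a genuine gap as written. The constant rank theorem needs the rank to be constant on an \emph{open} neighborhood in $\R^{nN}$; the exact-rank locus is only locally closed (rank is lower semicontinuous), and constancy of the rank on the piece itself does not make the piece a $C^1$ manifold --- for instance $g(x,y)=(y^2-x^2)^2$ has $dg$ of rank $0$ on its entire zero set, which is a pair of crossing lines. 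Likewise the frontier condition of Definition \ref{def:strat} does not ``follow by construction'': the closure of a rank-refined piece may contain only part of a lower piece (Whitney-umbrella behaviour), so a further refinement would be required, and ``same $S$ with smaller rank'' does not by itself force strictly smaller dimension. Your fallback sentence, however, does close the argument: $\MM$ is a real algebraic set, hence admits a locally finite Whitney stratification with connected strata, which satisfies the disjointness, frontier and dimension requirements of Definition \ref{def:strat}; this is a legitimate (if heavy) citation, in line with the paper's own remark after Definition \ref{def:strat}. So your conclusion stands via that appeal, but the explicit rank-based construction preceding it would need substantial repair, and it is in any case quite different from the coincidence-partition strata the paper actually uses.
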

\begin{proof}
The locally Lipschitz continuity of $f$ outside $\MM$ follows directly from the definition of $f_i$.\\
The set $\MM$ is stratified by defining the strata
as follows. Given any partition $J=\{J_1,\ldots,J_{\ell}\}$ 
of  $\{1,\ldots,N\}$, we set 
$M_J=\{x:J(x)=J\}$. Notice that $dim(M_J)=\ell$.
Property i) of Definition
\ref{def:strat} follows from the finiteness of partitions
of  $\{1,\ldots,N\}$. For property ii), 
write $J^1\prec J^2$ if the partition $J^1$ is a strict refinement of $J^2$. Then it is easy to check that $J^1\prec J^2$
if and only if $M_{J_1}\subset\partial M_{J_2}$ and,
in this case, $dim(M_{J_1})<dim(M_{J_2})$.
\end{proof}

\begin{prop}\label{prop:FK-HK}
Let $F$ be the Filippov multifunction defined as in \eqref{eq:Fil-multi}
for $f=(f_1,\ldots,f_N)$, with $f_i$ given by the right hand side of \eqref{eq:HK}. It holds $F(x_1,\ldots,x_N)=(F_1,F_2,\ldots,F_N),$ where
\begin{equation}\label{e:HKF}
F_i=\left\{\sum_{j\neq i : \|x_i-x_j\|=1} \alpha_j \phi_{ij}(1) (x_j-x_i)\ :\ \alpha_j\in[0,1]\right\}+ \sum_{j\neq i : \|x_i-x_j\|<1} \phi_{ij}(\|x_i-x_j\|) (x_j-x_i).\end{equation}
There exists $C>0$ such that  $\sup_{v\in F(x)} |v|\leq C(1+\|x\|)$, thus for every $x_0\in\R^{nN}$ and $T>0$, the set of Filippov solutions to \eqref{eq:HK} with initial condition $x(0)=x_0$ is a nonempty, compact, connected subset of $AC([0,T],\R^{m})$.\\
Moreover, the Krasovskii multifunction $K$ defined as in \eqref{eq:Kras-multi} coincides
with that defined by \eqref{eq:Fil-multi}, thus the set of Krasovskii solutions
coincide with the set of Fillippov solutions.\\
Finally, the property P1) holds for Filippov and Krasovskii solutions.
\end{prop}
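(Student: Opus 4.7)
The plan is to handle the four claims of the proposition in sequence: the explicit formula \eqref{e:HKF}, the equality $K=F$, the growth bound together with existence/compactness, and barycenter invariance P1).

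For \eqref{e:HKF}, I would fix $x\in\R^{nN}$ and partition the ordered pairs $(i,j)$, $i\ne j$, into \emph{interior} pairs with $\|x_i-x_j\|<1$, \emph{exterior} pairs with $\|x_i-x_j\|>1$, and \emph{boundary} pairs $S=\{(i,j):\|x_i-x_j\|=1\}$. For $\delta$ small, only the boundary pairs see a discontinuity on $x+B_\delta$: interior pairs contribute the continuous drift $\phi_{ij}(\|x_i-x_j\|)(x_j-x_i)$ and exterior pairs contribute $0$. By Proposition~\ref{prop:M}, the boundary hypersurfaces $\MM_{ij}$ locally partition $\R^{nN}\setminus\MM$ into open regions labelled by the subsets $T\subseteq S$ of active pairs. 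Picking sequences $y^T_\nu\to x$ inside each non-empty region yields limit points $e_T$ of $f$, and by Bernoulli-type weights $\lambda_T=\prod_{(i,j)\in T}\alpha_{\{i,j\}}\prod_{(i,j)\notin T}(1-\alpha_{\{i,j\}})$ one realises every choice of $\alpha_{\{i,j\}}\in[0,1]$, one parameter per \emph{unordered} boundary pair, as a convex combination. Reading the $\alpha_j$'s in \eqref{e:HKF} with the convention $\alpha_{\{i,j\}}=\alpha_{\{j,i\}}$ shared between $F_i$ and $F_j$, this produces exactly the claimed formula.

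The identity $K=F$ follows because $a_{ij}(1)=0$ in \eqref{eq:HK}: at every $y\in\MM$, $f(y)$ is obtained from the formula by switching off all boundary contributions, so $f(y)\in F(y)$ (corresponding to all $\alpha_{\{i,j\}}=0$). Upper semicontinuity of $F$ then forces $K(x)\subseteq F(x)+B(\eps)$ for every $\eps>0$, hence $K\subseteq F$, while $F\subseteq K$ is built into the definitions. The growth estimate is in fact uniform, since every $v\in F(x)$ satisfies $|v_i|\le\sum_{j\ne i,\,\|x_i-x_j\|\le 1}\phi_{ij}(\|x_i-x_j\|)\,\|x_i-x_j\|\le(N-1)M$ with $M=\max_{i,j}\sup_{[0,1]}\phi_{ij}$, so the sublinear growth hypothesis of the existence proposition in Section~\ref{sec:BD} is trivially met, and the solution set is nonempty, compact, and connected in $AC([0,T],\R^{nN})$.

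Finally, P1) follows by symmetry: the first step lets one write any Filippov solution as $\dot x_i(t)=\sum_{j\ne i}\beta_{ij}(t)(x_j(t)-x_i(t))$ with $\beta_{ij}(t)=\beta_{ji}(t)$ (namely $\alpha_{\{i,j\}}(t)\phi_{ij}(1)$ on boundary pairs and $\phi_{ij}(\|x_i-x_j\|)$ on interior pairs). Summing over $i$ and grouping unordered pairs gives $\sum_i\dot x_i=\sum_{i<j}\beta_{ij}\bigl[(x_j-x_i)+(x_i-x_j)\bigr]=0$ almost everywhere, so $\bar x$ is constant along the trajectory. The main obstacle in the proof is the bookkeeping of the first step: a direct two-agent check shows $F(x)$ is \emph{not} the naive product $F_1\times\cdots\times F_N$ (it is the diagonal segment, not the rectangle), and the symmetric identification $\alpha_{\{i,j\}}=\alpha_{\{j,i\}}$ is precisely what makes the antisymmetric cancellation in P1) work.
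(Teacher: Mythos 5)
Your proposal is correct and in substance follows the paper's route: the paper likewise treats \eqref{e:HKF} as a direct computation, bounds $F$ so that the general existence/compactness result for upper semicontinuous convex-valued inclusions applies, derives $K=F$ from $a_{ij}(1)=0$, and obtains P1) from the fact that every element of $F(x)$ has zero sum. Where you differ is in the mechanics, mostly to your advantage: for $K\subseteq F$ the paper lets $y\to x$ inside an open region where all boundary pairs are switched off and uses $f(x)=\lim f(y)$, while you note $f(y)\in F(y)$ for every $y$ and conclude by upper semicontinuity of $F$ --- the same idea ($a_{ij}(1)=0$) in a cleaner wrapper; moreover you correctly insist that \eqref{e:HKF} must be read with a shared coefficient $\alpha_{\{i,j\}}=\alpha_{\{j,i\}}$ rather than as the Cartesian product $F_1\times\cdots\times F_N$ (your two-agent check is right: the Filippov set is the diagonal segment, not the rectangle), which is precisely what makes the paper's own zero-sum argument for P1) consistent with the formula; note also that $\sum_i v_i=0$ for all $v\in F(x)$ already follows from $\sum_i f_i(y)=0$ at every $y$ (symmetry of $\phi_{ij}$), so P1) does not strictly need the representation formula. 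The one step to tighten is the claim that Bernoulli weights realize every $\alpha\in[0,1]^{S}$: at degenerate configurations not every on/off pattern of the boundary pairs is attainable near $x$ (e.g.\ $n=1$ with agents at $0,1,0,1$, where the four unit-distance constraints satisfy a linear relation), so some of your limit points $e_T$ do not exist; the formula still holds because the target set is a zonotope each of whose vertices is the limit value along a sign pattern realizable in a generic linear direction, but as written your argument covers only the nondegenerate case --- a level of detail which, to be fair, the paper does not supply either, since it dismisses \eqref{e:HKF} with ``can be verified by computation''.
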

\begin{proof} The explicit expression \eqref{e:HKF} can be verified by computation. Set $C_{ij}=\sup_{r\in [0,1]}\phi_{ij}(r)<+\infty$,
$i,j=1,\ldots,N$,
and $C'=\max_{ij}C_{ij}$.
Given $x\in\R^{nN}$, we have 
$\|x_i-x_j\|\leq \|x_i\|+\|x_j\| \leq \sqrt{2}\|x\|$, thus 
$\|f_i(x)\|\leq N\sqrt{2}C'\|x\|$.
Finally, $\|f(x)\|\leq N\sqrt{2N}C'\|x\|$
and the sublinear estimate holds for $F$.\\
Now fix $x\in\MM$ and consider 
$J(x)=\{J_1,\ldots,J_{\ell(x)}\}$.
Define the open set
\[
A(x)=\left\{y: \forall k\in\{1,\ldots,\ell(x)\},\ \forall\ i,j\in J_k,
\ i\not= j,\
\text{we have}\ \|y_i-y_j\|>1\right\}.
\]
Then $f(x)=\lim_{y\to x,y\in A(x)} f(y)$. 
Thus, the definition of $K$ given by \eqref{eq:Kras-multi} coincides with $F$ given by \eqref{eq:Fil-multi}.\\
Last statement was proved in \cite{frasca} for the case $n=1$, and can be easily adapted to the case $n\geq 1$. Indeed, observe that any vector field 
$(v_1,\ldots,v_N)\in F(x)$ satisfies $\sum_{i=1}^N v_i=0$, thus any (convex) combination of vector fields in $F(x)$ satisfies it too. The barycenter 
$\bar x$ is a continuous function satisfying $\dot{\bar{x}}=\sum_{i=1}^N v_i=0$ for a.e. time, then it is constant.
\end{proof}

\begin{prop}\label{prop:HK-Car}
Consider the system \eqref{eq:HK} with $\phi_{ij}$ Lipschitz continuous.
Then, the set of Caratheodory solutions is contained within the set of Filippov and Krasovskii solutions.
\end{prop}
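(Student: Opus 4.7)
The plan is to reduce the inclusion to the pointwise statement that $f(x)\in F(x)$ for every $x\in\R^{nN}$, because if this holds then any absolutely continuous function $x(\cdot)$ satisfying $\dot{x}(t)=f(x(t))$ for a.e.~$t$ automatically satisfies $\dot{x}(t)\in F(x(t))$ for a.e.~$t$, i.e.\ is a Filippov solution; Proposition \ref{prop:FK-HK} then gives the Krasovskii statement for free since the two multifunctions coincide.

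To verify $f(x)\in F(x)$ for every $x$, I would split into the two cases given by the stratification of Proposition \ref{prop:M}. For $x\in\R^{nN}\setminus\MM$, the map $f$ is locally Lipschitz at $x$, hence continuous there, and a short computation from \eqref{eq:Fil-multi} shows $F(x)=\{f(x)\}$, so the inclusion is trivial. For $x\in\MM$, I would use the explicit description \eqref{e:HKF}: the multifunction $F_i(x)$ is the sum of (i) the convex set generated by the boundary terms $\alpha_j\phi_{ij}(1)(x_j-x_i)$ for pairs with $\|x_i-x_j\|=1$, and (ii) the fixed interior contribution from pairs with $\|x_i-x_j\|<1$. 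Choosing $\alpha_j=0$ for every boundary pair gives precisely $\sum_{j:\|x_i-x_j\|<1}\phi_{ij}(\|x_i-x_j\|)(x_j-x_i)$, which coincides with $f_i(x)$ as defined in \eqref{eq:HK}, since $a_{ij}(1)=0$ forces the boundary pairs to contribute nothing to $f_i(x)$ itself.

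Having established $f(x)\in F(x)$ pointwise, the conclusion follows: every Caratheodory solution $x:[0,T]\to\R^{nN}$ is absolutely continuous and satisfies $\dot{x}(t)=f(x(t))\in F(x(t))$ for a.e.~$t\in[0,T]$, so it is a Filippov solution, and by Proposition \ref{prop:FK-HK} a Krasovskii solution as well.

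The only step that requires any care is the pointwise inclusion on $\MM$, and its subtlety is entirely due to the fact that $a_{ij}$ was defined so that $a_{ij}(1)=0$. This is precisely the convention that makes the pointwise value $f(x)$ correspond to the extremal choice $\alpha_j=0$ in \eqref{e:HKF} and hence lie in $F(x)$; under the alternative convention $a_{ij}(1)=1$ of \eqref{e:HK1}, the same argument instead singles out the extremal choice $\alpha_j=1$, which is why the inclusion still holds in that case too but via a different extreme point of $F(x)$. I expect this observation to be the only genuine content of the proof; the rest is the routine matching of definitions.
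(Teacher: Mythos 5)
Your proposal is correct and follows essentially the same route as the paper: reduce the inclusion to the pointwise statement $f(x)\in F(x)$ for every $x$, then note that any Caratheodory solution is automatically a Filippov solution, with the Krasovskii case following from the coincidence $K=F$ established in Proposition \ref{prop:FK-HK}. The only cosmetic difference is how the pointwise inclusion on $\MM$ is justified: you pick the extreme point $\alpha_j=0$ in the explicit formula \eqref{e:HKF}, while the paper obtains $f(x)$ as the limit of $f(y)$ along the open region $A(x)$ used in the proof of Proposition \ref{prop:FK-HK}; the two arguments are interchangeable.
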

\begin{proof}
Notice that $f(x)$ is continuous outside $\MM$
and, as in the proof of Proposition \ref{prop:FK-HK}, it holds
$f(x)=\lim_{y\to x,y\in A(x)}f(y)$. Therefore
$f(x)\in F(x)$ for all $x\in\R^{nN}$, thus we conclude.\\
Since a solution $x(\cdot)$ in Caratheodory sense
satisfies the equation for almost every time,
one has $\dot{\bar{x}}(t)=0$ for almost every $t$, thus P1) holds true.
\end{proof}

We are now ready to prove the inclusions given in Proposition \ref{p-inclusion} above.

{\noindent{\textsc{Proof of Proposition \ref{p-inclusion}.}} We proved in Proposition \ref{prop:FK-HK} that Filippov and Krasovskii solutions coincide. We proved in Proposition \ref{prop:HK-Car} that Caratheodory solutions are included in the set of Filippov solutions. By definition, stratified solutions
are Krasovskii solutions and also  satisfy the equation for almost every time,
thus they are also Caratheodory solutions.
Since both CLSS and Caratheodory solutions are Lipschitz functions of time (due to boundedness of the right hand side), one has that CLSS solutions are Caratheodory: indeed, they can be seen as limits of Euler explicit schemes for Caratheodory solutions.
Finally, it is also evident from the Definition \ref{def:ODE-sol} (and Remark \ref{rem-cara}) that classical solutions are also Caratheodory ones. {$\hfill\Box$\vspace{0.1 cm}\\}

\subsection{Existence of solutions} \label{s-existence}
We now deal with existence of solutions. The existence of Fillippov (and Krasovskii) solutions are guaranteed
by the general theory of differential inclusions, as recalled in Proposition \ref{prop:FK-HK}. Also, CLSS solutions exist, as they are uniform limits of Lipschitz approximated trajectories.  We now prove that, for every initial datum there exists at least one Caratheodory solution defined for all times under the more general conditions of $\phi_{ij}$ only continuous.
\begin{prop}
Let us consider the general HK system \eqref{eq:HK} and assume
that $\phi_{ij}\in \C([0,1],]0,+\infty[)$. Then for every initial datum 
$\bar{x}\in \R^{nN}$ there exists at least one Caratheodory solution defined 
for all times $t\geq 0$.
\end{prop}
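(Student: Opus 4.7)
The natural approach is approximation by systems with continuous right-hand side. For each $\epsilon>0$, define $a_{ij}^\epsilon:[0,+\infty)\to\R^+$ to coincide with $a_{ij}$ outside a small neighborhood of $r=1$; concretely, $a_{ij}^\epsilon(r)=\phi_{ij}(r)$ for $r\in[0,1-\epsilon]$, linear from $\phi_{ij}(1-\epsilon)$ down to $0$ on $[1-\epsilon,1]$, and $a_{ij}^\epsilon(r)=0$ for $r\geq 1$. The resulting vector field $f^\epsilon$ is continuous on $\R^{nN}$ and, exactly as in Proposition \ref{prop:FK-HK}, satisfies a sublinear growth bound $\|f^\epsilon(x)\|\leq C(1+\|x\|)$ with $C$ independent of $\epsilon$.

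By Peano's theorem there exists a global $C^1$ solution $x^\epsilon:[0,+\infty)\to\R^{nN}$ with $x^\epsilon(0)=\bar x$, global in time by the sublinear bound and Gronwall. On any $[0,T]$ this gives $\|x^\epsilon(t)\|\leq M(T)$ uniformly in $\epsilon$, hence $\|\dot x^\epsilon\|_\infty$ is uniformly bounded and the family is equi-Lipschitz in time. Arzel\`a--Ascoli together with a diagonal argument extracts a subsequence $x^{\epsilon_k}$ converging uniformly on compact time intervals to some Lipschitz function $x$ with $x(0)=\bar x$.

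It remains to pass to the limit in the integral identity $x^{\epsilon_k}(t)=\bar x+\int_0^t f^{\epsilon_k}(x^{\epsilon_k}(s))\,ds$ in order to obtain $x(t)=\bar x+\int_0^t f(x(s))\,ds$, which is equivalent to the Caratheodory formulation. Since $a_{ij}^\epsilon\to a_{ij}$ pointwise everywhere and uniformly on compact subsets of $[0,1)\cup(1,+\infty)$, at every $s$ with $x(s)\notin\MM$ the uniform convergence of $x^{\epsilon_k}$ gives $f^{\epsilon_k}(x^{\epsilon_k}(s))\to f(x(s))$. If $\lambda^1(\{s\in[0,T]:x(s)\in\MM\})=0$, dominated convergence concludes.

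The principal obstacle is the set $E=\{s\in[0,T]:x(s)\in\MM\}$ when it has positive Lebesgue measure, i.e.\ when some pair distance $\|x_i(s)-x_j(s)\|$ stays locked at $1$ on an interval. Here one must exploit the structural fact $a_{ij}(1)=0$: in the target $f(x(s))$ a pair at distance exactly $1$ contributes nothing, so the task is to verify that the approximating contributions $a_{ij}^{\epsilon_k}(\|x_i^{\epsilon_k}-x_j^{\epsilon_k}\|)$ vanish in the limit for almost every $s\in E$. An alternative, possibly cleaner, route is a direct construction: on each stratum $M_J$ from Proposition \ref{prop:M} the field $f$ is continuous, so Peano gives local Caratheodory solutions, and one uses $a_{ij}(1)=0$ to glue across crossings of $\MM$ while checking that the adjacency prescription at those crossings is consistent with the sign of $\tfrac{d}{dt}\|x_i-x_j\|^2$. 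I expect this control of trajectories which linger on $\MM$---equivalently, the consistency of the stratum-by-stratum concatenation---to be the technically delicate part of the argument.
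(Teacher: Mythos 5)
Your main route (mollify $a_{ij}$ near $r=1$, solve by Peano, extract a uniform limit by Arzel\`a--Ascoli) stalls exactly at the decisive step, and the missing step is not a routine verification. What the compactness argument gives you, in general, is only a Krasovskii/Filippov solution: when the limit trajectory spends a set of positive measure on $\MM$, the regularized interaction terms $a_{ij}^{\epsilon_k}(\|x_i^{\epsilon_k}-x_j^{\epsilon_k}\|)$ need not vanish -- the approximating trajectories can be captured inside the mollification band $[1-\eps,1]$ and track a sliding mode in which the pair at distance one keeps an effective interaction strength strictly between $0$ and $\phi_{ij}(1)$. The paper itself exhibits such sliding Filippov solutions, e.g.\ \eqref{eq:Fil-sol-3ag-R}, which are \emph{not} Caratheodory solutions precisely because $a_{ij}(1)=0$; so the claim that the boundary contributions ``vanish in the limit for almost every $s\in E$'' is the whole difficulty, and you neither prove it nor give a mechanism (a selection of the initial datum, a modified regularization, or an a priori transversality estimate) that would rule out capture. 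As it stands, the argument proves existence of a Filippov solution (already known from Proposition \ref{prop:FK-HK}), not of a Caratheodory one.

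Your alternative sketch is indeed the route the paper takes, but it is only a sketch and omits the key construction. The paper, at each time the trajectory reaches $\MM$, builds a graph of active interactions: it starts from all pairs at distance strictly less than one and then, processing recursively the pairs $\{i,j\}$ at distance exactly one, computes the quantity $\alpha_{ij}$ (the projection on $x_i-x_j$ of the relative drift generated by the currently active edges) and adds the edge if and only if $\alpha_{ij}\leq\phi_{ij}(1)$. The point of this threshold is the dichotomy: if $\alpha_{ij}>\phi_{ij}(1)$ the distance strictly increases under the edge-free dynamics, so switching the interaction off is consistent with \eqref{eq:HK}; if $\alpha_{ij}\leq\phi_{ij}(1)$ then with the edge added one has $\alpha_{ij}'=\alpha_{ij}-2\phi_{ij}(1)<-\phi_{ij}(1)<0$, so the distance strictly decreases and switching the interaction on is consistent. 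This produces a continuous ODE (Peano applies) whose local solution is a genuine Caratheodory solution of \eqref{eq:HK} on a right-neighborhood, and a Lipschitz-continuation argument extends it to all $t\geq0$. Your proposal names the relevant sign condition on $\frac{d}{dt}\|x_i-x_j\|^2$ but does not supply the selection rule, does not handle several pairs simultaneously at distance one (the recursion over an ordering of those pairs, where earlier choices change $\alpha_{ij}$ for later pairs), and does not give the global-in-time extension; these are exactly the parts that make the proof work.
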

\begin{proof}
Given an initial condition $\bar{x}$, if $\bar{x}\in\R^{nN}\setminus\MM$
then \eqref{eq:HK} is continuous and locally bounded, thus by Peano
Theorem there exists a local (in time) solution $x(\cdot)$. 
This solution can be extended until the first time $t^1_\MM$
such that $x^1=x(t^1_{\MM})\in\MM$.\\
Given a (not directed) graph $G=(V,E)$, with $V=\{v_1,\ldots, v_n\}$,  we consider the equation
\begin{equation}\label{eq:G-sys}
\dot{x_i}=\sum_{j:\{i,j\}\in E} a_{ij}(\|x_i-x_j\|) (x_j-x_i).
\end{equation}
Since \eqref{eq:G-sys} has a continuous right-hand side, by Peano Theorem
for a fixed $G$ and an initial condition, there always exist a solution to \eqref{eq:G-sys}. Thus our strategy is to construct a $G$ so that
the solution to \eqref{eq:G-sys} from $x^1$ is also a 
Caratheodory solution to \eqref{eq:HK}.\\
Define $\I=\{\{i,j\}: \|x^1_i-x^1_j\|=1\}$ and let $G_1=(V_1,E_1)$
be the (not directed) graph with $V_1=\{v_1,\ldots, v_n\}$ and 
$\{i,j\}\in E_1$ if and only if $|x^1_i-x^1_j|< 1$. We now build a new graph 
$G_1'=(V_1,E_1')$, with $E_1\subset E_1'$.
First we order the elements of $\I$, then we proceed as follows
by recursion on the elements of $\I$.
If $\{i,j\}\in\I$ then set:
\[
\alpha_{ij}=(x^1_i-x^1_j)\cdot
\left( \sum_{(i,k)\in E_1} a_{ik} (x^1_k-x^1_i)- 
\sum_{(j,k)\in E_1} a_{jk} (x^1_k-x^1_j)\right),
\]
\[
\alpha_{ij}'=(x^1_i-x^1_j)\cdot
\left( \sum_{(i,k)\in E_1\cup \{\{i,j\}\}} a_{ik} (x^1_k-x^1_i)- 
\sum_{(j,k)\in E_1\cup \{\{i,j\}\}} a_{jk} (x^1_k-x^1_j)\right),
\]
where, for simplicity, we dropped the arguments in $a_{ik}$ and $a_{jk}$.
We add the edge $\{i,j\}$ to $E_1$ if and only if $\alpha_{ij}\leq \phi_{ij}(1)$.
Now, if $\alpha_{ij}>\phi_{ij}(1)>0$ then $\|x_i-x_j\|$ is increasing
along the solution to \eqref{eq:G-sys} for $G=G_1$.
Otherwise, since $\alpha_{ij}'=\alpha_{ij}-2 \phi_{ij}(1)$ and
$\alpha_{ij}\leq \phi_{ij}(1)$, then $\alpha_{ij}'<-\phi_{ij}(1)<0$, 
thus $\|x_i-x_j\|$ is decreasing along the solution to \eqref{eq:G-sys} 
for $G$ obtained from $G_1$ by adding the edge $\{i,j\}$.
In both cases the dynamics given by the graph is compatible with 
\eqref{eq:HK}.\\
Let $G_1'$ be the graph obtained at the end. We have that the
solution to \eqref{eq:G-sys} for $G=G_1'$ is also a Caratheodory
solution to \eqref{eq:HK}
on some interval $[t^1_{\MM},t^1_{\MM}+\delta_1]$ with $\delta_1>0$.\\
Let now $T>0$ be the maximal time so that $x(\cdot)$ can be defined
on $[0,T]$.  Assume, by contradiction, $T<+\infty$.
Then by the boundedness of $\phi_{ij}$,
we have that $x(\cdot)$ is Lipschitz continuous, thus we can define
$x(T)$. Applying the same reasoning as for $x(t^1_{\MM})$
we can extend the solution beyond $T$, thus reaching a contradiction.
\end{proof}

As for stratified solutions, their existence is ensured by Definition \ref{d-strat} itself as proved in next Proposition.
\begin{prop}
Let us consider the general HK system \eqref{eq:HK} with
$\phi_{ij}\in \C([0,1],]0,+\infty[)$. Then for every stratification
and  initial datum 
$\bar{x}\in \R^{nN}$, there exists a unique stratified solution defined 
for all times $t\geq 0$.
\end{prop}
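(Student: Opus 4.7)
The plan is to build the stratified solution piecewise and inductively. Starting at $t_0=0$ from $\bar{x}$, which lies in a unique stratum $M_{i_0}$ because the strata are pairwise disjoint, I distinguish two cases. If $i_0\in N_1$, then $f$ restricted to $M_{i_0}$ is smooth and tangent to $M_{i_0}$, so the classical Picard-Lindel\"of theorem on the smooth embedded manifold $M_{i_0}$ produces a unique solution $x(\cdot)$ on a maximal interval $[0,t_1[$, with $t_1\in ]0,+\infty]$, such that $x(t)\in M_{i_0}$ for all $t<t_1$, and either $t_1=+\infty$ or $x(t_1)\in\partial M_{i_0}$. If instead $i_0\in N_2$, Definition \ref{d-strat} directly provides a unique curve $\xi_{\bar{x}}$ on $[0,\epsilon[$ that immediately enters the type I cell $M_{\Sigma(i_0)}$, reducing after the instantaneous transition to the previous case.

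At the terminal time $t_1$ of the first piece, $x(t_1)\in\partial M_{i_0}$ lies in a unique stratum $M_{i_1}$ of strictly smaller dimension than $M_{i_0}$, by Definition \ref{def:strat}(ii). Reiterating the construction from $(t_1,x(t_1))$ produces a sequence of smooth pieces on $[t_k,t_{k+1}[$ that glue into a continuous, piecewise smooth curve satisfying the requirements of a stratified solution. Uniqueness at every step follows either from Picard-Lindel\"of (type I pieces) or directly from the definition of stratification (type II pieces), so there is at most one stratified solution generated by $S$ with $x(0)=\bar{x}$.

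The delicate part is to establish global extendibility, namely to rule out an accumulation of transition times in finite time. Using the sublinear growth bound on $F$ in Proposition \ref{prop:FK-HK}, any stratified solution satisfies $\|x(t)\|\leq(1+\|\bar{x}\|)e^{Ct}$ on its domain, hence stays in a compact set $K\subset\R^{nN}$ on every bounded time interval $[0,T]$. The local-finiteness condition of Definition \ref{def:strat}(i) then leaves only finitely many strata $M_i$ available for the trajectory to visit inside $K$. I expect the main obstacle to be showing that the trajectory cannot oscillate between these finitely many strata with accumulating switching times: this requires arguing, stratum by stratum, that whenever the solution enters a type I cell—either transversally from a boundary stratum or through a type II-to-type I instantaneous transition—it must spend a strictly positive amount of time there before exiting, so that the exit times of the smooth flow from each open embedded manifold cannot cluster at an interior point of $[0,T]$. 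Once this is secured, a standard maximality argument extends the stratified solution to all $t\geq 0$, completing the proof.
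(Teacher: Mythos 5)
Your construction coincides with the paper's: start in the unique stratum containing $\bar x$, flow inside a type I cell using the smoothness and tangency of $f$ on that cell (the paper simply invokes smoothness; you name Picard--Lindel\"of, which is fine since Definition \ref{d-strat} gives smoothness of $f$ on the cell even though $\phi_{ij}$ is only continuous), handle a type II cell through the curve $\xi_x$ provided by the definition, take the limit at the exit time, relabel the stratum containing the exit point, and recurse; uniqueness of each piece is obtained exactly as you obtain it, from the classical theory on type I cells and from the uniqueness built into Definition \ref{d-strat} on type II cells. (Two small remarks: after a type II step the next exit is from $M_{\Sigma(i_0)}$, not from $M_{i_0}$; and the existence of the limit $x(t_1)$ at the exit time is what the paper extracts from boundedness of the $\phi_{ij}$, which you only use implicitly through the sublinear bound of Proposition \ref{prop:FK-HK}.)

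The genuine gap is the one you flag yourself in the last paragraph: you do not prove that the switching times cannot accumulate, you only state that you ``expect'' this to follow from showing that each visit to a type I cell lasts a strictly positive time. That criterion is insufficient: strictly positive dwell times can still sum to a finite value (durations $2^{-k}$, say), so positivity per visit does not exclude a Zeno accumulation at an interior instant of $[0,T]$; and since item 6 of Definition \ref{def:ODE-sol} requires \emph{finitely many} switching times on $[0,T]$, such an accumulation would prevent the constructed object from being a stratified solution at all. Closing the argument requires something stronger, e.g. a locally uniform lower bound on dwell times on the compact set in which the trajectory is confined, or an argument using the specific structure of the strata of \eqref{eq:HK} (which pairwise distances can cross the threshold $1$, and how often). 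For comparison, the paper's own proof is very terse at exactly this point: it defines $x_1=\lim_{t\nearrow t_1}x(t)$ by boundedness of the $\phi_{ij}$, recurses, and asserts that ``again by boundedness of $\phi_{ij}$ we can prolong the solution for every time'' without an explicit non-accumulation argument. So your proposal reproduces the paper's scheme up to and including its weakest step, but, unlike the paper, it leaves that step explicitly open rather than concluding it.
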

\begin{proof}
Let $M_{i_0}$ be the stratum so that $\bar{x}\in M_{i_0}$. If $M_{i_0}$ is of type I then a local solution $x(\cdot)$ exists since $f$ is smooth on $M_{i_0}$. 
Let $t_1=\sup \{t: x(t)\in  M_{i_0} \}$, then by boundedness of $\phi_{ij}$
there exists $x_1=\lim_{t\nearrow t_1} x(t)$.\\
If $M_{i_0}$ is of type II, then by definition there exists a local solution 
$\xi_{\bar{x}}$ belonging to $M_{\Sigma(i_0)}$ for positive times. In this case we define
$t_1=\sup \{t: x(t)\in  M_{\Sigma(i_0)} \}$ and, by boundedness of $\phi_{ij}$s,
there exists $x_1=\lim_{t\nearrow t_1} x(t)$.\\
In both cases we let $M_{i_1}$ be the stratum so that $x_1\in M_{i_1}$
and proceed by recursion.\\
Again by  boundedness of $\phi_{ij}$, we can prolong the solution for every time. Moreover, such solution is unique by definition of stratification for
\eqref{eq:HK}.
\end{proof}

\subsection{Contractivity of the support}

In this section, we prove that the support of solutions (in any of the sense given above) is weakly contractive. This is a well-known property of Caratheodory solutions of HK models, see e.g. \cite{blondel2}. The proof of such property for Krasovskii solutions on the real line can be found in \cite[Proposition 3.iii]{frasca}. We will give a general proof for Krasovskii solutions 
in any dimension, again in the more general case of $\phi_{ij}$ only continuous.
\begin{prop} \label{p-contractive} Let $x(t)=(x_1(t),x_2(t),\ldots,x_N(t))$ be a solution to \eqref{eq:HK}, with  $\phi_{ij}\in \C([0,1],]0,+\infty[)$,
in any of the senses given in Definition \ref{def:ODE-sol}, and $0\leq T^1<T^2$. It then holds
\begin{equation}
\overline{co}\left(\left\{x_1(T^1),x_2(T^1),\ldots,x_N(T^1)\right\}\right)\supseteq \overline{co}\left(\left\{x_1(T^2),x_2(T^2),\ldots,x_N(T^2)\right\}\right).\label{e-inclusion}
\end{equation}
\end{prop}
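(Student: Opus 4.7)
The plan is to reduce to a single concept of solution and then use a supporting-hyperplane argument. By Proposition \ref{p-inclusion}, every classical, Caratheodory, CLSS, or stratified solution is in particular a Filippov solution (and Filippov equals Krasovskii here by Proposition \ref{prop:FK-HK}). So it suffices to prove the inclusion \eqref{e-inclusion} for Filippov solutions.

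The key structural fact, read directly from \eqref{e:HKF}, is that at almost every $t$ one has
\[
\dot{x}_i(t)=\sum_{k\neq i}\beta_{ik}(t)\,(x_k(t)-x_i(t)),
\]
for some measurable coefficients $\beta_{ik}(t)\geq 0$ (they are the $\alpha_j\phi_{ij}(1)$ for the boundary pairs and the $\phi_{ij}(\|x_i-x_j\|)$ for the interior pairs). In particular, $\dot{x}_i(t)$ lies in the cone over $\{x_k(t)-x_i(t):k\neq i\}$. This is exactly the condition needed for convex-hull contractivity.

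To exploit it, I would fix an arbitrary unit vector $v\in\R^n$ and the scalar $c=\max_{j}\langle x_j(T^1),v\rangle$, then show that the function
\[
h(t)=\max_{1\leq j\leq N}\langle x_j(t),v\rangle
\]
is non-increasing on $[T^1,T^2]$. Since each $\langle x_j(\cdot),v\rangle$ is absolutely continuous (being Lipschitz in time, thanks to the sublinear estimate in Proposition \ref{prop:FK-HK}), the finite maximum $h$ is absolutely continuous. At almost every $t$ where every $\langle x_j(\cdot),v\rangle$ and $h$ are differentiable, one has $\dot h(t)=\langle\dot x_{i^*}(t),v\rangle$ for any $i^*$ achieving the maximum at $t$: indeed, $\langle x_{i^*}(\cdot),v\rangle\le h(\cdot)$ with equality at $t$, so by classical one-sided comparisons both derivatives agree. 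For such an $i^*$, the inequality $\langle x_k(t)-x_{i^*}(t),v\rangle\leq 0$ for every $k$ combined with the non-negativity of the $\beta_{i^*k}(t)$ yields $\langle \dot x_{i^*}(t),v\rangle\leq 0$, hence $\dot h(t)\leq 0$ a.e.

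Integrating gives $h(T^2)\leq h(T^1)=c$, i.e.\ every $x_j(T^2)$ lies in the closed half-space $\{y:\langle y,v\rangle\leq c\}$, which is a generic supporting half-space of $\overline{co}(\{x_1(T^1),\ldots,x_N(T^1)\})$. Intersecting over all $v\in\R^n$ realizes the convex hull at time $T^1$ and proves \eqref{e-inclusion}. The only subtle step is the a.e.\ identity $\dot h(t)=\langle \dot x_{i^*}(t),v\rangle$ for $i^*$ in the argmax, but this is standard for a maximum of finitely many absolutely continuous functions and poses no real obstacle; the rest is just bookkeeping.
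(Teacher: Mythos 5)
Your proof is correct, and it rests on the same key mechanism as the paper's: after reducing to Filippov--Krasovskii solutions via Proposition \ref{p-inclusion}, the structure \eqref{e:HKF} together with the positivity of $\phi_{ij}$ forces the projected velocity of any agent realizing the maximum of $\langle x_j(t),v\rangle$ to be nonpositive. Where you differ is in the packaging: you prove directly that for \emph{every} direction $v$ the support function $h(t)=\max_j\langle x_j(t),v\rangle$ is non-increasing (the argmax derivative identity you invoke is the standard "equality at a minimum of $h-f_{i^*}$" fact, valid a.e., and plays the role of the paper's appeal to Danskin's theorem), and then you recover \eqref{e-inclusion} from the representation of the closed convex hull as the intersection of the half-spaces $\{\langle\cdot,v\rangle\le h(T^1)\}$. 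The paper instead argues by contradiction: it takes the infimum of the "bad" times, reduces via its Claim a) to a crossing at time $0$, selects one outer normal $\nu$ of the polyhedron $X(0)$ at the exiting agent, and derives the contradiction from $\dot f\le 0$ on $\{f>0\}$. Your route is more direct and global (no contradiction, no reduction step, no choice of a special boundary normal), at the mild cost of quantifying over all directions $v$; both are sound, and yours arguably yields the slightly stronger and cleaner statement that every support function of the agent cloud is monotone along trajectories.
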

\begin{proof}  Let $x(\cdot)$ be a given Krasovskii solution and define the set $X(t):=\overline{co}\left(\left\{x_1(t),x_2(t),\ldots,x_N(t)\right\}\right)$. Also define the sets 
$$A(T^1):=\left\{T^2\in (T^1,+\infty)\mbox{~~s.t.~~} X(T^1)\not\supseteq X(T^2)\right\}.$$

The statement can be reformulated as following: for all $T^1\geq 0$ the set $A(T^1)$ is empty. We prove it by contradiction: assume that there exists $A(T^1)$ nonempty. Since it is bounded from below by $T^1$ itself, it admits an infimum $T^3\geq T^1$. We now prove the following:

\noindent {\bf Claim a)} It either holds $\inf(A(T^1))=T^1$ or $\inf(A(T^3))=T^3$.

The claim is proved as follows: if $T_3=T_1$, the first statement holds by construction. Otherwise, it holds $X(T^1)\supseteq X(T^3)$, since the condition is closed. Take a sequence $T^2_k\in A(T^1)$ with $T^2_k\searrow T^3$ and observe that  $X(T^1)\not \supseteq X(T^2_k)$ implies $X(T^3)\not \supseteq X(T^2_k)$. Thus $T^2_k\in A(T^3)$ for all $k$, hence $T^3=\inf(A(T^3))$.

Thanks to Claim a), by renaming $T^3=0$ or $T^1=0$, we assume $\inf(A(0))=0$ from now on. Take now a sequence of times $t_k\searrow 0$ such that there exists $i=1,\ldots, N$ for which $x_i(t_k)\not\in X(0)$. Since the number of agents is finite, eventually passing to a subsequence, there exists a single agent (that we relabel as agent 1) satisfying $x_1(t_k)\not\in X(0)$. By continuity of $x_1(t)$, it holds $x_1(0)\in \partial X(0)$, that is the boundary of $X(0)$. Since $X(0)$ is a $n$-dimensional convex polyhedron, there exists a small ball $B(x_1(0),\eps)$ and a finite number of hyperplanes passing through $x_1(0)$ identified by outer unitary vectors $\nu_1,\ldots \nu_j$ such that:
\begin{itemize}
\item for all $x\in X(0)$ it holds $(x-x_1(0))\cdot \nu_l\leq 0$ for all $l=1,\ldots,j$;
\item for all $x\in B(x_1(0),\eps)\setminus X(0)$ it holds $(x-x_1(0))\cdot \nu_l>0$ for at least one index $l=1,\ldots,j$.
\end{itemize}
Since chosen unitary vectors are in finite number, eventually passing to a subsequence of $t_k$, one can select a single unitary vector (denoted simply as $\nu$ from now on) such that $(x_1(t_k)-x_1(0))\cdot \nu>0$ for all $t_k$.

We now define the functions $f_i:=(x_i(t)-x_1(0))\cdot \nu$, that are absolutely continuous, and $f(t):=\max_{i=1,\ldots,N}f_i(t)$, that is the maximum of a finite number of absolutely continuous functions, hence absolutely continuous itself. Since $f(t_k)\geq f_1(t_k)>0$, for any choice of $\eps'>0$ the set $A_{\eps'}:=(f(t)>0)\cap (0,\eps')$ is nonempty. For almost every $t\in A_{\eps'}$, one has that $f,f_1,\ldots, f_N$ are differentiable. Observe that, if $x_i(t)$ realizes $f(t)=f_i(t)$, then for each $j\neq i$ it holds
\begin{equation}
(x_j(t)-x_i(t))\cdot \nu=(x_j(t)-x_1(0))\cdot \nu+(x_1(0)-x_i(t))\cdot \nu=f_j(t)-f_i(t)\leq f(t)-f(t)=0.\label{e-jineg}
\end{equation}

We now compute $\dot f_i(t)$ for $t$ such that $f(t)=f_i(t)>0$ and $f_i(t)$ is differentiable. Since the Krasovskii multifunction satisfies \eqref{e:HKF}, there exist $\alpha_j\in[0,1]$ such that 
$$\dot f_i(t)=\dot x_i(t)\cdot \nu=\sum_{j\neq i : \|x_i-x_j\|=1} \alpha_j \phi_{ij}(1) (x_j(t)-x_i(t))\cdot \nu+ \sum_{j\neq i : \|x_i-x_j\|<1} \phi_{ij}(\|x_i-x_j\|) (x_j(t)-x_i(t))\cdot \nu\leq 0.$$
Here we used \eqref{e-jineg} and positivity of $\phi_{ij}$. By Danskin Theorem \cite{danskin}, it holds $\dot f=\max_{i\mbox{~s.t.~}f(t)=f_i(t)}\dot f_i(t)$, hence $\dot f\leq 0$ whenever $f>0$ and it is differentiable. This implies that $f$ is never strictly positive. This contradicts $f(t_k)>0$. Thus, for the chosen Krasovskii solution, \eqref{e-inclusion} holds. 

Since the proof holds for any Krasovskii solution, the statement holds for any definition of solution, by recalling Proposition \ref{p-inclusion} above.
\end{proof}

\section{The linear Hegselmann-Krause model in $\R$}
\label{sec:HK-R}
Here we focus on the Hegselmann-Krause model \eqref{eq:HK} with $n=1$, $\phi_{ij}\equiv 1$, i.e.
on the linear case in dimension one. Even in this simplified setting, the set of solutions is highly dependent on the choice of the  definition.
Moreover, uniqueness and properties P1-2-3) may fail.

\subsection{A toy example: two agents} \label{sec:toy-ex}
The simplest non-trivial example of \eqref{eq:HK} is given by the case $n=1$, $N=2$ and $\phi_{ij}\equiv 1$, i.e. by the system:
\begin{equation}\label{eq:HK1d2a}
\dot{x_1}=\chi_{|x_1-x_2|<1} (x_2-x_1), \qquad
\dot{x}_2=\chi_{|x_1-x_2|<1} (x_1-x_2), 
\end{equation}
where $\chi$ is the indicator function.
We consider the initial condition:
\begin{equation}\label{eq:ic-toy}
x(0)=(x_{1,0},x_{2,0}).
\end{equation}
For initial conditions 
such that $|x_{1,0}-x_{2,0}|\not=1$,
the solution is unique (for all considered concepts): constant for the case $|x_{1,0}-x_{2,0}|>1$ and
verifying:
\begin{equation}\label{eq:exp-conv-sol}
x_1(t)-x_2(t)=e^{-2t} (x_{1,0}-x_{2,0}),\qquad
x_1(t)+x_2(t)=x_{1,0}+x_{2,0},
\end{equation} for the case $|x_{1,0}-x_{2,0}|<1$.\\
To deal with the case $|x_{1,0}-x_{2,0}|=1$, we first distinguish two
possible stratifications, both based on the stratified set:
\begin{equation}\label{eq:str-set-2a}
\Gamma=M_1\cup M_2 \cup M_3
\end{equation}
with $M_1=\{(x_1,x_2): |x_1-x_2|<1\}$,
$M_2=\{(x_1,x_2): |x_1-x_2|>1\}$ and
$M_3=\{(x_1,x_2): |x_1-x_2|=1\}$. 
The first is given by $S_1=(\Gamma, \{1,2,3\},\emptyset,\emptyset)$,
and the second by $S_2=(\Gamma, \{1,2\},\{3\},\Sigma)$
with $\Sigma(3)=1$.
We have  the following:
\begin{prop}\label{prop:toy-ex}
Consider the Cauchy problem \eqref{eq:HK1d2a}-\eqref{eq:ic-toy} with
$|x_{1,0}-x_{2,0}|=1$. Then, the following holds:
\begin{itemize}
\item[i)] The only classical solution is the constant one
$x(t)\equiv x(0)$.
\item[ii)] There is an infinite number of Caratheodory  solutions 
parameterized by $\bar{t}$: constant on the interval $[0,\bar{t}[$,
and for $t\geq \bar{t}$ given by
\[
x_1(t)-x_2(t)=e^{-2(t-\bar{t})} (x_{1,0}-x_{2,0}),\qquad
x_1(t)+x_2(t)=x_{1,0}+x_{2,0}.
\]
\item[iii)] Filippov (and Krasovsky) solutions coincide with Caratheodory
solutions. 
\item [iv)] The unique CLSS  solution is the constant one.
\item[v)] The only stratified solution for $S_1$ is the constant solution,
while the only one for $S_2$ is \eqref{eq:exp-conv-sol}.
\item[vi)] There is no robust solution.
\item[vii)] The constant solution and \eqref{eq:exp-conv-sol}
are the only cone-robust solutions (for any definition for which they
are solutions).
\end{itemize}
In particular  the only concept of solution for which \eqref{eq:exp-conv-sol} is the unique solution is that of stratified solution for the stratification $S_2$.
\end{prop}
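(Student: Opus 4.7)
My plan is to reduce the 2-agent problem to a scalar ODE for the difference $d(t) := x_1(t) - x_2(t)$. By barycenter invariance (Proposition \ref{prop:FK-HK} for Filippov/Krasovskii and, via Proposition \ref{p-inclusion} or direct inspection, for the other concepts) the sum $x_1(t)+x_2(t)$ is constant for every solution, so each solution is entirely determined by $d(\cdot)$, which satisfies the scalar equation $\dot d = -2\chi_{|d|<1}\,d$. Without loss of generality I take $d(0)=1$. This reduces the whole statement to classifying the possible behaviours of $d$ starting from the single discontinuity point $d=1$.

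For (i), the ODE forces $\dot d(0)=0$; if $d$ ever dropped below $1$, at the first such time $\bar t$ the right-hand derivative would jump to $-2d(\bar t)\neq 0$, contradicting continuity of $\dot d$. For (ii), I verify directly that the proposed family is absolutely continuous and satisfies the ODE off the single point $\bar t$. For (iii), I invoke Proposition \ref{prop:FK-HK} for the Filippov $=$ Krasovskii identity, and for the characterisation of both combine three ingredients: (a) Proposition \ref{p-contractive} forces $d(t)\leq 1$; (b) the Filippov multifunction at $d=1$ is $\{-2\alpha\,d : \alpha\in[0,1]\}\subset[-2,0]$, so $d$ is non-increasing; (c) in $\{d<1\}$ the vector field is Lipschitz with unique exponentially decaying solution. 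Together these imply the set $\{t : d(t)=1\}$ is an initial interval $[0,\bar t]$, after which $d(t)=e^{-2(t-\bar t)}$, matching the Caratheodory family; this establishes (iii) and rules out additional Filippov solutions beyond Caratheodory ones.

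For (iv), every sample-and-hold approximation starts by evaluating $f$ at $x(0)\in\MM_{12}$, where $\chi_{|d|<1}$ vanishes; hence the first sample (and, inductively, all subsequent samples) produces no motion, so the uniform limit is the constant. For (v), note $f(x(0))=0\in T_{x(0)}M_3$: with stratification $S_1$ all strata are type I, so the unique classical $M_3$-trajectory from $x(0)$ is constant; with $S_2$ the stratum $M_3$ is type II with $\Sigma(3)=1$, forcing the solution to enter $M_1$ along the unique classical $M_1$-trajectory, which is \eqref{eq:exp-conv-sol} by explicit integration.

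For (vi)--(vii), I exploit that any $y$ with $|y_1-y_2|\neq 1$ admits a unique solution (constant if $|y_1-y_2|>1$, exponential decay if $|y_1-y_2|<1$). Sequences $y_\nu\to x(0)$ can be drawn from either side, producing two distinct uniform limits (constant vs.\ \eqref{eq:exp-conv-sol}), so no solution is robust. For cone-robustness, the open half-space cones $K^\pm=\{\pm(k_1-k_2)>0\}$ each isolate one side: $K^+$ yields the constant and $K^-$ yields \eqref{eq:exp-conv-sol}, so these two solutions are cone-robust. Any intermediate $\bar t$-solution fails cone-robustness because every cone with non-empty interior meets at least one of the two half-spaces, along which the unique solution is either constant or \eqref{eq:exp-conv-sol}, and neither agrees with the intermediate solution on any interval beyond $\bar t$. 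The main obstacle I anticipate is the characterisation step in (iii): ruling out pathological Filippov trajectories (multiple touches of $\{d=1\}$ or fractional-rate motion on it) requires a careful interplay between the monotonicity from the Filippov multifunction, the contractivity of Proposition \ref{p-contractive}, and Lipschitz uniqueness in the interior.
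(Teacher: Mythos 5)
Your proposal is correct and follows essentially the same route as the paper: monotone decrease of the inter-agent distance forced by the Filippov inclusion plus Lipschitz uniqueness once inside $|d|<1$ for ii)--iii), constancy of every sample-and-hold approximation for iv), the type I/type II dichotomy for v), and one-sided perturbations of the initial datum for vi)--vii). Your reduction to the scalar equation for $d=x_1-x_2$ via barycenter invariance is only a repackaging of the paper's direct argument on $|x_1-x_2|$ (and in i) the contradiction is with the equation at $\bar t$, whose right-hand derivative is $-2d(\bar t)$ while $f$ forces it to vanish, rather than with continuity of $\dot d$, which classical solutions need not have).
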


\begin{remark} \label{r-2agents}
If we consider the variant model \eqref{e:HK1}, then \eqref{eq:exp-conv-sol} is
the only classical, Caratheodory, CLSS and stratified solution.
This special situation of uniqueness does not occur for more than two
agents, see Section \ref{sec:three-inR}.
\end{remark}

\begin{proof}
Let us start with Filippov solutions. If $x(\cdot)$ is a solution,
we have $\frac{d\,|x_1-x_2|}{dt}(t)\leq 0$ for almost every $t$, thus
we can define $\bar{t}=\inf\{t: |x_1(t)-x_2(t)|<1\}$, possibly $\bar{t}=+\infty$.
For $t\geq \bar{t}$, $x(\cdot)$ is a solution to a linear ODE, thus it is unique.
This shows that Fillippov (and Krasovsky) solutions are those given by ii). Since they satisfy \eqref{eq:HK1d2a} for almost every time, they are also Caratheodory solutions. This proves ii) and iii).\\
The only Caratheodory solution satisfying \eqref{eq:HK1d2a} for all times
is the constant one, thus i) is proved. Similarly, each sample-and-hold solution
is constant, thus iv) is proved.\\
For $S_1$ the cell $M_3$ is of type I, thus the constant solution is the stratified one, while for $S_2$ the cell $M_3$ is of type two and the solution must enter $M_1$, thus it coincides with \eqref{eq:exp-conv-sol}. This proves v).\\
Notice that, if we perturb the initial datum so that $|x_{1,0}-x_{2,0}|>1$
then the only solution is the constant one, while if perturb the initial datum so that $|x_{1,0}-x_{2,0}|<1$ then \eqref{eq:exp-conv-sol} is the only solution.
This proves vi) and vii).
\end{proof}

For what concerns the solution properties P1-2-3),
it is interesting to notice that some properties hold true for all solutions. More precisely, we have the following:
\begin{prop}\label{prop:bar-clu-2a}
Consider the Cauchy problem \eqref{eq:HK1d2a}-\eqref{eq:ic-toy} ,
then the following holds. The properties P1) and P2) hold for all solutions.
Property P3) only holds for classical, CLSS and stratified solutions.
\end{prop}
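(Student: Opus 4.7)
My proof plan proceeds by reducing everything to the explicit structure of solutions already established in Proposition \ref{prop:toy-ex}. First I dispose of the trivial case $|x_{1,0}-x_{2,0}|\neq 1$: here all concepts of solution coincide with the unique solution (constant if $|x_{1,0}-x_{2,0}|>1$, or the exponentially convergent \eqref{eq:exp-conv-sol} if $|x_{1,0}-x_{2,0}|<1$), and P1-2-3) are immediate. So the substance of the proposition concerns the boundary case $|x_{1,0}-x_{2,0}|=1$.

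For P1, I would observe that barycenter invariance was already established at the Filippov/Krasovskii level in Proposition \ref{prop:FK-HK}. By Proposition \ref{p-inclusion}, classical, Caratheodory, CLSS, and stratified solutions are all (a fortiori) Filippov solutions, so P1 holds for every concept in the list.

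For P2, I would use the explicit classification from Proposition \ref{prop:toy-ex}: every solution (in any of the senses) is either the constant one, or coincides with the constant one on $[0,\bar t)$ and with \eqref{eq:exp-conv-sol} shifted to $\bar t$ on $[\bar t,+\infty)$ for some $\bar t\in[0,+\infty)$. In the constant case one has $|x^\infty_1-x^\infty_2|=1$, which satisfies the clustering condition. In the non-constant case, the explicit formula gives $x_1(t)-x_2(t)=e^{-2(t-\bar t)}(x_{1,0}-x_{2,0})\to 0$ while the barycenter is conserved, hence $x^\infty_1=x^\infty_2=(x_{1,0}+x_{2,0})/2$. Either way the limit exists and P2 holds.

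For P3, the positive part is immediate: Proposition \ref{prop:toy-ex} gives uniqueness of the classical solution, of the CLSS solution, and of the stratified solution for a fixed stratification, so in each of these cases $x^\infty$ is a (deterministic) function of the initial data alone. The negative part requires exhibiting two Caratheodory (hence Filippov and Krasovskii) solutions from the same $x(0)$ with different limits: taking $\bar t=+\infty$ yields the constant solution with $x^\infty=(x_{1,0},x_{2,0})$, while any finite $\bar t$ yields consensus $x^\infty_1=x^\infty_2=(x_{1,0}+x_{2,0})/2$; these differ since $|x_{1,0}-x_{2,0}|=1\neq 0$. Hence P3 fails for Caratheodory, Filippov, and Krasovskii solutions. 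The only mildly delicate point is bookkeeping the quantifiers around the stratified case (P3 holds \emph{per stratification}, but $x^\infty$ depends on which of $S_1,S_2$ is chosen), which is consistent with the formulation in Theorem \ref{t-prop}.
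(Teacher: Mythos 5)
Your proof is correct and takes essentially the same route as the paper, whose entire proof is the one-line remark that the statement follows directly from Proposition \ref{prop:toy-ex}; you simply spell out the reduction (P1 via the inclusion into Filippov solutions, P2 from the explicit classification, and failure of P3 from the two distinct limits obtained with $\bar t<+\infty$ versus $\bar t=+\infty$). No gaps.
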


\begin{proof}
The proof follows directly from Proposition \ref{prop:toy-ex}.
\end{proof}

\subsection{The case of 3 agents in $\R$}\label{sec:three-inR}
The toy example of Section \ref{sec:toy-ex} is the minimal nontrivial example one can build. 
Uniqueness of solution is already lost,
however the set of solutions is given by a one-parameter family and some properties, such as invariance of the barycenter, still hold true.
In this section we consider three agents in $\R$, still with linear dynamics,
showing more complexity and a complete loss of such properties. 

We consider the dynamics \eqref{eq:HK} for $n=1$, $N=3$ and $\phi_{ij}=1$. The system reads as
\begin{equation}\label{eq:HK1d3a}
\left\{
\begin{array}{ll}
\dot{x_1}=\chi_{|x_1-x_2|<1} (x_2-x_1)+\chi_{|x_1-x_3|<1} (x_3-x_1), &\qquad
x_1(0)=x_{1,0},\\
\dot{x}_2=\chi_{|x_1-x_2|<1} (x_1-x_2)+\chi_{|x_2-x_3|<1} (x_3-x_2), &\qquad
x_2(0)=x_{2,0},\\
\dot{x_3}=\chi_{|x_1-x_3|<1} (x_1-x_3)+\chi_{|x_2-x_3|<1} (x_2-x_3), &\qquad
x_3(0)=x_{3,0}.\\
\end{array}
\right. 
\end{equation}

Notice that we can always change the order of the agents
and apply a translation,
thus we will assume $x_{1,0}\leq x_{2,0}=0\leq x_{3,0}$. We will distinguish the following Initial Conditions (IC for short) cases:
\begin{enumerate}[{IC}-A)]
\item   $x_{1,0}<-1$ and $x_{3,0}>1$;
\item $x_{1,0}>-1$ and $x_{3,0}<1$;
\item  $x_{1,0}=-1$ and $x_{3,0}>1$ (or
the symmetric case $x_{1,0}<-1$ and $x_{3,0}=1$);
\item $x_{1,0}=-1$ and $0<x_{3,0}<1$
(or the symmetric case $-1<x_{1,0}<0$ and $x_{3,0}=1$);
\item The most interesting case \eqref{e:E} is when initial distances are exactly 1, i.e. \begin{equation}x_{1,0}=-1,\qquad x_{2,0}=0,\qquad x_{3,0}=1.\tag{IC-E}\label{e:E}\end{equation}
\end{enumerate}

We have the following result for cases IC-A,B,C. See Figure \ref{fig-ABC}.

\begin{prop} \label{p-nonex-class}
The unique solution to IC-A (for any concept of solution in Definition \ref{def:ODE-sol}) is the constant one:

$$x_1(t)=x_{1,0},\qquad x_2(t)=x_{2,0},\qquad x_3(t)=x_{3,0}.$$

The unique solution to IC-B (for any concept of solution in Definition \ref{def:ODE-sol}, except for classical solution) is the one in which all agents exponentially converge to the barycenter, that is invariant. Classical solutions for $x_{3,0}-x_{1,0}\geq 1$ do not exist.

The unique solution to IC-C (for any concept of solution in Definition \ref{def:ODE-sol}) is the one in which $x_1,x_2$ exponentially converge to $\frac{x_{1,0}+x_{2,0}}2$ and $x_3$ is constant. For the symmetric case, $x_1$ is constant and $x_2,x_3$ exponentially converge to $\frac{x_{2,0}+x_{3,0}}2$.
\end{prop}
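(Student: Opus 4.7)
The plan is to split on the combinatorial structure of the initial configuration relative to the discontinuity set $\MM$ of Proposition \ref{prop:M}: IC-A lies in the interior of a Lipschitz cell, IC-B either lies in a single Lipschitz cell or is connected to another by a transversal crossing, and IC-C sits on a lower-dimensional stratum of $\MM$. The tools I would use are local Lipschitz uniqueness off $\MM$, the explicit form of the Filippov multifunction from Proposition \ref{prop:FK-HK}, the contractivity of the convex hull (Proposition \ref{p-contractive}), and the inclusions among solution concepts (Proposition \ref{p-inclusion}).

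For IC-A, all pairwise distances $\|x_{i,0}-x_{j,0}\|$ exceed $1$ strictly, so there is an open neighborhood $U$ of $x(0)$ on which every $a_{ij}$ vanishes identically. Consequently $f\equiv 0$ and $F\equiv K\equiv\{0\}$ on $U$. Any Filippov solution therefore has $\dot{x}(t)=0$ for small $t$ and is thus constant; constancy keeps $x(t)\in U$, so a direct bootstrap extends the argument for all $t\ge 0$, and the other concepts inherit the result via Proposition \ref{p-inclusion}.

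For IC-B I would first handle $x_{3,0}-x_{1,0}<1$, in which case all three edges are active and the system is a Lipschitz linear ODE whose matrix has eigenvalues $0$ and $-3$ (of multiplicity two) with $0$-eigenvector $(1,1,1)$; Cauchy-Lipschitz gives unique exponential convergence to the barycenter, and contractivity keeps the trajectory inside the fully-connected cell. When $x_{3,0}-x_{1,0}\ge 1$, only the edges $\{1,2\}$ and $\{2,3\}$ are initially active, and a direct computation yields $\tfrac{d}{dt}(x_3-x_1)=-(x_3-x_1)$ in this partial-graph cell and $\tfrac{d}{dt}(x_3-x_1)=-2(x_3-x_1)$ in the fully-connected cell. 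Both sides of the manifold $\{x_3-x_1=1\}$ push strictly toward the fully-connected side, which rules out Filippov sliding; combined with Lipschitz uniqueness on either side, a transverse crossing at a unique time $t^\star$ yields a unique Filippov (hence Krasovskii, Caratheodory, CLSS, stratified) solution. For classical non-existence when $x_{3,0}-x_{1,0}\ge 1$, I would observe that at $t^\star$ the left derivative of $\dot{x}_1$ is $x_2-x_1$ while the right derivative is $(x_2-x_1)+(x_3-x_1)=(x_2-x_1)+1$, producing a jump of size $1$ that obstructs differentiability.

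For IC-C, contractivity forces $x_1(t),x_2(t)\in[-1,0]$, whence $|x_3(t)-x_i(t)|\ge x_{3,0}>1$ for $i=1,2$ and all $t\ge 0$, so the edges incident to agent $3$ remain inactive and $\dot{x}_3\equiv 0$. The $(x_1,x_2)$ subsystem is exactly the two-agent toy problem of Section \ref{sec:toy-ex} at $|x_1-x_2|=1$, and the exponential-convergence trajectory stated in the proposition is the classical, CLSS and cone-robust solution, as well as the unique stratified solution for the natural extension of the stratification $S_2$ of Proposition \ref{prop:toy-ex}. The main obstacle I anticipate is reconciling the ``unique solution for any concept'' claim in IC-C with Proposition \ref{prop:toy-ex}(ii)-(iii), which exhibits a one-parameter family of Caratheodory, Filippov and Krasovskii solutions (constant on $[0,\bar t]$, then exponential) for exactly this boundary configuration. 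A faithful proof thus either restricts the uniqueness assertion to the concepts where it genuinely holds, or interprets ``unique'' up to the trivial delay parameter inherent to the two-agent boundary dynamics.
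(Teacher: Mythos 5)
Your IC-A and IC-B arguments are essentially the ``direct computation'' that the paper leaves implicit, and they are sound; your classical non-existence argument is the same mechanism as the paper's (the unique Caratheodory solution fails differentiability at the crossing time, and any classical solution would have to coincide with it). Three small repairs: in the fully-connected cell $\frac{d}{dt}(x_3-x_1)=-3(x_3-x_1)$, not $-2(x_3-x_1)$ (consistent with the eigenvalue $-3$ you quote) — harmless, since only the sign matters to exclude sliding on $\{x_3-x_1=1\}$; you should also check that $|x_1-x_2|$ and $|x_2-x_3|$ stay strictly below $1$ up to $t^\star$ (the estimate of Claim a) in Proposition \ref{p-Cprop}), so that $\{x_3-x_1=1\}$ is really the only stratum met; and in the borderline case $x_{3,0}-x_{1,0}=1$ one has $t^\star=0$, where the obstruction is the mismatch between $f_1(x(0))=x_{2,0}-x_{1,0}$ and the one-sided derivative $x_{2,0}-x_{1,0}+1$ of the unique Caratheodory solution at $t=0$ (your ``left/right derivative of $\dot x_1$'' should read: one-sided limits of $\dot x_1$, i.e.\ one-sided derivatives of $x_1$).

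The genuine gap is in IC-C. The reduction to the two-agent toy is right in spirit, but Proposition \ref{p-contractive} alone does not give $x_1,x_2\in[-1,0]$ (it controls the hull of all three agents); you need the short bootstrap: while agent $3$'s distances exceed $1$ it is frozen, and during that time the pair barycenter is conserved and $|x_1-x_2|$ is non-increasing, so those distances never reach $1$. More importantly, your identification of ``the concepts where uniqueness genuinely holds'' is wrong. Since $a_{12}(1)=0$ in \eqref{eq:HK1d3a}, the right-hand side vanishes at the IC-C configuration, so the constant trajectory is a classical (hence Caratheodory, Filippov, Krasovskii, CLSS) solution; by Proposition \ref{prop:toy-ex} i) and iv) the classical and the CLSS solutions are exactly the constant one, not the exponentially converging trajectory, which violates the equation at $t=0$ and is not a sample-and-hold limit. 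The exponentially converging trajectory is selected only by the stratified notion with an $S_2$-type stratification (and it is one of two cone-robust solutions); the attribution you make to classical/CLSS is the situation of the variant model \eqref{e:HK1}, cf.\ Remark \ref{r-2agents}, not of the model treated here. You are right to flag that the literal ``unique for any concept'' claim in IC-C clashes with Proposition \ref{prop:toy-ex} ii)--iii) — the paper's one-line proof does not address this — but the repair you propose (keeping classical and CLSS among the concepts for which the exponential solution is the unique one) asserts something false; the correct IC-C picture mirrors Proposition \ref{prop:toy-ex} verbatim, with agent $3$ inert.
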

\begin{proof} The proof is straightforward, by direct computation. Moreover, uniqueness of the Caratheodory solution in the IC-B case with the additional constraint $x_{3,0}-x_{1,0}\geq 1$ ensures the non-existence of a classical solution: indeed, if a classical solution exists, then it coincides with the Caratheodory one, that is not $C^1$ in this case.
\end{proof}

\begin{figure}[h]
\begin{center}
\includegraphics[scale=.35]{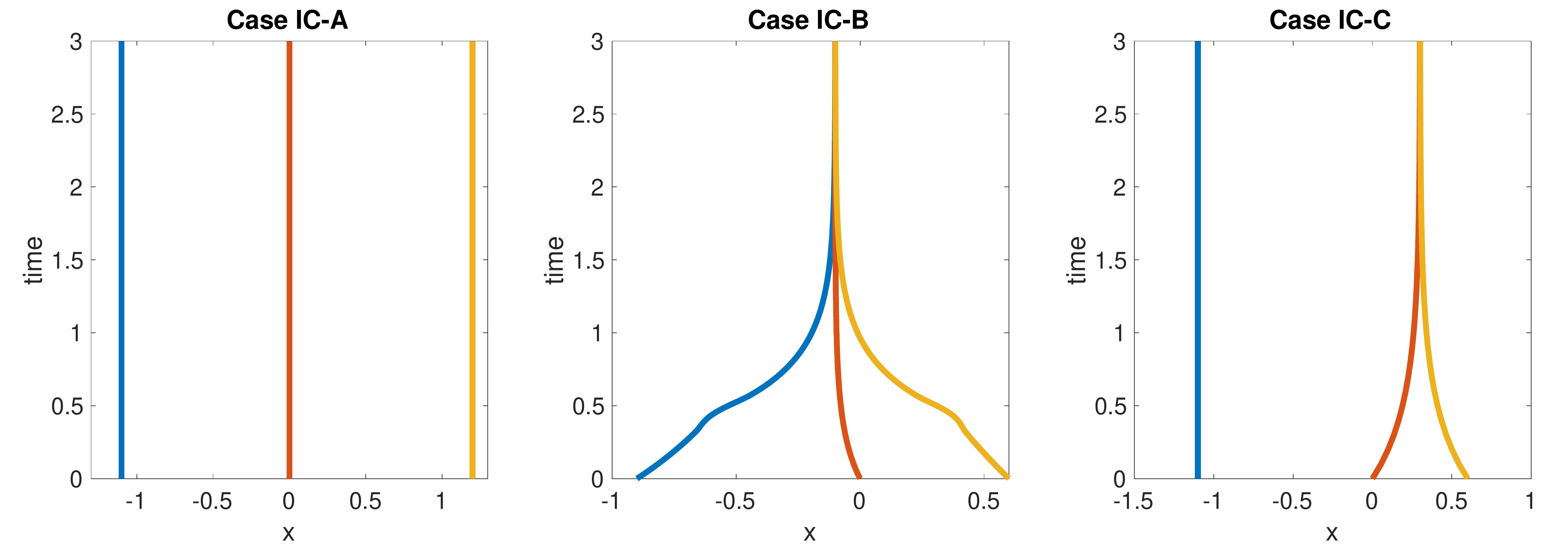}
\caption{Solutions for Initial Condition A (left), B (center), C (right).}
\label{fig-ABC}
\end{center}
\end{figure}

In the remainder we focus on case \eqref{e:E}, as case IC-D and its symmetric are treatable as a sub-case.
We first define four special trajectories: 
$x^\alpha,x^\beta,x^\gamma,x^\delta$. The first trajectory $x^\alpha$ is the constant solution:
\begin{equation}\label{eq:const-sol-3a}
x^\alpha_1(t)\equiv x_{1,0},\quad
x^\alpha_2(t)\equiv x_{2,0},\quad
x^\alpha_3(t)\equiv x_{3,0}.
\end{equation}
The second trajectory $x^\beta$ is the one exponentially converging to the barycenter:
\begin{equation}\label{eq:exp-conv-sol-3a}
x^\beta_1(t)=-e^{-t}\chi_{[0,\ln(2)[} + \frac{-e^{-3(t-\ln(2))}}{2}\chi_{[\ln(2),+\infty[},\ 
x^\beta_2\equiv 0,\ 
x^\beta_3(t)=e^{-t}\chi_{[0,\ln(2)[} + \frac{e^{-3(t-\ln(2))}}{2}\chi_{[\ln(2),+\infty[}.
\end{equation}
The third trajectory $x^\gamma$ has the first two agents exponentially converging and the third
constant:
\begin{equation}\label{eq:exp-conv-sol-3a-12}
x^\gamma_1(t)=-\frac{1+e^{-2t}}{2},\ x^\gamma_2=\frac{e^{-2t}-1}{2},\ x^\gamma_3\equiv 1.
\end{equation}
Finally, the fourth trajectory $x^\delta$ has the second and third agents exponentially converging and the first constant:
\begin{equation}\label{eq:exp-conv-sol-3a-23}
x^\delta_1(t)\equiv -1,\  x^\delta_2=\frac{1-e^{-2t}}{2},\
x^\delta_3(t)= \frac{1+e^{-2t}}{2}.
\end{equation}

\subsubsection{Caratheodory and Filippov solutions}
We now study the family of Caratheodory solutions with initial data \eqref{e:E}. We have the following:
\begin{prop} \label{p-Cprop}
Consider the Cauchy problem \eqref{eq:HK1d3a} in case \eqref{e:E}. Then the following holds:
\begin{itemize}
\item[i)] The set of Caratheodory
solutions is given by the union of three one-parameter families
parameterized by $\bar{t}\in [0,+\infty]$: 

\begin{equation}\begin{cases}
x^\alpha(t) &\mbox{~~for~}t\in [0,\bar{t}[,\\
x^i(t-\bar{t}), \mbox{~~with~}i=\beta,\gamma,\delta&\mbox{~~for~}t\geq \bar{t}.
\end{cases}\label{e-Csol}\end{equation}

\item[ii)] For the modified model \eqref{e:HK1}, the set of Caratheodory solutions
is given by $\{x^i(t)\ :\ i=\beta,\gamma,\delta\}$.
\item[iii)] All Caratheodory solutions satisfy P1-2), while P3) fails, even for the modified model \eqref{e:HK1}.
In particular the final clusters' number and positions depend on the solution:
3 clusters for $x^\alpha$, 1 cluster for $x^\beta$ and 2 clusters for
$x^\gamma$ (in positions $-\frac12$ and $1$) and
$x^\delta$ (in positions $-1$ and $\frac12$).
\end{itemize}
\end{prop}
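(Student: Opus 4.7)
The plan is to exploit that the initial datum $(-1,0,1)$ of \eqref{e:E} is an equilibrium of the original dynamics \eqref{eq:HK1d3a} (every pairwise distance equals $1$, so each $\chi_{r<1}$ vanishes), but is \emph{not} an equilibrium of the modified dynamics \eqref{e:HK1}, for which a direct calculation yields $f(-1,0,1)=(1,0,-1)$. This single observation drives the dichotomy between (i) and (ii). Near $(-1,0,1)$ the pair $(1,3)$ remains at distance strictly above $1$, so only the two switching surfaces $\{|x_1-x_2|=1\}$ and $\{|x_2-x_3|=1\}$ are relevant, and they cut a neighborhood into four open cells.

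For part (i), direct substitution shows that each of $x^\alpha,x^\beta,x^\gamma,x^\delta$ and every concatenation in \eqref{e-Csol} is a Caratheodory solution. For the converse, given any Caratheodory solution $x(\cdot)$, I set $\bar t:=\inf\{t>0:x(t)\neq(-1,0,1)\}$; if $\bar t=+\infty$ then $x=x^\alpha$. When $\bar t<+\infty$, the first step is to rule out any return to $(-1,0,1)$ at a time $t^*>\bar t$. I combine two invariants: the width $W(t)=\max_i x_i(t)-\min_i x_i(t)$ is non-increasing by Proposition \ref{p-contractive}, and the barycenter is constant and equal to $0$ by the argument in Proposition \ref{prop:HK-Car}. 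Together with $x_i(t)\in[-1,1]$ (again contractivity), these force the multiset $\{x_1(t),x_2(t),x_3(t)\}$ to equal $\{-1,0,1\}$ whenever $W(t)=2$; a hypothetical return at $t^*$ would give $W\equiv 2$ on $[\bar t,t^*]$, and since three distinct real values cannot be continuously permuted without a collision, the labeling would stay fixed, yielding $x\equiv(-1,0,1)$ on $[\bar t,t^*]$, contradicting the definition of $\bar t$. Next, among the four open cells adjacent to $(-1,0,1)$, the one in which both relevant distances exceed $1$ is inaccessible: under the ordering $x_1<x_2<x_3$ (preserved by continuity near the initial datum) it would force $x_3-x_1=(x_2-x_1)+(x_3-x_2)>2$, in contradiction with contractivity. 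Only the three cells corresponding to $x^\beta,x^\gamma,x^\delta$ remain, and on each of them \eqref{eq:HK1d3a} is linear and Lipschitz; local uniqueness combined with the explicit formulas \eqref{eq:exp-conv-sol-3a}--\eqref{eq:exp-conv-sol-3a-23} identifies $x(\cdot)$ on $[\bar t,+\infty)$ with one of $x^\beta,x^\gamma,x^\delta$ translated by $\bar t$, completing the classification.

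For part (ii), the identity $f(-1,0,1)=(1,0,-1)\neq 0$ for the modified model means that any Caratheodory solution equal to $(-1,0,1)$ on a positive-measure set would satisfy both $\dot x=0$ there (by absolute continuity) and $\dot x=f=(1,0,-1)$, a contradiction. This eliminates $x^\alpha$ and every member of \eqref{e-Csol} with $\bar t>0$. On the other hand, each of $x^\beta,x^\gamma,x^\delta$ coincides with the original-model expression for $t>0$, satisfies the modified equation for all $t>0$, and only possibly fails at $t=0$, a set of measure zero; hence they remain Caratheodory solutions, and the classification of (i) adapted to \eqref{e:HK1} shows that no others exist.

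Part (iii) is finally bookkeeping. Direct checks on each $x^i$ give $x^i_1+x^i_2+x^i_3\equiv 0$, hence P1 for every Caratheodory solution. Reading off the asymptotic limits $(-1,0,1)$, $(0,0,0)$, $(-\tfrac12,-\tfrac12,1)$, $(-1,\tfrac12,\tfrac12)$ for $x^\alpha,x^\beta,x^\gamma,x^\delta$ respectively gives P2 with $3,1,2,2$ clusters. Finally, P3 fails outright since the single initial datum \eqref{e:E} admits solutions with different cluster counts, and this defect persists for \eqref{e:HK1} because $x^\beta,x^\gamma,x^\delta$ alone already produce three distinct limits. The hardest step is the non-return argument in (i): genuinely excluding oscillations back to $(-1,0,1)$ or dwelling on a switching surface appears to require combining contractivity with barycenter preservation rather than either alone.
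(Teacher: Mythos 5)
Your verification that the listed trajectories are Caratheodory solutions, your equilibrium/non-equilibrium dichotomy for part (ii), and your bookkeeping for part (iii) are fine, and the corner non-return argument (contractivity of the width plus barycenter invariance forcing the multiset $\{-1,0,1\}$) is a nice observation. But the converse direction of (i) has a genuine gap exactly at the step you yourself flag as hardest. From ``$x$ leaves $(-1,0,1)$ at time $\bar t$'' you jump to ``$x$ enters one of the three admissible open cells and, by local Lipschitz uniqueness, coincides with $x^\beta,x^\gamma$ or $x^\delta$ shifted by $\bar t$.'' That requires an interval $(\bar t,\tau)$ on which the solution lies in a \emph{single} open cell, and nothing in your argument provides it: your non-return lemma only excludes returning to the corner point $(-1,0,1)$, not (a) a single distance dipping below $1$ and later climbing back to or above $1$ (oscillatory switching, possibly with switching times accumulating at $\bar t$, so that no such interval exists), nor (b) the solution spending a set of positive measure on one switching surface away from the corner, e.g.\ $|x_1-x_2|=1$ with $|x_2-x_3|<1$. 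Case (b) is not a vacuous worry: that surface carries the sliding trajectories \eqref{eq:Fil-sol-3ag-R}, which are Filippov but not Caratheodory solutions, and the whole point of the classification is to exclude them for Caratheodory solutions by an actual argument.

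The paper closes precisely this hole with three quantitative claims that have no analogue in your proposal: Claim a) once $|x_1-x_2|<1$ it stays below $1$ forever (via the differential inequality $\frac{d}{dt}(x_2-x_1)\leq -2(x_2-x_1)+1<0$ at any a.e.\ point where $x_2-x_1\in(\tfrac12,1)$ with positive derivative, which is a measure-theoretic argument robust to the pathological sets above); Claim b) once a distance exceeds $1$ strictly, the relevant agent is frozen and the distance increases, so the pair never reactivates; and Claim c) if both activation times $t_{12},t_{23}$ are finite and positive they must coincide (this is what kills staggered activation and the sliding regime). With these, the classification by the values of $t_{12},t_{23}$ is immediate, and the same claims carry over to the modified model \eqref{e:HK1}, whereas your part (ii) sentence ``the classification of (i) adapted to \eqref{e:HK1} shows that no others exist'' inherits the same gap. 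To repair your proof you would need to prove monotonicity of the interaction pattern (the analogue of Claims a) and b)) before invoking cell-wise uniqueness; contractivity and barycenter conservation alone do not yield it.
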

See a representation of Caratheodory solutions in Figure \ref{fig-CarE}.
\begin{figure}[h]
\begin{center}
\includegraphics[scale=.35]{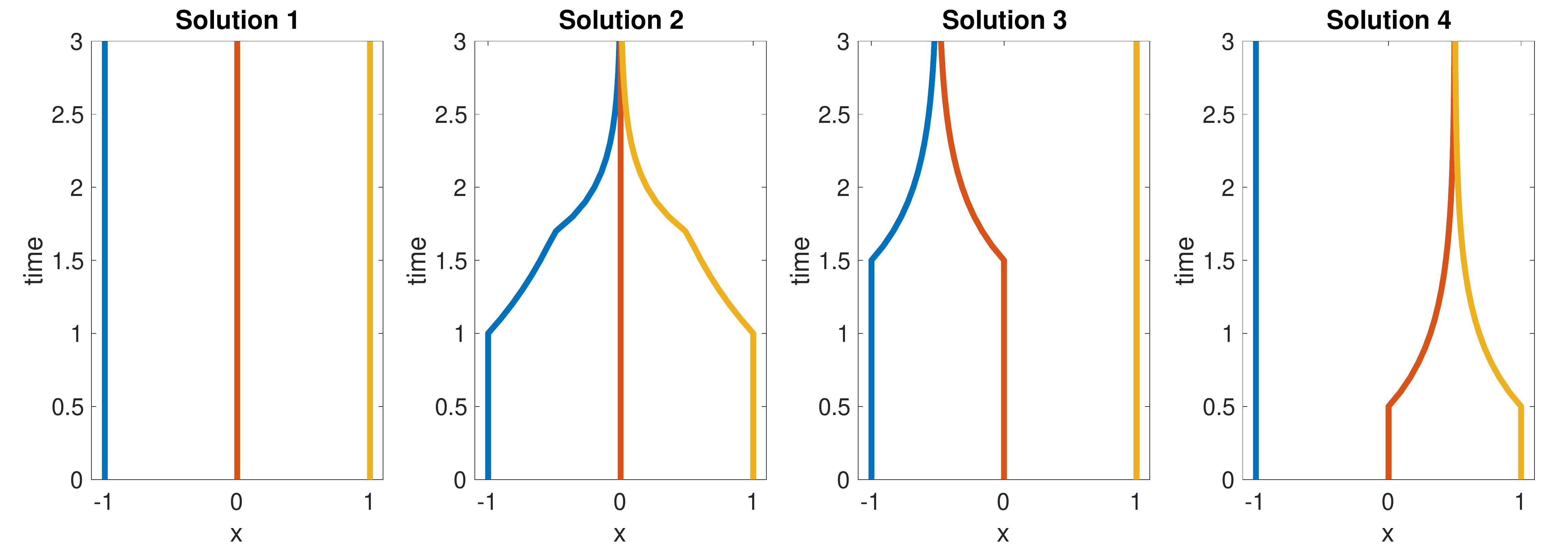}
\caption{Caratheodory solutions for Initial Condition E.}
\label{fig-CarE}
\end{center}
\end{figure}

\begin{proof} It is easy to prove that all trajectories given in \eqref{e-Csol} are Caratheodory solutions of \eqref{eq:HK1d3a} with initial data \eqref{e:E}. We now prove that there exists no other solution. With this goal, we first prove the following two claims:
\begin{description}
\item[Claim a)] If there exists $\bar{t}$ such that $|x_2(\bar{t})-x_1(\bar{t})|<1$ then
for all $t\geq\bar{t}$ we have $|x_2(t)-x_1(t)|<1$. The same result holds for $x_2$ and $x_3$.
\end{description}

We prove the claim by contradiction. Assume that $|x_2(\bar{t})-x_1(\bar{t})|<1$ and $|x_2(\tilde t)-x_1(\tilde t)|\geq 1$ for some $\tilde t>\bar t$. 
Since $x_1$ and $x_2$ are differentiable almost everywhere there exists $t\in (\bar t,\tilde t)$ such that $|x_2(t)-x_1(t)|\in\left(\frac12,1\right)$, and $|x_1(\cdot)-x_2(\cdot)|$
is differentiable at $t$ with strictly positive derivative. Then, it holds:
$$
\frac{d}{dt}(x_2(t)-x_1(t))=-2(x_2(t)-x_1(t))+\chi_{|x_3(t)-x_2(t)|<1} (x_3(t)-x_2(t))
\leq -2(x_2(t)-x_1(t))+1<-2\frac12 +1=0.
$$
This leads to a contradiction. The claim is proved.
\begin{description}
\item[Claim b)] If there exists $\bar{t}$ such that  $|x_2(\bar{t})-x_1(\bar{t})|>1$ then
for all $t\geq\bar{t}$ we have $x_1(t)=x_1(\bar{t})$ and
similarly for $x_2$ and $x_3$.
\end{description}
The proof is easy: notice that $\dot{x}_1=0$ and $\dot{x}_2\geq 0$ for almost every $t\geq\bar{t}$. This implies that $|x_2(\bar{t})-x_1(\bar{t})|$ is increasing, thus the claim is proved.

We now define the times:
\[
t_{12}=\inf \{t: |x_1(t)-x_2(t)|<1\},\qquad
t_{23}=\inf \{t: |x_2(t)-x_3(t)|<1\},
\]
possibly equal to $+\infty$ when sets are empty, and prove the following:
\begin{description}
\item[Claim c)]If $0<t_{12},t_{23}<+\infty$ then $t_{12}=t_{23}$.
\end{description}
Assume, by contradiction, that $t_{12}<t_{23}$ (the other case
being similar). On the interval $[t_{12},t_{23}]$
we have $|x_2(t)-x_3(t)|\geq 1$ by definition of $t_{23}$. Claim b) ensures that $|x_2(t)-x_3(t)|=1$ for all $t\in [t_{12},t_{23}]$,
otherwise we would have  $t_{23}=+\infty$.

Take now the definition of $t_{12}$ and apply Claim a), that ensures that $|x_2(t)-x_1(t)|<1$ for all $t>t_{12}$. Merging it with $|x_2(t)-x_3(t)|=1$ on the interval $[t_{12},t_{23}]$, we have both $\dot x_2(t)<0, \dot{x}_3(t)=0$. This contradicts $|x_2(t)-x_3(t)|=1$  on the same interval. This proves the claim.

We are now ready to prove i). If $t_{12}=t_{23}<+\infty$ then by Claim a), the solution is constant on $[0,t_{12}]$, then given by $x^\beta (t-t_{12})$ on $[t_{12},+\infty[$.
If $t_{12}<t_{23}=+\infty$, then the solution is constant on $[0,t_{12}]$ then given by $x^\gamma(t-t_{12})$ on $[t_{12},+\infty[$.
Similarly, if $t_{23}<t_{12}=+\infty$ then the solution is constant on $[0,t_{23}]$ then given by $x^\delta(t-t_{23})$ on $[t_{12},+\infty[$.
In the last case $t_{12}=t_{23}=+\infty$, from Claims a) and b) we deduce 
$\dot{x}_1\equiv \dot{x}_2\equiv \dot{x}_3\equiv 0$, thus the solution
is the constant one $x^\alpha$. This proves i).

To prove ii), it is enough to notice that the constant solution is no more a Caratheodory solution. Finally, iii) follows directly by i) and ii).
\end{proof}

\begin{remark} One might expect that solutions to \eqref{eq:HK} with $\phi_{ij}(r)=1$ exhibit uniform exponential convergence to their limit, in the following sense: there exist $C,k>0$ such that for any trajectory $x(t)$ it holds 
\begin{equation}\label{e-exp}
\|x(t)\|\leq Ce^{-kt} \|x(0)\|.
\end{equation}
Indeed, beside the points in which $x(t)$ crosses $\mathcal{M}$, the dynamics is linear. Yet, exponential convergence does not hold for Caratheodory solutions, as Proposition \ref{p-Cprop} shows. Indeed, given the initial condition \eqref{e:E}, one can wait an arbitrarily long time $\bar t$ before starting exponential convergence to $0$. Thus, a global constant $C$ in \eqref{e-exp} does not exist.\\
This also shows that Filippov-Krasovskii solutions do not satisfy exponential convergence either, due to Proposition \ref{p-inclusion}.
\end{remark}

\subsubsection{Filippov solutions}
We now study the family of Filippov solutions with initial data \eqref{e:E}. 
Besides Caratheodory solutions studied above, we look for solutions $x(\cdot)$
such that $x_2(t)=x_1(t)+1$ and $x_2(t)> x_3(t)-1$
for all times $t>0$. If such a Filippov solution exists on an interval $[0,T]$, 
then $x(\cdot)$ must satisfy
\begin{equation}
\dot{x}_1(t) = \alpha(t) ,\
\dot{x}_2(t)=- \alpha(t) +(x_3(t)-x_2(t)),\
\dot{x}_3(t)=(x_2(t)-x_3(t))
\end{equation}
for some measurable functions $\alpha :[0,T]\to [0,1]$. The condition $x_2(t)=x_1(t)+1$ for all times implies
\begin{equation}
\alpha(t)=\frac{x_3(t)-x_2(t)}{2}.
\end{equation}
Defining $y(t)=x_3(t)-x_2(t)$, we get $\dot{y}(t)=-\frac{3}{2}y(t)$,
thus $y(t)=e^{-\frac{3}{2}t}$ and the solution is given by\FRc{soluzione corretta}:
\begin{equation}\label{eq:Fil-sol-3ag-R}
x_1(t)=-\frac{2}{3}-\frac{1}{3}e^{-\frac{3}{2}t},\
x_2(t)=\frac{1}{3}-\frac{1}{3}e^{-\frac{3}{2}t},\
x_3(t)= \frac{1}{3}+\frac{2}{3}e^{-\frac{3}{2}t}.
\end{equation}
We get the following:
\begin{prop} \label{p-Fsol}
Consider the Cauchy problem \eqref{eq:HK1d3a} with initial data \eqref{e:E}. The set of Fillippov solutions contains the set of Caratheodory solutions
and the following two-parameters families.\\
Given $0\leq t_1<t_2\leq +\infty$ define a solution $z(\cdot)$ as follows.
On the interval $[0,t_1]$ the solution is constant, on 
the interval $[t_1,t_2]$ the solution is given by $z(t)=x(t-t_1)$ for
$x(\cdot)$ given by \eqref{eq:Fil-sol-3ag-R}, and 
on the interval $[t_2,+\infty[$ the solution satisfies $z_1(t)\equiv z_1(t_2)$,
while $\dot{z}_2(t)=z_3(t)-z_2(t)=-\dot{z}_3(t)$.
The solution $z$ converge to an asymptotic state with the first agent
at $\bar{x}_1 \in [-1,-\frac{2}{3}]$ and the other two at
$\bar{x}_2 = -\frac{\bar{x}_1}{2}$.\\
Given $0\leq t_1<t_2\leq +\infty$ define a solution $w(\cdot)$ as follows.
On the interval $[0,t_1]$ the solution is constant, on 
the interval $[t_1,t_2]$ the solution is given by $w(t)=x(t-t_1)$ for
$x(\cdot)$ given by \eqref{eq:Fil-sol-3ag-R}, and 
on the interval $[t_2,+\infty[$ all agents interact converging to zero.\\
Similarly we can define other two-parameters families by symmetry
exchanging the roles of agent $1$ and $3$.\\
\end{prop}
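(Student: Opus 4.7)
The plan is to verify, for each proposed family, that the trajectory is absolutely continuous and satisfies the differential inclusion $\dot x(t)\in F(x(t))$ for almost every $t$, where $F$ is the Filippov multifunction with explicit form \eqref{e:HKF}. Each trajectory is piecewise smooth with at most two breakpoints $t_1,t_2$, so it suffices to check the inclusion on each of the three smooth pieces. On the constant piece $[0,t_1]$ the initial configuration has $\|x_1-x_2\|=\|x_2-x_3\|=1$ and $\|x_1-x_3\|=2$; a direct application of \eqref{e:HKF} gives $F_1=[0,1]$, $F_2=[-1,1]$, $F_3=[-1,0]$, each containing $0$, so the constant piece is admissible.

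The core of the argument is the middle piece $[t_1,t_2]$. Along \eqref{eq:Fil-sol-3ag-R} one has $z_2-z_1\equiv 1$ and $z_3-z_2=e^{-3(t-t_1)/2}\in(0,1]$, so the active configuration has $\|z_1-z_2\|$ on the boundary and $\|z_2-z_3\|<1$ for $t>t_1$, giving $F_1=[0,1]$, $F_2=[z_3-z_2-1,\, z_3-z_2]$, and $F_3=\{z_2-z_3\}$. Differentiating \eqref{eq:Fil-sol-3ag-R} yields $\dot z_1=\dot z_2=\tfrac12 e^{-3(t-t_1)/2}$ and $\dot z_3=-e^{-3(t-t_1)/2}$, and each lies in the respective component of $F$. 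The specific coefficient $\tfrac12$ is exactly the value of the Filippov selector $\alpha_2$ that pins $z_2-z_1$ at $1$: this selection is identified by imposing the constraint $\dot z_1=\dot z_2$ in \eqref{e:HKF}, which forces $\alpha_2\,\phi_{12}(1)=\tfrac12(z_3-z_2)$ and yields the linear ODE $\dot y=-\tfrac32 y$ for $y=z_3-z_2$, whose solution gives \eqref{eq:Fil-sol-3ag-R}.

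For the final piece the two families diverge. For $z$, the prescribed $\dot z_1=0$ and $\dot z_2=z_3-z_2>0$ at $t=t_2^+$ push the configuration immediately into $\|z_1-z_2\|>1$, $\|z_2-z_3\|<1$, $\|z_1-z_3\|>1$, under which $F$ collapses to a singleton coinciding with the prescribed right-hand side, so the piece is even a classical solution of a decoupled linear ODE. The asymptotic state is then $\bar x_1=z_1(t_2)=-\tfrac23-\tfrac13 e^{-3(t_2-t_1)/2}$ and $\bar x_2=\bar x_3=\tfrac13+\tfrac16 e^{-3(t_2-t_1)/2}=-\bar x_1/2$; as $t_2-t_1$ ranges over $(0,+\infty]$ the value $\bar x_1$ sweeps $[-1,-\tfrac23]$, as claimed. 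For $w$, the prescribed dynamics on $[t_2,+\infty)$ is simply the Caratheodory trajectory of \eqref{eq:HK1d3a} starting from $w(t_2)$ under the full interaction rules; its existence is guaranteed by the results of Section \ref{s-existence}, it is a Filippov solution by Proposition \ref{prop:HK-Car}, and convergence to $0$ follows from barycenter invariance (Proposition \ref{prop:FK-HK}) together with the contractivity of Proposition \ref{p-contractive}. The symmetric two-parameter families are obtained by the reflection $x\mapsto -x$ exchanging the roles of agents $1$ and $3$. The main obstacle is the middle-piece computation above, which requires recognising the correct ansatz $z_2-z_1\equiv 1$, deriving the corresponding linear ODE, and then explicitly checking that the resulting derivatives fall inside the multivalued set $F$; once that is in hand, the remaining pieces are either trivial or classical linear ODEs.
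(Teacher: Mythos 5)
Your verification is correct, and it proves exactly what the proposition asserts (the containment of the stated families in the set of Filippov solutions), but it follows a different route from the paper's proof. You argue by sufficiency: you check the inclusion $\dot z(t)\in F(z(t))$ piece by piece, using the product structure of \eqref{e:HKF} to compute $F_1=[0,1]$, $F_2=[z_3-z_2-1,z_3-z_2]$, $F_3=\{z_2-z_3\}$ on the middle arc and confirming $\dot z_1=\dot z_2=\tfrac12(z_3-z_2)$, $\dot z_3=-(z_3-z_2)$ lie in these sets; this mirrors the derivation of the selector $\alpha=(x_3-x_2)/2$ that the paper carries out in the text \emph{before} the proposition. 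The paper's proof of the proposition itself instead goes in the necessity direction: it extends Claims a) and b) of Proposition \ref{p-Cprop} to Filippov solutions and shows that whenever $t_{12}\neq t_{23}$ the trajectory on the intermediate interval \emph{must} coincide with \eqref{eq:Fil-sol-3ag-R}, so that the stated families (together with the Caratheodory ones) describe the structure of all Filippov solutions, the containment then following from the earlier construction. Your approach is more elementary and self-contained for the stated containment; the paper's buys in addition a classification of the Filippov solution set, which is what makes "the other claims easily follow". Two small points to tighten: within the family $0\le t_1<t_2\le+\infty$ the attained values are $\bar x_1\in\,]-1,-\tfrac23]$ (the endpoint $-1$ is only a limit as $t_2\to t_1$), which is consistent with the membership claim $\bar x_1\in[-1,-\tfrac23]$ but not a full sweep of the closed interval; and for the $w$-family, barycenter invariance plus contractivity alone do not force collapse to a single cluster --- you should add that on $[t_2,+\infty[$ the sum of consecutive gaps satisfies $\tfrac{d}{dt}(d_{12}+d_{23})\le-(d_{12}+d_{23})$, so all mutual distances tend to $0$ and the limit is the (invariant) barycenter $0$. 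Your omission of the clause that Filippov solutions contain Caratheodory ones is harmless, since that is Proposition \ref{p-inclusion}.
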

\begin{proof}
Claims a) and b) of Proposition \ref{p-Cprop} hold true for Filippov solutions
using the same proof. With notations as in Claim c), assume
that $t_{12}<t_{23}$ then again we conclude $|x_2(t)-x_3(t)|=1$
and $|x_1(t)-x_2(t)|<1$ on $[t_{12},t_{23}]$. Therefore the solution
on the interval $[t_{12},t_{23}]$ is given by $x(t-t_{12})$, with
$x(\cdot)$ given by \eqref{eq:Fil-sol-3ag-R}.
The other claims easily follow.
\end{proof}

In Figure \ref{fig-FilE}, we depict representatives for Filippov solutions described in Proposition \ref{p-Fsol}.

\begin{figure}[h]
\begin{center}
\includegraphics[scale=.4]{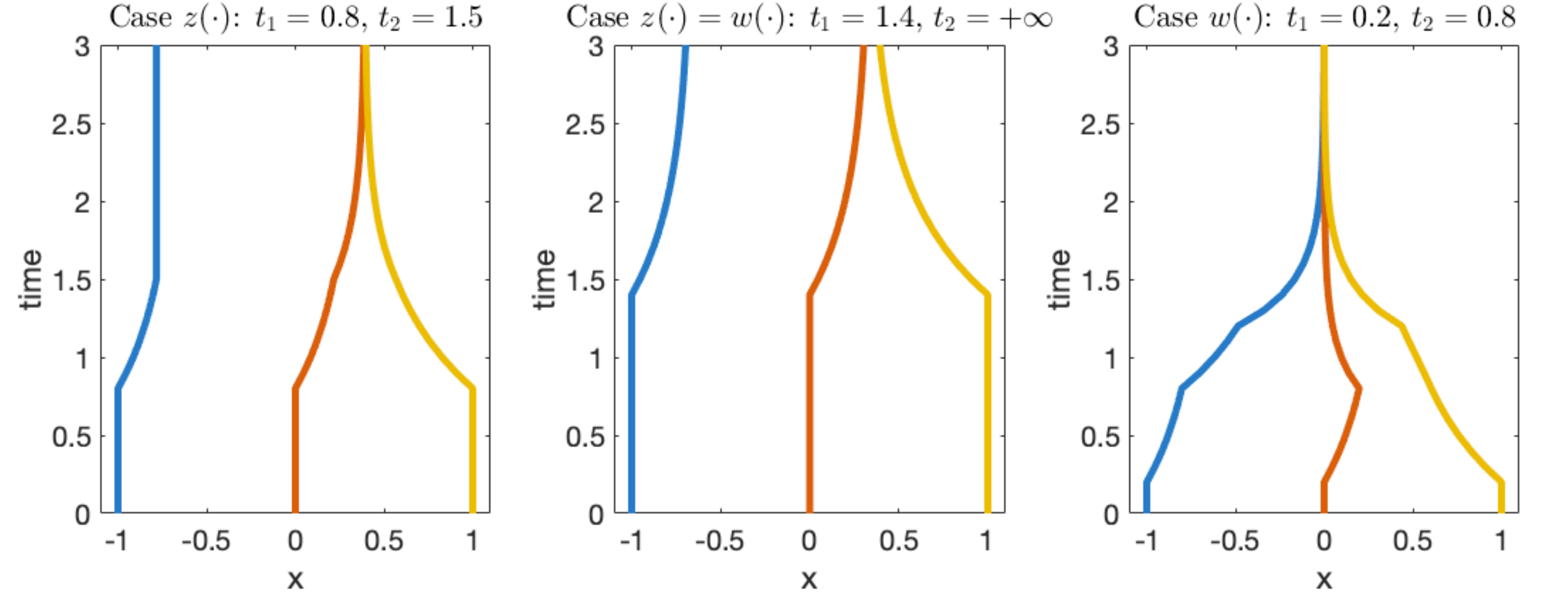}
\caption{Filippov solutions for Initial Condition E.}
\label{fig-FilE}
\end{center}
\end{figure}

\subsubsection{Stratified solutions}
In this Section we focus on stratified solutions. The latter are unique
for a given stratification, but the stratification is not unique.
In particular, the final number of clusters is dependent 
on the initial datum but also on the chosen stratification.
Here, we build a stratification ensuring the minimal number of clusters
in the final configuration for any initial datum.

The construction of the stratification is  based on a careful analysis of singularities. Since stratified solutions satisfy the equation \eqref{eq:HK1d3a} for almost
every time, then the barycenter $\bar{x}$ is invariant. By eventually applying a translation, we assume $\bar{x}=0$ from now on. It thus holds
\begin{equation}\label{eq:x3-as-funct}
x_3(t)=-x_1(t)-x_2(t).
\end{equation}
The problem of finding a stratification can be solved on 
$\R^2$, as the stratification in $\R^3$ can be obtained by using \eqref{eq:x3-as-funct}. Define the following lines for $i,j\in\{1,2,3\}$, $i\not= j$:
\begin{equation}\label{eq:lines}
l_{ij}^{\pm}=\{(x_1,x_2):x_i=x_j\pm 1\},
\end{equation}
where we used the equality \eqref{eq:x3-as-funct}: this means that $l^{\pm}_{32}$, that are the sets $x_3=x_2\pm 1$, is given
by $x_2=-\frac{x_1}{2}\mp \frac{1}{2}$. Similarly, $l^{\pm}_{31}$ are given
by $x_1=-\frac{x_2}{2}\mp \frac{1}{2}$.
These lines meet at 12 points, with 4 on the coordinate axes. See Figure \ref{fig:strat}.

The following points belong to the first orthant:
\begin{equation}\label{eq:int-point-1}
l_{21}^+\cap l_{31}^- = (0,1),\quad
l_{21}^-\cap l_{32}^- = (1,0),\quad
l_{32}^-\cap l_{31}^- = \left(\frac{1}{3},\frac{1}{3}\right).
\end{equation}
Points in the third orthant are obtained
by symmetry with respect to the origin.\\
The following points belong to the second orthant (two lie on axes, thus they are shared with other orthants):
\begin{equation}\label{eq:int-point-2}
l_{21}^+\cap l_{31}^- = (0,1),\quad
l_{21}^+\cap l_{32}^+ = (-1,0),\quad
l_{21}^+\cap l_{32}^- = \left(-\frac13,\frac23\right),\quad
l_{21}^+\cap l_{31}^+ = \left(-\frac23,\frac13\right),\quad
l_{32}^-\cap l_{31}^+ = (-1,1).
\end{equation}
Points in the fourth orthant are obtained
by symmetry with respect to the origin.\\
We are now ready to define the strata of our stratification.
\begin{definition}\label{def:strat-3a-1d}
The strata $M^0_1,\ldots,M^0_{12}$
of dimension $0$ are given by the points
\eqref{eq:int-point-1}, \eqref{eq:int-point-2}
and their symmetric with respect to the origin.\\
The strata $M^1_1,\ldots,M^1_{30}$ of dimension $1$
are given by the connected components of the lines
defined in \eqref{eq:lines} after removing the strata of
dimension $0$.\\
The strata $M^2_1,\ldots,M^2_{19}$ are given
by the connected components of $\R^2$ after
removing the strata of dimension $0$ and $1$.\\
The strata of dimension $0$ and $1$ are all
of type II. Define $\Sigma(M^0_i)=M^2_j$,
where $M^2_j$ is such that 
$M^0_i\subset\partial M^2_j$ and $M^2_j$ is the 
stratum containing the point of least norm among those
with such property. Similarly,
define $\Sigma(M^1_i)=M^2_j$,
where $M^2_j$ is such that 
$M^1_i\subset\partial M^2_j$ and $M^2_j$ is the 
stratum containing the point of least norm among those
with such property.
\end{definition}
We refer the reader to Figure \ref{fig:strat} for a graphical illustration of the stratification and the dynamics in some of the strata.
\begin{figure}[h]
\begin{center}
\includegraphics[scale=.5]{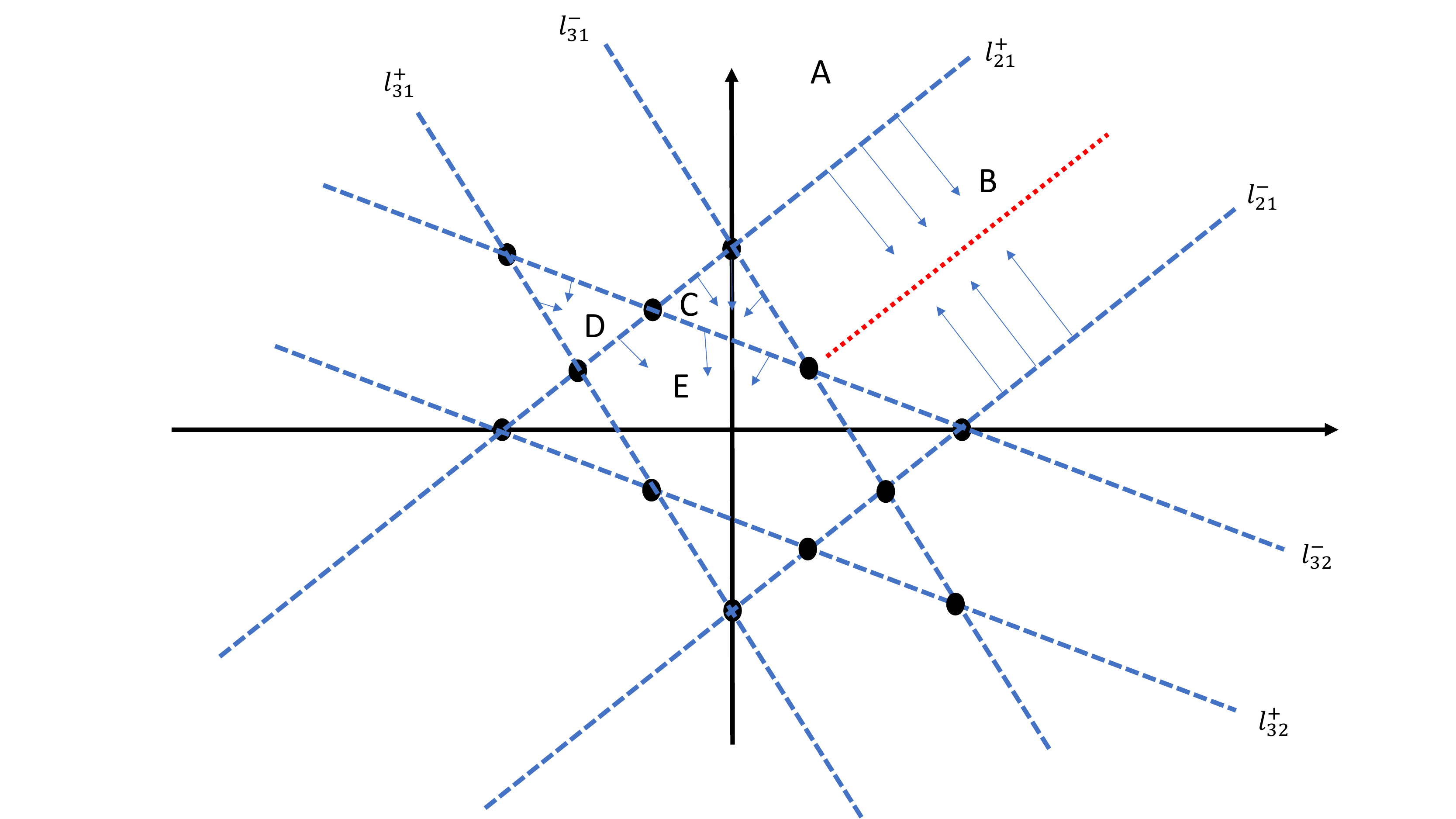}
\caption{Graphical representation of the stratification
given in Definition \ref{def:strat-3a-1d}.}
\label{fig:strat}
\end{center}
\end{figure}

\begin{prop}
Consider the Cauchy problem \eqref{eq:HK1d3a}
and the stratification defined in Definition \ref{def:strat-3a-1d}.
Then, stratified solutions are unique and converge
asymptotically to a configuration with the minimal number of clusters.
\end{prop}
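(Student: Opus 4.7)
My plan is to address the two assertions separately. Uniqueness of the stratified solution for a fixed stratification is already guaranteed by the general existence/uniqueness statement for stratified solutions proved earlier in this section, so no additional work is needed there. The real content of the proposition is the minimality of the asymptotic cluster count, and for this my strategy is to combine barycenter invariance, monotone enrichment of the interaction graph, and an explicit verification that the map $\Sigma$ of Definition \ref{def:strat-3a-1d} always drives the trajectory toward the richest available interaction configuration.

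To set the stage, I would first invoke property P1, which holds for stratified solutions since they are Caratheodory by Proposition \ref{p-inclusion}, in order to restrict to the plane $x_3 = -x_1 - x_2$. Then I would classify each of the $19$ two-dimensional strata $M^2_j$ by its interaction graph $G_j$ on $\{1,2,3\}$, i.e.\ by which pairs $\{i,k\}$ satisfy $|x_i-x_k|<1$ in that cell. Inside every such stratum the system reduces to a fixed linear ODE, and the asymptotic configuration is precisely the partition of $\{1,2,3\}$ into the connected components of $G_j$, each converging to its internal barycenter. Claims~(a) and~(b) from the proof of Proposition \ref{p-Cprop} apply verbatim to stratified solutions, so along any stratified trajectory an edge of the interaction graph, once gained, is never lost, and a pair of agents once separated by distance greater than one remains frozen apart forever. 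Thus the interaction graph evolves monotonically in its edge set, and the final cluster count is determined entirely by the terminal graph.

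The key step, and also the main obstacle I expect, is to verify that the least-norm prescription defining $\Sigma$ matches the rule ``adjacent $2$-dim stratum whose interaction graph has the most edges''. Since the origin is the preserved barycenter and cells closer to the origin correspond to configurations where more pairwise distances are smaller than one, this identification is geometrically natural, but the proof requires a finite case analysis at each of the $12$ zero-dimensional strata listed in \eqref{eq:int-point-1}--\eqref{eq:int-point-2} and each of the $30$ one-dimensional strata. The $S_3$-symmetry from agent relabeling and the $\Z_2$-symmetry from reflection through the origin collapse this to a short list of representatives; the archetypal case is the symmetric datum \eqref{e:E}, where Proposition \ref{p-Cprop} already shows that $\Sigma$ selects the trajectory $x^\beta$, delivering the single-cluster minimum. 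The remaining representatives are handled by the same direct computation, combined with the asymptotics described for IC-A through IC-D. Monotone enrichment of the interaction graph along the trajectory and optimal selection at every transition then jointly force convergence to the minimal admissible number of clusters.
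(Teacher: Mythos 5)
Your handling of uniqueness is fine (it is indeed already covered by the earlier existence/uniqueness proposition for stratified solutions), and your overall plan --- reduce to the plane by barycenter invariance, then do a symmetry-reduced case analysis over the strata of Definition \ref{def:strat-3a-1d} --- is structurally the same as the paper's. But two steps are genuinely problematic. First, Claims a) and b) of Proposition \ref{p-Cprop} do \emph{not} transfer verbatim: they were proved for the special datum \eqref{e:E}, where agents $1$ and $3$ can never interact. Claim a) (an edge, once gained, is never lost) does generalize with essentially the same estimate, but your statement that ``a pair of agents once separated by distance greater than one remains frozen apart forever'' is false in general: for $x_1=0$, $x_2=0.6$, $x_3=1.2$ the pair $(1,3)$ starts at distance $1.2>1$, yet agent $2$ pulls the extremes together and all three merge, so that edge is created later. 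Monotone enrichment of the interaction graph survives (it needs only the generalized Claim a)), but the frozen-apart statement cannot be used --- and it is exactly the kind of statement your minimality argument would lean on.

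Second, the crux --- that the least-norm prescription in $\Sigma$ coincides with a ``most edges'' rule and that this greedy choice at each transition forces the \emph{globally} minimal number of clusters --- is asserted, not proved. Greedy selection at junctions does not by itself give global minimality, and trajectories starting in the open strata never meet a junction at all. What is actually needed, and what the paper's proof supplies by explicit computation of the linear dynamics in each two-dimensional region, is: (i) the interior flow analysis --- the bounded regions with two active pairs (regions C, D in the paper's notation) are transient and drain into the all-interacting region E, the one-pair regions B are positively invariant, and the no-interaction regions A are stationary; and (ii) the lower bounds --- from an A-region every solution (in any sense) is constant, so three clusters are forced, and from a B-region or an A/B-boundary one-dimensional stratum the isolated agent stays frozen while the pair's members move away from it, so two clusters are forced; this uses the ordering/barycenter argument on the line, not the false frozen-apart claim. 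Your appeal to ``the asymptotics described for IC-A through IC-D'' only partially covers this: those are specific initial configurations (IC-B does cover the interiors of C, D, E up to relabeling, IC-C/D cover some boundary strata), but they do not cover the strict one-pair regions B of the stratification, nor do they by themselves establish the lower bounds above. With items (i) and (ii) filled in, your proposal becomes essentially the paper's case-by-case proof.
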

\begin{proof}
Let us start by analyzing the dynamics on the strata of dimension two.\\
{\bf Case A)} There are six unbounded regions where $|x_i-x_j|>1$,
for all pairs  $i,j=1,\ldots,3$, $i\not= j$. See $A$ in Figure \ref{fig:strat}.
The stratified solutions on these regions are constant.\\
{\bf Case B)} There are other six unbounded regions where
$|x_i-x_j|<1$ for only one couple $(i,j)$, $i\not= j$.
The stratified solutions verify $x_i(t)-x_j(t)\to 0$,
while the remaining agent remains fixed. 
For the region $B$ in Figure \ref{fig:strat}, the dynamics satisfies:
\[
\dot{x}_1+\dot{x}_2=0,\quad
\dot{x}_1-\dot{x}_2=-2\, (x_1-x_2),
\]
thus all solutions
tend to the dotted (red) line $x_1=x_2$.\\
{\bf Case C)} There are four bounded regions intersecting
the coordinate axes but not containing the origin.
For the region marked as $C$ in Figure \ref{fig:strat},
the dynamics is given by:
\[
\dot{x}_1=-3\,x_1,\quad
\dot{x}_2=x_1-x_2,
\]
thus solutions exit towards the region marked $E$.\\
{\bf Case D)} There are two bounded regions not intersecting
the coordinate axis. 
For region $D$ in Figure \ref{fig:strat}
the dynamics is given by:
\[
\dot{x}_1=-2\,x_1-x_2,\quad
\dot{x}_2=-2\,x_2-x_1,
\]
and solutions exit towards the region marked $E$.\\
{\bf Case E)} Finally, there is a bounded region containing
the origin, named $E$ in Figure \ref{fig:strat},
where all agents are interacting and trajectories
converge to the origin.

The stratum of dimension one $M^1_i$ have trajectories
exiting to the stratum $\Sigma(M^1_i)$. For instance,
trajectories from $l_{21}^+\cap\partial B$ enter the
region $B$ and the same for $l_{21}^-\cap\partial B$.
Trajectories from $l_{31}^-\cap\partial B$
enter region $C$.\\
Similarly, trajectories from from $l_{32}^-\cap\partial D$ enter the region $D$ and the same for $l_{31}^+\cap\partial D$. Trajectories from $l_{21}^+\cap\partial D$
enter region $E$.

Finally, the stratum of dimension zero $M^0_i$ have trajectories exiting to the stratum $\Sigma(M^1_i)$.
For instance, the trajectory from $(0,1)$ enters region $C$
and the one from $(-1,1)$ enters region $D$.

We are now ready to complete the proof. 
For two dimensional strata the analysis is as follows.
For strata as $A$ there is uniqueness of trajectories
and three final clusters. For strata as $B$, trajectories
never exit the region and converge to two clusters.
For all other strata, trajectories converge to the origin, which corresponds to a unique cluster.\\
For one dimensional strata there are two cases.
If the stratum is at the boundary of a region of type $A$
and a region of type $B$, then trajectories enter the $B$ region and converge to two clusters.
For all other strata, trajectories converge to the origin, which corresponds to a unique cluster.\\
Finally, trajectories from zero dimensional strata converge to the origin, thus a unique cluster.\\
We conclude that all stratified trajectories converge to a configuration with the minimum number of clusters.
\end{proof}

\subsection{Many agents in $\R$: Caratheodory solution}
In this section, we briefly describe the combinatorial complexity of Caratheodory solutions 
for $N$ agents in $\R$
following the dynamics \eqref{eq:HK} with $\phi_{ij}=1$.
From Proposition \ref{prop:HK-Car} we know
that such solutions are also solutions in the sense of Fililppov
and Krasovskii.

Fix $N\in \mathbb{N}\setminus{0}$ and consider an initial condition such that:
\begin{equation}\label{eq:in-con-dist1}
x_{i+1}-x_{i}=1, \quad i=1,\ldots , N-1.
\end{equation}
Such initial conditions form a one-dimensional manifold
in $\R^N$. The results we state are valid for any permutation of the agents numbering, so will hold for the union of $N!$ one-dimensional manifolds.

To compute the combinatorics related to the number of solutions 
we need to introduce some notation. For the fixed number $N$ of agents, we define
the sets:
\begin{equation}
\Delta_1(N)=\{(n_1,\ldots,n_\ell): n_k\in\N, \sum_k n_k=N\},
\end{equation}
\begin{equation}
\Delta_2(N)=\{(n_1,\ldots,n_\ell): n_k\in\N, \sum_k n_k=N\
and\ n_k+n_{k+1}\geq 3\ for\ k=1,\ldots,\ell-1 \}.
\end{equation}
In other words, $\Delta_1(N)$ is formed by the ordered $\ell$-tuple of natural
numbers summing up to $N$, while $\Delta_2(N)$ has the further restriction that no two consecutive numbers are equal to $1$.\\
Given $(n_1,\ldots,n_\ell)\in\Delta_i(N)$, $i=1,2$, we define a partition 
$P=\{P_1,\ldots,P_\ell\}$ of $\{1,\ldots , N\}$ as follows:
\[
P_1:=\{1,\ldots,n_1\},\qquad\mbox{~and~}\qquad P_k=\left\{1+\sum_{h=1}^{k-1} n_h,\ldots,  \sum_{h=1}^{k} n_h  \right\}, \mbox{~~for~}k=2,\ldots \ell
\]
In other words, $P_{k+1}$ are the $n_{k+1}$ numbers following those in $P_1\cup \cdots \cup P_k$.

The corresponding solutions are described in the next propositions. See also a representation in Figure \ref{fig-CarN}.
\begin{figure}[h]
\begin{center}
\includegraphics[scale=.35]{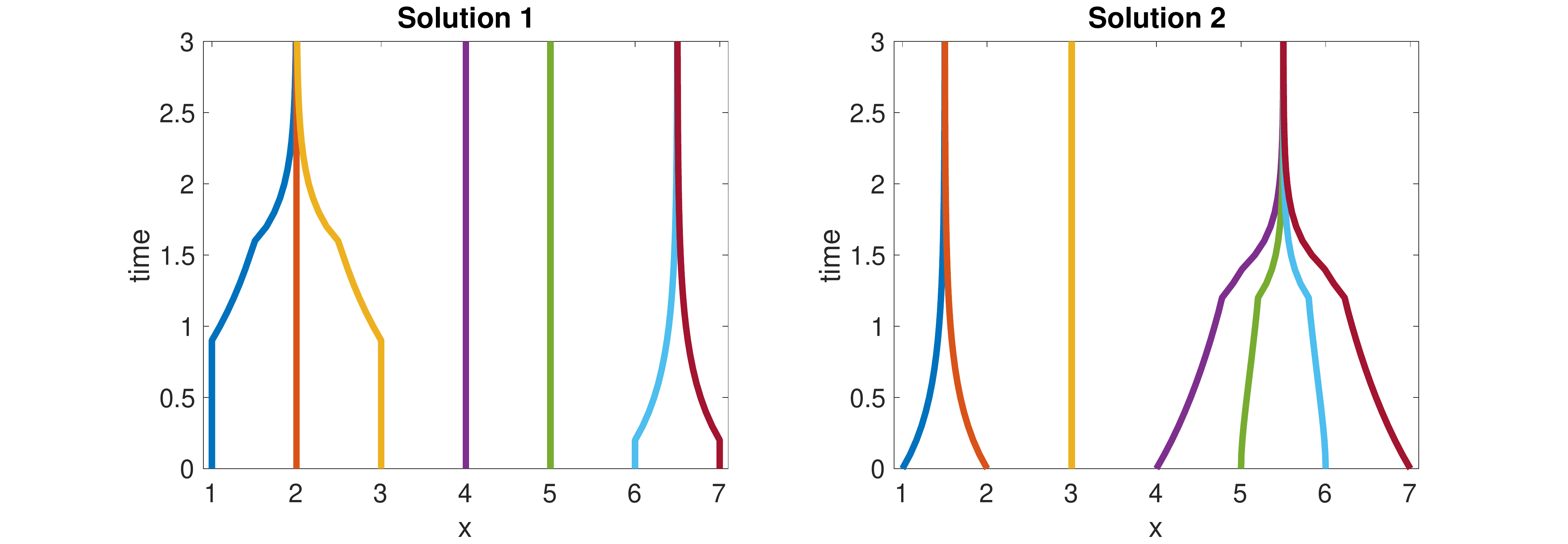}
\caption{Caratheodory solutions corresponding to $(3,1,1,2)\in\Delta_1(7)\setminus\Delta_2(7)$ (Left, see Proposition \ref{prop:HK-multi-R}) and $(2,1,4)\in\Delta_2(7)$ (Right, see Proposition \ref{prop:HK-multi-R-ineq}).}
\label{fig-CarN}
\end{center}
\end{figure}

\begin{prop}\label{prop:HK-multi-R}
Consider the ODE \eqref{eq:HK} with $n=1$, $\phi_{ij}=1$ and an initial condition satisfying \eqref{eq:in-con-dist1}, then the following holds.
For every $(n_1,\ldots,n_\ell)\in\Delta_1(N)$ there exists an 
$\ell$-dimensional parametrized family of distinct Caratheodory solutions converging to a limit $x^{\infty}$ such that $x^{\infty}_i=x^{\infty}_j$
for every $i,j\in P_k$, $k=1,\ldots,\ell$.
\end{prop}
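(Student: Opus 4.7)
The plan is to construct, for each composition $(n_1,\ldots,n_\ell)\in\Delta_1(N)$, a family of Caratheodory solutions parameterized by $\ell$ switching times $(\bar t_1,\ldots,\bar t_\ell)\in[0,+\infty)^\ell$, one per cluster. Each solution keeps the agents of $P_k$ frozen at their initial positions for $t\leq\bar t_k$ and, for $t>\bar t_k$, evolves them according to the HK dynamics of the subsystem restricted to the agents in $P_k$. The limit as $t\to+\infty$ will be the configuration in which every cluster has collapsed to its own barycenter $\bar x_k=\tfrac{1}{n_k}\sum_{i\in P_k}x_i(0)$.

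For fixed parameters $(\bar t_1,\ldots,\bar t_\ell)$ I would define $x(\cdot)$ as above and verify it is an admissible Caratheodory solution of the full $N$-agent system. The candidate is absolutely continuous by construction (each piece is a Lipschitz HK trajectory), so the real issue is that the equation holds for almost every $t$. The crucial observation is a separation property: at every $t\ge 0$ and every $k$, one has $x_{s_k+1}(t)-x_{s_k}(t)\ge 1$. Indeed, as soon as $P_k$ becomes active, Proposition~\ref{p-contractive} applied to the subsystem forces $\max_{i\in P_k}x_i(t)\le s_k$, and symmetrically $\min_{i\in P_{k+1}}x_i(t)\ge s_k+1$. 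Consequently every cross-cluster indicator $\chi_{|x_i-x_j|<1}$ vanishes for all $t$, and the full HK right-hand side at $x(t)$ reduces, cluster by cluster, to the restricted field used in the construction. The equation thus holds at every $t$ outside the finite set of switching times $\{\bar t_1,\ldots,\bar t_\ell\}$, so $x(\cdot)$ is Caratheodory.

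Next I would show that each restricted sub-dynamics on $P_k$ with unit initial spacing converges to $\bar x_k$. The barycenter of the subsystem is preserved (cf.\ Proposition~\ref{prop:FK-HK}) and, by Proposition~\ref{p-contractive}, the convex hull of $P_k$ is monotonically contracting; once all intra-cluster distances are strictly less than $1$ the subsystem reduces to a single exponentially stable linear ODE, which contracts every agent to the barycenter. For singleton clusters the agent is trivially at its own barycenter; the parameter $\bar t_k$ is then redundant but retained so that the nominal parameter space has dimension $\ell$. Distinctness of solutions for different choices of $(\bar t_1,\ldots,\bar t_\ell)$, restricted to the coordinates corresponding to non-singleton clusters, is immediate, since $\bar t_k$ is characterised as the instant at which the agents of $P_k$ begin to move.

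The main obstacle is the cluster-collapse step for clusters of size $n_k\ge 4$: the unit-spacing initial configuration of $P_k$ sits entirely on the discontinuity set $\MM$ of the restricted field, with only the $n_k-1$ nearest-neighbour distances equal to exactly $1$, so one has to argue that the cascade by which longer-range pairs come within unit distance (and thus activate) actually terminates with all agents in mutual range and hence collapses to a single point. For $n_k=2,3$ this is the explicit computation already performed for the $\beta$-trajectory in Section~\ref{sec:three-inR}; in general one partitions $[\bar t_k,+\infty)$ into phases between successive activation instants and uses the monotone contraction of the convex hull from Proposition~\ref{p-contractive} on each phase to guarantee that the cluster diameter strictly decreases, so that further pairs eventually activate and the cluster does collapse to $\bar x_k$.
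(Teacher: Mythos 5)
Your construction is exactly the paper's proof: for each block $P_k$ choose a waiting time $\bar t_k$, keep the block frozen on $[0,\bar t_k]$, then let its agents interact and converge to their barycenter, the unit gap between consecutive blocks never shrinking, so no cross-block interaction ever activates. If anything, the paper's version is terser than yours (it simply asserts that each activated block converges to its barycenter), so your extra care about the activation cascade for large blocks and the redundancy of $\bar t_k$ for singleton blocks goes beyond what the paper records.
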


\begin{proof}
Fix $(n_1,\ldots,n_\ell)\in\Delta_1(N)$. 
Given $\bar{t}_1\in [0,+\infty[$ we can define a dynamics for
the first $n_1$ agents: constant on $[0,\bar{t}_1]$ and satisfying:
\[
\dot{x}_1(t)=(x_2-x_1)(t),\quad
\dot{x}_i=(x_{i-1}-x_i)(t)+(x_{i+1}-x_i)(t),\
i=2,\ldots,n_1-1,\quad
\dot{x}_{n_1}(t)=(x_{n_1-1}-x_{n_1})(t),
\]
for $t\geq  \bar{t}_1$. Notice that the first $n_1$ agents
eventually converge to their barycenter.\\
Similarly for every $i$, $i=2,\ldots,\ell$,
given $\bar{t}_i\in [0,+\infty[$  we can define a dynamics for the
$n_i$ agents following $n_1+\ldots+n_{i-1}$: constant on
$[0,\bar{t}_i]$ and with all $n_i$ agents interacting on $[\bar{t}_i,+\infty[$.
All $n_i$ agents will converge to their barycenter. 
We thus proved the statement.
\end{proof}

\begin{prop}\label{prop:HK-multi-R-ineq}
Consider the variant model \eqref{e:HK1}. Let $n=1$, $\phi_{ij}=1$ and an initial condition satisfying \eqref{eq:in-con-dist1}, then solutions are parametrized by $\Delta_2(N)$ as follows.
For each $(n_1,\ldots,n_\ell)\in\Delta_2(N)$ there exists a single Caratheodory solution converging to a limit $x^{\infty}$ such that $x^{\infty}_i=x^{\infty}_j$
if and only $i,j$ belong to the same $P_k$, $k=1,\ldots,\ell$. Moreover,  there is no other Caratheodory solution.
\end{prop}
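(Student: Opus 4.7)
I would prove both directions separately, mirroring the structure of Proposition \ref{prop:HK-multi-R} but exploiting the rigidity imposed by the choice $a_{ij}(1)=1$ in the variant model \eqref{e:HK1}.

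For existence, given $(n_1,\ldots,n_\ell)\in\Delta_2(N)$ I would construct the candidate $x(\cdot)$ block by block: each singleton $P_k$ stays put, and each block $P_k$ with $n_k\geq 2$ evolves according to the isolated linear HK dynamics on $n_k$ agents starting from their integer positions. Inside each block the standard HK analysis gives exponential convergence to the block barycenter, producing the cluster limits required. To verify the Caratheodory property of $x(\cdot)$ for the full system \eqref{eq:HK} it is enough to check that, for every $t>0$, the distance between the rightmost agent of $P_k$ and the leftmost agent of $P_{k+1}$ is strictly greater than $1$; then all other between-block pairs are $>1$ by Proposition \ref{p-contractive} and monotonicity. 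At $t=0^+$, the leftmost of $P_{k+1}$ has right-derivative $+1$ whenever $n_{k+1}\geq 2$, and the rightmost of $P_k$ has right-derivative $-1$ whenever $n_k\geq 2$; the hypothesis $n_k+n_{k+1}\geq 3$ guarantees at least one of these, so the gap opens at rate at least $1$ at $t=0^+$ and stays above $1$ for all sufficiently small $t>0$. A continuation argument using the exponential contraction inside each block keeps the gap above $1$ for all $t>0$, so the equation is satisfied for every $t>0$ in the Caratheodory sense, the single point $t=0$ being measure zero.

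For the other direction, let $x(\cdot)$ be any Caratheodory solution. Ordering preservation in one dimension (standard for symmetric attractive HK dynamics) gives $d_i(t):=x_{i+1}(t)-x_i(t)\geq 0$. The analog of Claim~a) of Proposition \ref{p-Cprop} extends to $N$ agents by the same contradiction argument, yielding the monotone closure: if $d_i(t_0)<1$ for some $t_0$ then $d_i(t)<1$ for all $t\geq t_0$. I would then define the merging set $M=\{i : d_i(t)<1 \text{ for some } t\geq 0\}$ and read off the composition $(n_1,\ldots,n_\ell)$ from the connected components of the graph on $\{1,\ldots,N\}$ with edge set $M$. With the composition fixed, the dynamics within each block reduces to a linear ODE with Lipschitz right-hand side (all within-block distances are at most $1$ by Claim~A after $t>0$), so the solution is determined uniquely inside every block and there is no coupling between blocks for $t>0$. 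The last step is to exclude compositions containing $n_k=n_{k+1}=1$: if $z,z+1$ were two consecutive singletons, then by the singleton assumption every edge incident to either is in $S=\{1,\ldots,N-1\}\setminus M$, and the explicit formula from Proposition \ref{prop:FK-HK}, combined with ordering and contractivity, implies that the only possible interactions at $z$ and $z+1$ come from pairs at distance exactly $1$. If these interactions are inactive on a positive-measure set, $\dot x_z=\dot x_{z+1}=0$ there and, together with $d_z(0)=1$ and continuity, this forces $d_z\equiv 1$ on an initial interval; on that interval $a_{z,z+1}(1)=1$ switches the $z\leftrightarrow z+1$ interaction on, and the same formula now gives $\dot d_z=-2$ a.e., contradicting $d_z\equiv 1$.

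The main obstacle is precisely this last exclusion: the variant model sits at the threshold where $a(1)$ flips from $0$ to $1$, so the argument must carefully track which interactions are active on positive-measure level sets of the $d_i$. Once this rigidity is in place, the remainder reduces to classical Lipschitz ODE theory applied on each chamber of $\R^{N}\setminus\MM$, and convergence to the stated cluster limits follows from the spectral analysis of the linear HK dynamics on a connected interaction graph.
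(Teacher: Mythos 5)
Your existence half and your exclusion of consecutive singletons follow essentially the paper's own route: the admissible solutions are those of Proposition \ref{prop:HK-multi-R} with all waiting times $\bar t_i=0$, and two adjacent lone agents cannot both stay inert, since keeping their mutual distance equal to $1$ on a time interval activates $a_{z,z+1}(1)=1$ and forces $\dot d_z=-2$ a.e., a contradiction. Those parts are sound.

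The genuine gap is in the uniqueness half, where you assert that ``the analog of Claim a) of Proposition \ref{p-Cprop} extends to $N$ agents by the same contradiction argument.'' It does not. The three-agent proof hinges on the external pull on the pair being bounded by $1$ (there is only one other agent), which gives $\frac{d}{dt}(x_2-x_1)\le -2(x_2-x_1)+1<0$ when the gap lies in $(\tfrac12,1)$. With $N$ agents the external separating pull on an adjacent pair can be as large as $N-2$, and the monotone-closure property is in fact false for general configurations of \eqref{eq:HK} or \eqref{e:HK1} with $\phi_{ij}=1$: take $x_i=0$, $x_{i+1}=\tfrac12$ and ten agents placed at $1.4$; then $x_{i+1}-x_i<1$, yet $\dot x_{i+1}\approx 8.5$ while $\dot x_i\le 1$, so the interacting pair is torn apart and its distance crosses $1$. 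Hence, if the closure property holds here, it must exploit the special equally spaced initial datum \eqref{eq:in-con-dist1} (e.g.\ a quantitative control on how many agents can simultaneously lie within unit distance of a given agent on one side), and you supply no such argument. The same missing control resurfaces when you claim ``no coupling between blocks for $t>0$'': for a gap $d_i$ that never drops below $1$ you must also rule out that $d_i=1$ on a set of times of positive measure, where $a_{i,i+1}(1)=1$ switches the coupling on; your balance argument ($\dot d_i=-2+\text{external pull}$ there) excludes this only if the separating pull is known to be strictly less than $2$, which again requires bounding the number of in-range agents. The paper's (admittedly terse) proof does not route through this lemma at all, arguing instead directly from the rigidity that any group preserving mutual distance $1$ must contract because $a_{ij}(1)=1$; as written, your intermediate lemma is the step that would fail.
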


\begin{proof} Given $(n_1,\ldots,n_\ell)\in\Delta_2(N)$, there exists a Caratheodory solution for \eqref{e:HK1} such that the first $n_1$ agents interact for all times and converge to their barycenter and, in general, the $n_i$ agents following $n_1+\ldots+n_{i-1}$ interact for all times and converge to their barycenter.
This corresponds to solutions constructed in the proof of Proposition \ref{prop:HK-multi-R} for the case $\bar{t}_i=0$, $i=1,\ldots,\ell$.
Moreover, for a group with more than one agent, no Caratheodory solution 
to \eqref{e:HK1} can be constant on a time interval $[0,\bar t]$, $\bar{t}>0$,  as in Proposition \ref{p-Cprop} case ii), due to $a_{ij}(1)=1$ forcing
agents to attract each other if they keep their distance equal to $1$. Similarly, we can not have two consecutive groups consisting of only one agent.\\
This proves the statement.


\end{proof}

\section{Hegselmann-Krause in higher dimensions}
\label{sec:HK-multiR}
As we have seen in Section \ref{sec:HK-R}, solutions may not be unique for all the concepts, except for classical, CLSS and stratified solutions, already in dimension one. Here we show some additional complexity in the set of Caratheodory solutions in higher dimension, as well as loss of uniqueness for CLSS solutions.

First consider \eqref{eq:HK} with $n=2$, $N=4$, $\phi_{ij}=1$ with initial condition:
\begin{equation}\label{eq:id-n2}
x_{1,0}=(0,0),\ x_{2,0}=(1,0),\ x_{3,0}=(1,1),\ x_{4,0}=(0,1).
\end{equation}

\begin{figure}[h]
\begin{center}
\includegraphics[scale=.4]{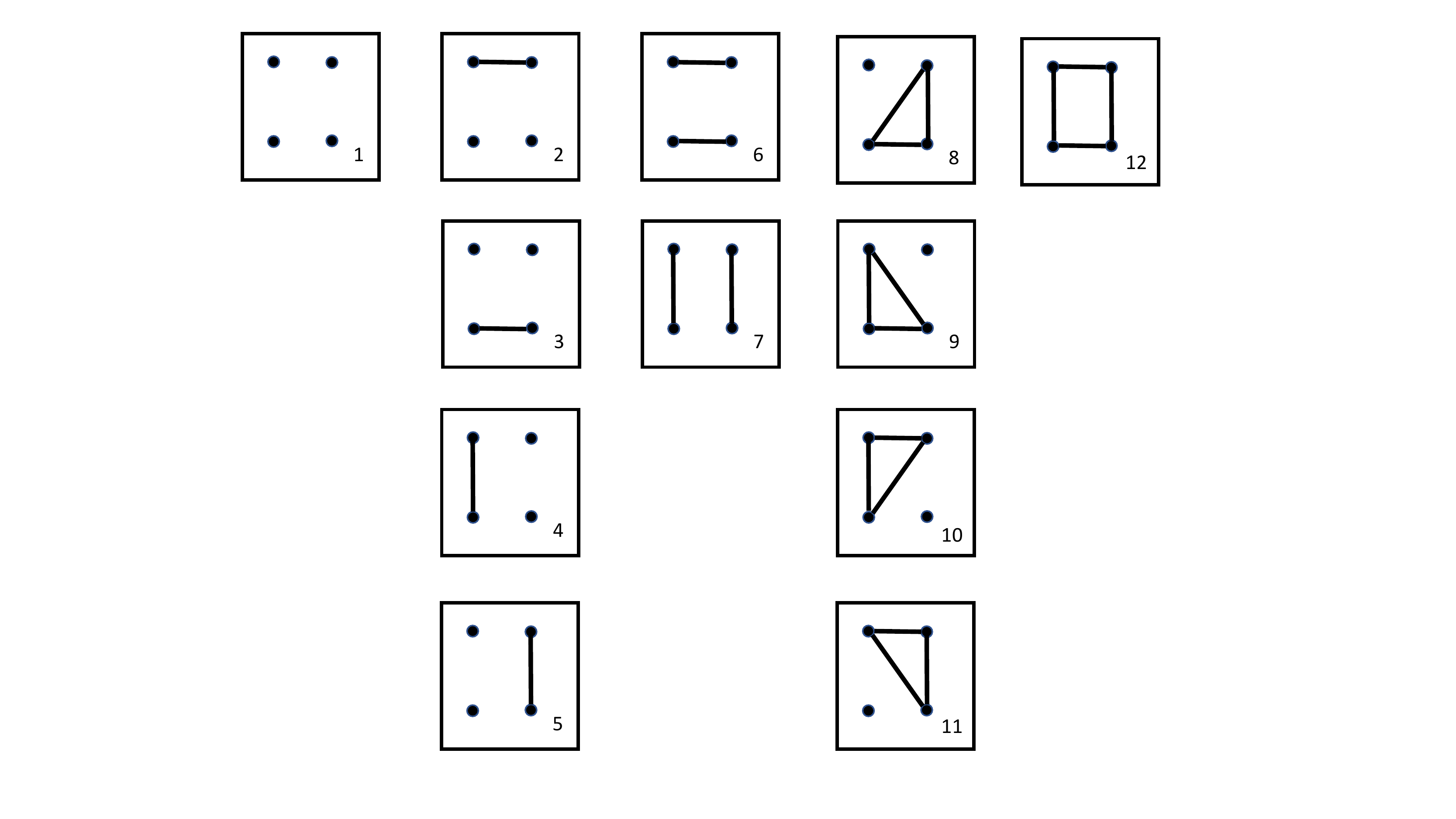}
\caption{Representation of potential family
of solutions for the initial data \eqref{eq:id-n2}.}
\label{fig:2D}
\end{center}
\end{figure}

We have the following:
\begin{prop}
The set of Caratheodory solutions to \eqref{eq:HK}, 
with $n=2$, $N=4$, $\phi_{ij}=1$, and
initial datum \eqref{eq:id-n2},
contains 12 parametric families of solutions as follows.
We refer to Figure \ref{fig:2D} where in each box agents
connected by edges converge to their barycenter:
\begin{itemize}
\item[i)] Case 1: single constant solution;
\item[ii)] Cases 2-5 and 8-12: one-parameter family. Given
$\bar{t}\in [0,+\infty[$ the solution is constant on 
$[0,\bar{t}]$ then connected agents converge to their barycenter;
\item[ii)] Cases 6,7: family parameterized by two two-parameter sets $\TT =A_1\cup A_2$, with 
$A_1=\{(t_1,t_2): 0\leq t_1,t_2\}$, 
$A_2=\{(t_1,t_3): 0\leq t_1,t_3\}$.
If $(t_1,t_2)\in A_1$, then the solution is constant on 
$[0,\min\{t_1,t_2\}]$,  then agents $x_1$ and $x_2$ for case 6
(respectively $x_1$ and $x_4$ for case 7) 
start converging to their barycenter $(\frac12,0)$ 
(respectively $(0,\frac12)$ for case 7) at time $t_1$, while 
agents $x_3$ and $x_4$ for case 6
(respectively $x_2$ and $x_3$ for case 7) 
start converging to their barycenter $(\frac12,1)$
(respectively $(1,\frac12)$ for case 7)  at time $t_2$.
If $(t_1,t_3)\in A_2$, then the solution is as for $A_1$ with $t_1=t_2$ up to time $t_1+t_3=t_2+t_3$, then for $t\geq t_1+t_3$ all agents interact and converge to a unique cluster at $(\frac12,\frac12)$.
\end{itemize}
In particular the number of asymptotic clusters can be 1, 2, 3 or 4. 
There is 1 asymptotic configuration with 4 clusters,
4 asymptotic configurations with 3 clusters,
6 asymptotic configurations with 2 clusters, and
1 asymptotic configuration with 1 cluster.
\end{prop}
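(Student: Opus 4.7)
The proof proceeds by explicit construction and direct verification of the Carathéodory condition. At the initial configuration \eqref{eq:id-n2} the six pairwise distances split into the four \emph{square edges} $\{1,2\},\{2,3\},\{3,4\},\{1,4\}$ at distance exactly $1$, and the two diagonals $\{1,3\},\{2,4\}$ at distance $\sqrt 2>1$. Since $\chi_{\|x_i-x_j\|<1}$ vanishes at distance exactly $1$, the Carathéodory equation \eqref{eq:HK} imposes no force at $t=0$, and the multiplicity of solutions arises from the freedom, after any common delay $\bar t\in[0,+\infty[$, to let each square-edge pair either remain at distance $1$ (inactive) or drop strictly below $1$ (active) for $t>\bar t$. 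Each admissible choice of active-edge subgraph $G\subseteq C_4$ thus yields one of the twelve families indicated in Figure \ref{fig:2D}.

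For each pattern $G$ one builds a candidate trajectory so that, on $[\bar t,+\infty[$, the right-hand side of \eqref{eq:HK} involves exactly the pairs in $G$; on every connected component of $G$ the resulting linear ODE drives the agents exponentially to their common barycenter, in complete analogy with the computations carried out in Sections \ref{sec:toy-ex} and \ref{sec:three-inR}. Checking that the candidate is genuinely a Carathéodory solution then reduces to verifying that, for every pair $\{i,j\}\notin G$, the distance $\|x_i(t)-x_j(t)\|$ stays $\geq 1$ for all $t\geq\bar t$; since each component dynamics is linear with explicit eigenstructure, these distance functions are available in closed form. Case 1 is the constant trajectory. For cases 2--5 the two non-merging agents stay fixed, and a direct computation shows the remaining five pairwise distances stay $\geq 1$ throughout. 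Cases 6 and 7 admit richer two-parameter families: on the $A_1$-portion the two opposite-edge pairs are activated at independent times $t_1,t_2$, the two midpoints $(1/2,0),(1/2,1)$ (respectively $(0,1/2),(1,1/2)$) stay at distance exactly $1$, and the four cross distances are checked to remain $\geq 1$; on the $A_2$-portion one additionally triggers, at time $t_1+t_3$, the collective interaction between the two midpoints, which drives the system to a single cluster at $(1/2,1/2)$. The remaining cases 8--12 are treated by the same scheme, tailored to their specific connected components.

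Finally, the count of asymptotic configurations is a direct enumeration. The constant trajectory gives the unique $4$-cluster asymptotic; the four single-edge mergers give the four distinct $3$-cluster asymptotics; the $A_1$-portion of cases 6--7 together with cases 8--11 produce the $2+4=6$ distinct $2$-cluster asymptotics claimed in the statement; and the fully-merged limit at $(1/2,1/2)$, reached either through the $A_2$-portion of cases 6--7 or through case 12, is the unique $1$-cluster asymptotic. The technical core, and the main obstacle in this plan, lies in the inactive-pair verification for the configurations in which two active edges share an agent: there the coupling through the common vertex makes the sign of the derivative of the transient distances non-trivial at $t=\bar t$, and one must combine the explicit exponential modes on the connected component with a careful monotonicity estimate on the relevant distance functions to conclude that no additional pair enters the interaction range.
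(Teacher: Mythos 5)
Your overall strategy (after a common delay, activate a subgraph of the four unit-distance edges, solve the linear dynamics on each connected component, and verify that every inactive pair keeps distance at least $1$) is exactly the route the paper takes, which simply invokes the argument of Proposition \ref{prop:HK-multi-R}. The genuine gap is the step you yourself flag and postpone: the inactive-pair verification for the configurations in which two active edges share an agent, i.e.\ your cases 8--11, and that verification in fact fails. Take active edges $\{1,2\},\{2,3\}$ switched on at time $\bar{t}$ with agent $4$ idle at $(0,1)$. Solving the path-graph dynamics explicitly (Laplacian eigenvalues $0,1,3$) gives, with $s=e^{-(t-\bar{t})}$,
\[
\|x_3(t)-x_4(t)\|^2=2\Bigl(\tfrac23-\tfrac{s^3}{6}\Bigr)^2+\tfrac{s^2}{2},
\]
which equals $1$ at $s=1$, has vanishing first and second derivatives in $s$ there, and third derivative equal to $4>0$; hence it drops strictly below $1$ immediately after $\bar{t}$ and converges to $8/9$. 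So agent $4$ is forced into the interaction range, the candidate does not satisfy \eqref{eq:HK} for a.e.\ $t>\bar{t}$, and by symmetry the same happens for every pattern with two adjacent active edges. The ``careful monotonicity estimate'' you defer cannot be supplied.

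The obstruction is structural, not a matter of sharper estimates. For any Caratheodory (indeed Filippov) solution from \eqref{eq:id-n2}, the barycenter $(\tfrac12,\tfrac12)$ is conserved and all agents remain in $[0,1]^2$ by Proposition \ref{p-contractive}; an asymptotic split into a triple at $p$ and a singleton at $q$ would force $3p+q=(2,2)$ with $p,q\in[0,1]^2$, hence $\|p-q\|=\tfrac43\,\|q-(\tfrac12,\tfrac12)\|\le\tfrac{2\sqrt2}{3}<1$, contradicting the clustering property P2 established in Proposition \ref{p-Fclust}. Thus no solution can exhibit a $3+1$ asymptotic clustering, so the ``$2+4=6$ distinct $2$-cluster asymptotics'' in your enumeration (taken over from the statement) cannot all be realized: only the two $2{+}2$ limits of cases 6--7 survive. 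Your construction and distance checks are correct for case 1, the single-edge cases 2--5, the opposite-edge cases 6--7 (including the $A_2$ merging), and the all-edges case 12; to make the proof sound you must drop cases 8--11 and correct the final count, rather than assert the missing verification. Be aware that the paper's own proof is only a one-line appeal to Proposition \ref{prop:HK-multi-R} and does not perform this check either, so the discrepancy originates in the statement and figure, but your write-up inherits it at precisely the point you identified as the technical core.
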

\begin{proof}
The proof follows the same arguments as in the proof of Proposition \ref{prop:HK-multi-R}.
\end{proof}
Following the logic of Proposition \ref{prop:HK-multi-R-ineq}, we obtain the following:
\begin{prop} \label{p-Cara1incl}
Consider the variant system \eqref{e:HK1}, 
with $n=2$, $N=4$, $\phi_{ij}=1$.
The set of Caratheodory solutions 
for the initial datum \eqref{eq:id-n2}
has 5 elements, corresponding to cases
8-12 above.
\end{prop}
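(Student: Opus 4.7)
The plan is to adapt the exclusion argument used in Proposition \ref{prop:HK-multi-R-ineq} to the planar configuration \eqref{eq:id-n2}. The decisive feature of \eqref{e:HK1} is that $a_{ij}(1)=1$, so any two agents remaining at distance exactly $1$ on a set of positive Lebesgue measure must contribute a nontrivial attraction $\phi_{ij}(1)(x_j-x_i)=(x_j-x_i)$ to the ODE almost everywhere. In particular, no two agents can both remain motionless while at mutual distance $1$, and more generally no prescribed dynamics that leaves a pair of agents at distance $1$ without interaction can be a Caratheodory solution of \eqref{e:HK1}.

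I would then eliminate cases $1$ through $7$ one by one. In case $1$ the constant trajectory keeps all four square edges at distance $1$, directly contradicting the forced attraction. In cases $2$--$5$ only one square edge is active, so the two agents not involved in that edge remain static at mutual distance exactly $1$, giving the same contradiction. In cases $6$ and $7$ the two active edges are parallel; a short symmetry argument based on the explicit solution $x_1(t)=(1/2-e^{-2t}/2,0)$ (and its permutations) shows that the two \emph{inactive} pairs $(x_1,x_4)$ and $(x_2,x_3)$ translate rigidly and hence preserve mutual distance exactly $1$ throughout the trajectory, once more forcing an interaction that the prescription omits. The same principle forbids any waiting parameter $\bar t>0$ in the delay-then-merge ansatz of cases $2$--$5$ and $8$--$12$, since a constant piece of positive length would again leave the four square edges at distance $1$ without interaction.

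It remains to verify that cases $8$--$12$ with $\bar t=0$ actually yield Caratheodory solutions of \eqref{e:HK1} and that no other solution exists. For each such case one checks that every pair of agents belonging to distinct connected components of the prescribed interaction graph has mutual distance strictly greater than $1$ for all $t>0$ small, so that the variant-model forced interaction at $t=0$ occurs only on the measure-zero set $\{0\}$ and \eqref{e:HK1} holds almost everywhere. For case $12$ this is immediate from the $D_4$-symmetric contraction toward the barycenter $(1/2,1/2)$; for cases $8$--$11$ one reads off the prescribed edge pattern from Figure \ref{fig:2D} and carries out a Taylor expansion at $t=0^+$ to verify that every inactive intercomponent pair immediately separates. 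The main obstacle is organizing this case-by-case second-order bookkeeping consistently across the five surviving cases and ensuring that the pairwise distances that start equal to $1$ actually grow above $1$ before the clusters converge; together with the exclusion arguments above, this yields precisely the five Caratheodory solutions claimed.
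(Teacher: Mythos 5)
The first half of your argument --- excluding cases 1--7 and any positive waiting time $\bar t>0$ because $a_{ij}(1)=1$ forces an attraction whenever a pair sits at distance exactly $1$ on a time set of positive measure --- is precisely the justification the paper itself offers, which consists only of the phrase ``following the logic of Proposition~\ref{prop:HK-multi-R-ineq}''; so on the exclusion side you match the paper. (One small imprecision: for cases 6--7 your rigid-translation argument covers only the simultaneous sub-case $t_1=t_2$; when $t_1\neq t_2$ you should instead point to the interval on which one pair is still static at mutual distance exactly $1$.)

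The genuine gap is in the second half, which you explicitly defer: verifying that the five prescriptions of cases 8--12 with $\bar t=0$ really are Caratheodory solutions of \eqref{e:HK1}. This is not routine bookkeeping, and your stated expectation that the pairwise distances starting at $1$ ``actually grow above 1'' fails. The cluster counts in the preceding proposition indicate that cases 8--11 are the configurations with two adjacent edges active, i.e.\ a path such as $1$--$2$--$3$ with agent $4$ isolated. Under that prescribed dynamics ($\dot x_1=x_2-x_1$, $\dot x_2=(x_1-x_2)+(x_3-x_2)$, $\dot x_3=x_2-x_3$, $\dot x_4=0$) a third-order expansion at $t=0^+$ gives $\|x_1(t)-x_4(t)\|^2=1-\tfrac23\,t^3+O(t^4)$, and the interacting triple contracts toward its barycenter $(\tfrac23,\tfrac13)$, which lies at distance $2\sqrt{2}/3<1$ from the corner $(0,1)$; hence $\|x_1-x_4\|\leq 1$ on a set of positive measure, and by your own forcing principle the omitted interaction with agent $4$ becomes mandatory, so the prescribed trajectory is not a Caratheodory solution of \eqref{e:HK1} (nor, in fact, of \eqref{eq:HK}). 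In other words, the one-dimensional mechanism behind Proposition~\ref{prop:HK-multi-R-ineq} --- contraction of a group opens the gaps to its neighbours --- does not transfer to the square geometry of \eqref{eq:id-n2}, where contraction of three corners closes the gap to the fourth. Your plan therefore cannot be completed as written: either cases 8--11 must be read off Figure~\ref{fig:2D} differently, or the deferred Taylor verification would refute rather than confirm the count of five, and this is exactly the point that needs to be confronted rather than postponed.
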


As pointed out in Section \ref{sec:three-inR}, solutions from the same initial data may converge to different clusters. Here we show that this effect becomes more dramatic in dimension two, with different solutions
converging to arbitrarily far away clusters.
\begin{prop} \label{p:distanziamento}
Consider the ODE \eqref{eq:HK} with $n=2$ and $\phi_{ij}=1$. 
Given $R>0$, there exists a system of $N$ agents with an initial condition such that
there exists two Caratheodory solutions $x^1$, $x^2$ to the Cauchy Problem
with $\lim_{t\to\infty} \sup_i |x^1_i(t)-x^2_i(t)|>R$.
\end{prop}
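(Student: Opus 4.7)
My plan is to reduce the construction to a one-dimensional chain embedded in $\R^2$, and then leverage Proposition \ref{prop:HK-multi-R} to exhibit two Caratheodory solutions with drastically different asymptotic configurations. I would start by fixing an integer $N > 2R+1$ and placing the agents on the horizontal axis: $x_{i,0}=(i-1,0)\in\R^2$ for $i=1,\ldots,N$. Consecutive agents are then at Euclidean distance exactly $1$, which after projecting to the first coordinate is precisely the initial condition \eqref{eq:in-con-dist1} of Proposition \ref{prop:HK-multi-R}.

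The second step is to observe that the horizontal line $\R\times\{0\}$ is invariant under \eqref{eq:HK} starting from this data: since $\dot y_i$ is a non-negative linear combination of differences $y_j - y_i$, whenever all vertical coordinates vanish at a time they remain zero. In particular, the 1D Caratheodory solutions produced by Proposition \ref{prop:HK-multi-R} extend to genuine 2D Caratheodory solutions of \eqref{eq:HK} by simply setting $y_i(t)\equiv 0$, so the original two-dimensional problem reduces to the linear one-dimensional Hegselmann--Krause setting of Section \ref{sec:HK-R}.

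I would then select two Caratheodory solutions furnished by Proposition \ref{prop:HK-multi-R}. First, $x^1$: the constant solution corresponding to the finest partition $(1,1,\ldots,1)\in\Delta_1(N)$, so $x^1_i(t)\equiv (i-1,0)$. Second, $x^2$: the fully merged solution corresponding to the coarsest partition $(N)\in\Delta_1(N)$ with parameter $\bar{t}_1=0$, where all $N$ agents interact from time zero and converge to their common (conserved) barycenter $(\tfrac{N-1}{2},0)$. For the first agent we then have $x^1_1(t)\equiv (0,0)$ while $x^2_1(t)\to (\tfrac{N-1}{2},0)$, yielding
$$\lim_{t\to\infty}\sup_i |x^1_i(t)-x^2_i(t)|\geq \lim_{t\to\infty}|x^1_1(t)-x^2_1(t)|=\tfrac{N-1}{2}>R,$$
as required.

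I do not foresee a substantial obstacle, since the heavy lifting is done by Proposition \ref{prop:HK-multi-R}. The only mildly delicate point is the existence of the fully merged Caratheodory solution when all neighboring pairs sit exactly at the interaction threshold $r=1$ at time zero; this is precisely the non-classical behavior that Proposition \ref{prop:HK-multi-R} was designed to capture, so it is available by direct citation rather than requiring further argument here.
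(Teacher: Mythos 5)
Your argument is logically sound as a derivation from results the paper states earlier, but it follows a genuinely different route from the paper's own proof. You reduce everything to the one-dimensional chain $x_{i,0}=(i-1,0)$ and invoke Proposition \ref{prop:HK-multi-R}: the finest partition $(1,\dots,1)\in\Delta_1(N)$ gives the constant (indeed classical) solution, the coarsest partition $(N)$ with $\bar t_1=0$ gives a solution in which all agents merge, and barycenter invariance (P1 for Caratheodory solutions) forces the common limit to be $\bigl(\tfrac{N-1}{2},0\bigr)$; the embedding in $\R^2$ via the invariant line $\R\times\{0\}$ is correct, and $N>2R+1$ closes the estimate. The paper instead builds a recursive, intrinsically two-dimensional configuration: at each step a new, much heavier group of agents is placed as a single point at distance exactly $1$ from the already-collapsed cluster, so that every coalescence event is essentially a two-point weighted merge whose dynamics (monotone decrease of the single relevant distance) is trivial to control; alternating the directions $(1,0)$ and $(0,1)$ then yields a displacement of order $\sqrt{\nu}$. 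What the paper's heavier construction buys is precisely independence from the delicate instance of Proposition \ref{prop:HK-multi-R} that your proof leans on: for the single group of size $N$ large, the claim that the unit-spaced chain coalesces into one cluster is the least obvious case of that proposition (its proof in the paper is only sketched), since the end agents collapse much faster than the nearly frozen middle, extra interactions switch on asymmetrically, and one must check that no consecutive gap is pushed back above the threshold $1$ — link breaking is not excluded in general for this dynamics. So your proof is shorter and gives a separation of order $N/2$ rather than $\sqrt{\nu}$, but it outsources the connectivity issue to the strongest case of Proposition \ref{prop:HK-multi-R}, whereas the paper's construction is self-contained on exactly that point; if you want your version to stand on its own, you should either restrict to group sizes for which coalescence of the chain is verified or add an argument that the gaps never re-cross $1$ along the merged solution.
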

\begin{proof}
The proof will be constructive, by recursion, by adding agents
at distance $0$, $1$ or bigger than $1$. The solution
$x^1$ is taken to be constant, while $x^2$ is constructed
recursively by making all agents at distance $1$ interact. See a representation in Figure \ref{f-planar}.\\
\begin{figure}[htb]
\begin{center}
\includegraphics[scale=.4]{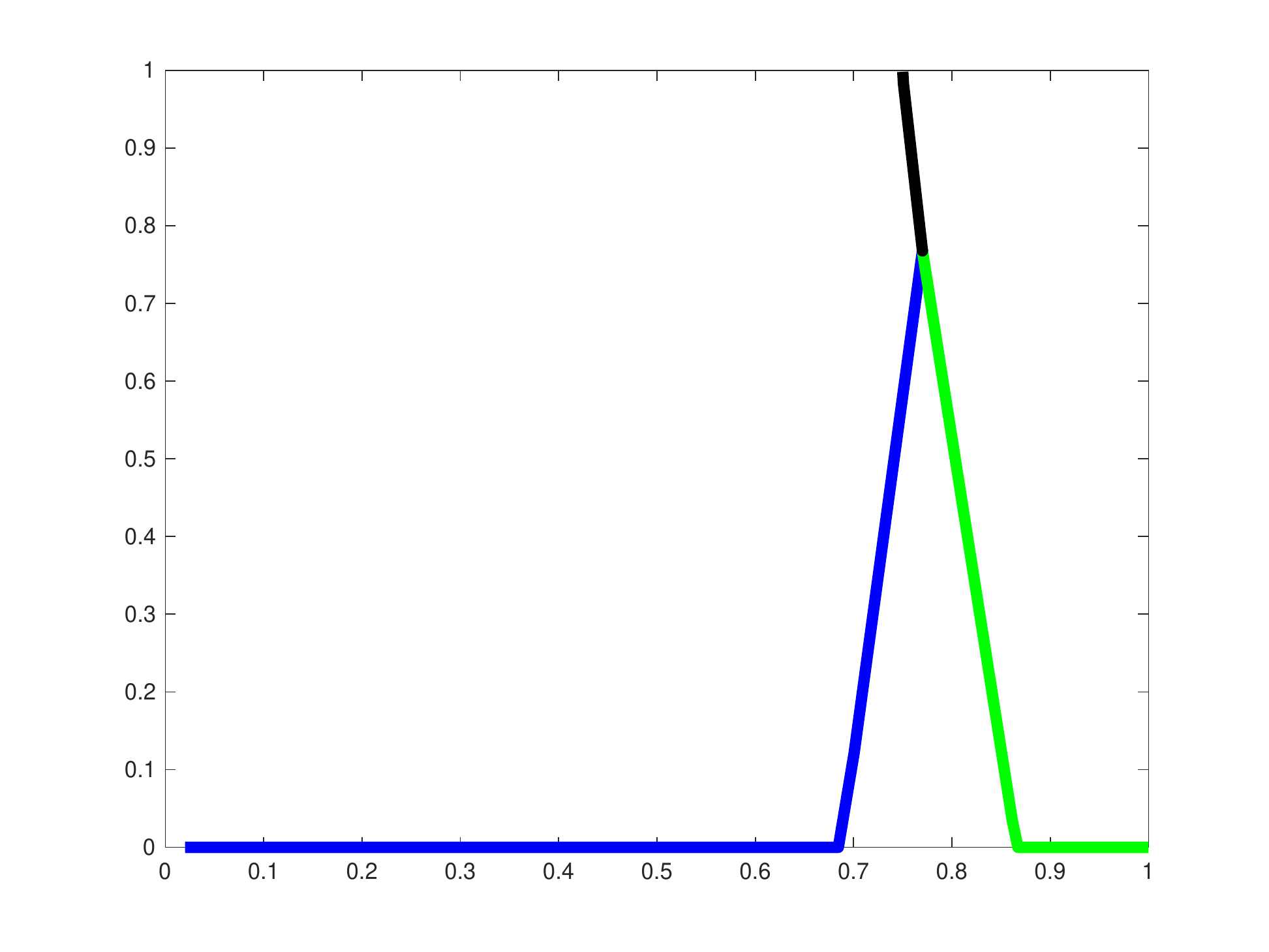}
\caption{The first two steps in the proof of Proposition \ref{p:distanziamento}: 1 agent (blue) starting at (0,0), $N_1=5$ agents (green) starting at (1,0), $N_2=10$ agents (black) starting at (.75,.998).}
\label{f-planar}
\end{center}
\end{figure}

We start with a single agent $x_1$ in position $(0,0)$. Then we add $N_1$ (to be chosen) agents 
with the following initial condition:
\begin{equation}
x_{i,0}=(1,0),\ i=2,\ldots,N_1+1.
\end{equation}
The solution $x^2$ has all agents converging to the
barycenter:
\[
\bar{x}_1=\left(\frac{N_1}{N_1+1},0\right),
\]
which is close to $(1,0)$ for $N_1$ sufficiently big.
Notice that, since the barycenter is invariant,
at every time $t\geq 0$, 
there exists $\epsilon=\epsilon(t)$, with
$0\leq \epsilon\leq \frac{1}{N_1}$,
such that the agents are in the following position:
\begin{equation}\label{eq:bar-time-eps}
x_1=\left(\frac{N_1}{N_1+1}-N_1\,\epsilon,0\right),\qquad
x_{i,0}=\left(\frac{N_1}{N_1+1}+\epsilon,0\right),\ i=2,\ldots,N_1.
\end{equation}
The final distance between the asymptotic state
of the first agent along $x^1$ and $x^2$ is given
by $\frac{N_1}{N_1+1}$, which is close to 1.\\
We now add $N_2>N_1$ (to be chosen) agents in position:
\begin{equation}\label{eq:in-N2}
(y_1,y_2)=
\left(\frac{N_1}{N_1+1}+\epsilon_2\frac{1-N_1}{2},
\sqrt{1-\left(\frac{\epsilon_2(N_1+1)}{2}\right)^2}\right),
\end{equation}
with $\epsilon_2$ sufficiently small to be chosen.
Then the first $N_1+1$ agents reach distance $1$
to the other $N_2$ agents at the same time
$t$ such that $\epsilon(t)=\epsilon_2$.
We deduce that along the solution $x^2$ 
all agents converges to:
\[
\left(\frac{N_1}{N_1+1}+\frac{N_2}{N_2+N_1+1}
\epsilon_2\frac{1-N_1}{2},
\frac{N_2}{N_2+N_1+1}
\sqrt{1-\left(\frac{\epsilon_2(N_1+1)}{2}\right)^2}\right)
\]
which, for $N_2$ sufficiently large,
is close to $(y_1,y_2)$ of \eqref{eq:in-N2}.
Therefore the final distance between the asymptotic state
of the first agent along $x^1$ and $x^2$ is 
close to the norm of  \eqref{eq:in-N2}, which is
close to $\sqrt{2}$ for $N_1<N_2$ sufficiently big
and $\epsilon_2$ sufficiently small.\\
Now, by recursion, we can add $N_3>N_2$ agents
in position $(y_1+1 -\epsilon_{3,1},y_2-\epsilon_{3,2})$
choosing $\epsilon_{3,1}$ and $\epsilon_{3,2}$
so that all $1+N_1+N_2$ first
agents will reach distance $1$ at the same time to 
the other $N_3$ agents.
Reasoning as before, we can choose $N_3$ sufficiently
big and $\epsilon_{3,1}$, $\epsilon_{3,2}$,
sufficiently small so that the final distance between the asymptotic state of the first agent along $x^1$ and $x^2$ is 
close to $\sqrt{3}$.\\
By recursion, at each step we add a new group of agents at distance close to $1$ to the previous group along alternating
directions $(1,0)$ and $(0,1)$. In this way, 
for every $\nu$ we can choose  $1+N_1+\ldots+N_\nu$
agents in initial positions so that 
the final distance between the asymptotic state of the first agent along $x^1$ and $x^2$ is arbitrarily close to $\sqrt{\nu}$. Taking $\nu>R^2$ we conclude.
\end{proof}

\subsection{Non-uniqueness of CLSS solutions} \label{s-nonCLSS}

In this section, we show that CLSS solutions
may not be unique in dimension 2. We first study the easier case of the variant model \eqref{e:HK1}.
\begin{prop} \label{p-nonCLSS}
Consider the ODE \eqref{e:HK1} with $n=2$, $N=2$, $\phi_{ij}=1$
and initial condition $x_1(0)=(0,0)$, $x_2(0)=(1,0)$ and 
$x_3(0)=(\frac{1}{3},1)$. Then there exist two CLSS solutions
to the associated Cauchy problem.

The ODE \eqref{eq:HK} with the same initial data has a single CLSS solution.
\end{prop}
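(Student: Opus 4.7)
The plan is to exploit the fact that the two models \eqref{e:HK1} and \eqref{eq:HK} differ only in the value of $a_{13}(1)$, together with the observation that the initial dynamics bring the pair $\{x_1,x_2\}$ into tangential contact with the interaction ball of $x_3$ at exactly one instant.

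First I would check that at $t=0$ the distances are $|x_1(0)-x_2(0)|=1$, $|x_1(0)-x_3(0)|=\sqrt{10}/3$, and $|x_2(0)-x_3(0)|=\sqrt{13}/3$, so in both models only agents $1$ and $2$ interact initially. As long as agent $3$ stays inactive, $x_1$ and $x_2$ remain on the line $y=0$ and the Euler iterates with constant step $\tau$ are $x_1^k=((1-(1-2\tau)^k)/2,0)$ and $x_2^k=((1+(1-2\tau)^k)/2,0)$. The distance from $(\xi,0)$ to $x_3=(1/3,1)$ equals $\sqrt{(\xi-1/3)^2+1}\geq 1$, with equality iff $\xi=1/3$; hence $|x_1^k-x_3|=1$ precisely when $(1-2\tau)^k=1/3$, corresponding in the continuous limit to the unique tangency time $t^\ast=\tfrac12\ln 3$ with $x_1(t^\ast)=(1/3,0)$.

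For the variant model \eqref{e:HK1} I would then construct two sequences of step sizes. Sequence $(a)$: pick $\tau_\nu^{(a)}\to 0$ such that $(1-2\tau_\nu^{(a)})^k\neq 1/3$ for every $k\in\N$ (this excludes only a countable set of $\tau$). Then $|x_1^k-x_3|>1$ at every Euler step, so $a_{13}$ stays $0$ throughout, agent $3$ is never activated, and the iterates converge uniformly to the trajectory in which $x_3\equiv(1/3,1)$ while $x_1,x_2$ tend exponentially to $(1/2,0)$. Sequence $(b)$: pick integers $k_\nu\to\infty$ and set $\tau_\nu^{(b)}=(1-3^{-1/k_\nu})/2$, so that $k_\nu\tau_\nu^{(b)}\to t^\ast$ and $x_1^{k_\nu}=(1/3,0)$ exactly; since $a_{13}(1)=1$ under \eqref{e:HK1}, the subsequent Euler step activates the interaction between agents $1$ and $3$. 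Past this moment the governing right-hand side is piecewise Lipschitz, so the Euler iterates converge uniformly on compact intervals to a continuous trajectory in which $x_3$ leaves $(1/3,1)$, a limit plainly distinct from that produced by sequence $(a)$. Hence at least two CLSS solutions exist.

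For the original model \eqref{eq:HK} the same Euler computation applies, but now $a_{13}(r)=0$ for every $r\geq 1$: even if the iterate lands exactly at $(1/3,0)$ the interaction between agents $1$ and $3$ remains inactive and $x_3$ stays put. Since $x_1^k$ never leaves the $x$-axis and the $x$-axis lies at distance $\geq 1$ from $x_3$, no Euler approximation can ever activate agent $3$; every sequence converges to the same limit and the CLSS solution is unique. The main obstacle is the convergence claim for sequence $(b)$: one has to verify that, on each maximal time interval where the active interaction graph is constant, the Euler iterates converge uniformly (Lipschitz stability of Euler) and that only finitely many switches occur on each bounded interval. Distinctness from sequence $(a)$ is then immediate, because $x_3$ receives a non-vanishing displacement as soon as the $(1,3)$-interaction turns on.
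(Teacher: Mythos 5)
Your construction for the variant model \eqref{e:HK1} is essentially the paper's own proof: the whole argument hinges on the tangency of the $x$-axis with the unit ball around $x_3=(\frac13,1)$, and on the dichotomy of whether a sampling point lands exactly at first coordinate $\frac13$ (activating the $1$--$3$ interaction, since $a_{13}(1)=1$) or never does (so agent $3$ stays frozen and agents $1,2$ converge to $(\frac12,0)$). Your explicit Euler iterates $x_1^k=\bigl(\frac{1-(1-2\tau)^k}{2},0\bigr)$ and the two families of step sizes just make the paper's phrase ``both situations can occur with arbitrarily close times $t_i$'' concrete; the remaining convergence issues you flag (Euler stability on intervals with a fixed interaction graph, finitely many switches) are at the same level of detail the paper leaves implicit. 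So the first half is fine and not a different route.

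The second half contains a genuine error: your opening claim that ``in both models only agents $1$ and $2$ interact initially'' is false for \eqref{eq:HK}. There $a_{12}(1)=0$ and $\|x_1(0)-x_2(0)\|=1$, so at the initial sampling time \emph{no} pair interacts, the sampled vector field is zero, and by induction every sample-and-hold trajectory is identically constant. Consequently ``the same Euler computation'' does not apply, agent $1$ never reaches first coordinate $\frac13$, and the unique CLSS solution for \eqref{eq:HK} is the constant one --- not a trajectory in which $x_1,x_2$ converge to $(\frac12,0)$ while $x_3$ stays put. The uniqueness conclusion you need survives (indeed it becomes trivial), but as written your argument for \eqref{eq:HK} describes dynamics that the model does not produce; you should replace it with the observation that all initial mutual distances are $\geq 1$ and $a_{ij}(r)=0$ for $r\geq 1$, so every approximation is frozen and the CLSS solution is unique and constant.
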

\begin{proof}
Consider an approximate solution $x(\cdot)$ having constant derivative
on the intervals with endpoints $0=t_0<t_1<\cdots <t_m=T$, $T$ sufficiently big, as in the Definition of CLSS solution (Definition \ref{def:ODE-sol}, case 5.)
If $x(t_i)=\frac{1}{3}$ for some $i$, then at time $t_i$ the first
agent is influenced by the third agent and the solution will tend to a unique
cluster. If $x(t_i)\not= \frac{1}{3}$ for every $i$, then the first agent
will never interact with the third agent, so the first two agents will
converge to $(\frac{1}{2},0)$ while the third will remain constant.
Since both situations can occur with arbitrarily close
times $t_i$, there are two CLSS solutions.

It is easy to prove that the ODE \eqref{eq:HK} with the same initial data has a single CLSS solution, as the case $x(t_i)=\frac13$ does not change the dynamics.
\end{proof}

We now prove non-uniqueness of a CLSS solution for the dynamics \eqref{eq:HK}. The construction is more complicated, as we study a system of 10 agents in $\R^2$ for which two Caratheodory solutions exists. We then give two sequences of approximated solutions, each uniformly converging to one of the two Caratheodory solutions. Detailed computations are omitted, as they can be numerically checked.

We first define the two Caratheodory solutions. We start by fixing the following constant values:

$$\eps:=1/10,\qquad T:=1/100,\qquad B=\frac{127}{10\sqrt{91}}.$$
We also fix the initial data for the system:
$$x_1(0)=(0,B-(B-\sqrt{1-(1/2-2\eps)^2})e^{2T}),\qquad x_{2}(0)=(0,B+(B-\sqrt{1-(1/2-2\eps)^2})e^{2T}),$$
$$x_{3,4,5,6}(0)=((1/2-2\eps)e^{8T},0),\qquad x_{7,8,9,10}(0)=(-(1/2-2\eps)e^{8T},0).$$

We have two Caratheodory solutions in which the groups of agents $\{3,4,5,6\}$ and $\{7,8,9,10\}$ are kept together. The first solution $x(t)$ is given by keeping agents $1,2$ not interacting with agents $3,\ldots, 10$. Thus, agents 1,2 exponentially converge to their barycenter $(0,B)$, following the trajectories
\begin{equation}\label{e-tr11}
x_1(t)=(0,B-(B-\sqrt{1-(1/2-2\eps)^2})e^{2T-2t}),\qquad x_2(t)=(0,B+(B-\sqrt{1-(1/2-2\eps)^2})e^{2T-2t}).
\end{equation}
The other agents exponentially converge to their barycenter $(0,0)$ following the trajectories
\begin{equation}\label{e-tr12}
x_{3,4,5,6}(t)=((1/2-2\eps)e^{8T-8t},0),\qquad x_{7,8,9,10}(t)=(-(1/2-2\eps)e^{8T-8t},0).
\end{equation}
A direct computation shows that, for all $t\in(0,+\infty)$ it holds $\|x_1(t)-x_2(t)\|<1$, $\|x_3(t)-x_7(t)\|<1$ and $\|x_2(t)-x_3(t)\|>\|x_1(t)-x_3(t)\|\geq 1$, with $\|x_1(t)-x_3(t)\|=1$ for $t=T$ only. This already shows that \eqref{e-tr11}-\eqref{e-tr12} is a classical and Caratheodory solution for \eqref{eq:HK}.

We now define a second Caratheodory solution for \eqref{eq:HK}  denoted by $y_i$ as follows:
\begin{equation}
y_i(t)=\begin{cases}x_i(t)\mbox{~~~for }i=1,\ldots,10\mbox{~and~}t\in[0,T], \\
\tilde Y^b_i(t-T)\mbox{~~~for }i=1,2\mbox{~and~}t\in[T,T_b],\\
\tilde Y^b_3(t-T)\mbox{~~~for }i=3,4,5,6\mbox{~and~}t\in[T,T_b],\\
\tilde Y^b_7(t-T)\mbox{~~~for }i=7,8,9,10\mbox{~and~}t\in[T,T_b],\\
\tilde Y^c_i(t-T_b)\mbox{~~~for }i=1,2\mbox{~and~}t\in[T_b,+\infty),\\
\tilde Y^c_3(t-T_b)\mbox{~~~for }i=3,4,5,6\mbox{~and~}t\in[T_b,+\infty),\\
\tilde Y^c_7(t-T_b)\mbox{~~~for }i=7,8,9,10\mbox{~and~}t\in[T_b,+\infty),\\
\end{cases}
\end{equation}
where $Y^b,Y^c$ are the unique solutions of the 8D-linear systems 
\begin{equation} \label{e-lineari}
 \dot Y^b=\left(\begin{array}{cccc}
-9\ \Id_2 & \Zero_2 & 4\ \Id_2 & 4\ \Id_2 \\
\Id_2 & -\Id_2 & \Zero_2 & \Zero_2 \\
\Id_2 & \Zero_2 & -5\ \Id_2 & 4\ \Id_2 \\
\Id_2 & \Zero_2 & 4\ \Id_2 & -5\ \Id_2 
\end{array}\right) Y^b,\qquad
 \dot Y^c=\left(\begin{array}{cccc}
-9\ \Id_2 & \Id_2 & 4\ \Id_2 & 4\ \Id_2 \\
\Id_2 & -9\ \Id_2 & 4\Id_2 & 4\Id_2 \\
\Id_2 & \Id_2 & -6\ \Id_2 & 4\ \Id_2 \\
\Id_2 & \Id_2 & 4\ \Id_2 & -6\ \Id_2 
\end{array}\right) Y^c
\end{equation}
starting at $(x_1,x_2,x_3,x_7)(T)$ and $(x_1,x_2,x_3,x_7)(T_b)$, respectively, where $T_b$ is the first time for which $\|x_2(t)-x_3(t)\|=1$. Here, the notations $\Id_2,\Zero_2$ denote the identity and zero matrices of dimension 2, respectively. The definition of matrices and time $T_b$ reflect the fact that $y$ represents the case in which agent $1$ starts interacting with agents $3,\ldots,10$ at time $T$. This ensures that agents move closer, up to time $T_b$ in which agent $2$ also starts interacting with all agents. It is easy to prove that $y(t)$ is a Caratheodory solution to \eqref{eq:HK}. Finally, 
one can prove that there exist no other Caratheodory solutions to \eqref{eq:HK} keeping together  the agents in the groups 3,4,5,6 and 7,8,9,10. 

We now build two sequence of sample-and-hold solutions for \eqref{eq:HK}. First fix the finite time interval $[0,\bar{T}]$ with $\bar{T}\in(T,T_b)$ chosen to be of the form $\frac{r+1}{r} T$ with $r\in \N\setminus \{0\}$. This allows us to focus on the simple case in which agent 2 does not interact with agents $3,\ldots,10$. Define the parameter $\Delta t:=\bar{T}/K$ for some $K\in  r^2(\N\setminus\{0,1\})^2$, i.e. $K$ being a positive multiple of $r^2$ and a perfect square strictly larger than $3r^2$. This choice ensures that $K-\sqrt{K},K+3\sqrt{K},\frac{r+1}{r}K\in\N$ and $K+3\sqrt{K} \leq \frac{r+1}{r}K$; these properties will be useful in the following.

Consider now the sample-and-hold solution defined on the uniform sequence $t_k:=k\Delta t$ as 
\begin{equation}\label{e-Euler}
y^K_i(t_{k+1})=y_i^K(t_k)+\sum_{j\neq i} a_{ij}(\|y^K_i(t_k)-y^K_j(t_k)\|) (y^K_j(t_k)-y^K_i(t_k)),
\end{equation}
starting from $y^K_i(0)=x_i(0)$. Direct computations show that $y^K(t)$ converges to $y(t)$, i.e.  to the second Caratheodory solution given above. Indeed, it is sufficient to check that, if $y^K(t)$ converges to $x(t)$ on $[0,T]$, then it holds $$\|y^K_1(T)-y^K_3(T)\|^2=1-\frac{27}{62500}K^{-1}+o(K^{-1})<1$$ for $K$ sufficiently large, i.e. agent 1 starts interacting with agents $3,\ldots,10$. This implies that $y^K(t)$ starts converging to the solution $y(t)$ on $[T,\bar{T}]$, as it is the only Caratheodory solution keeping groups 3,4,5,6 and 7,8,9,10 coinciding and agent 1 interacting with them.

We now build a second sequence of sample-and-hold solutions, now converging to the Caratheodory solution $x(t)$. We use the standard sample-and-hold solution with uniform step $\Delta t=T/K$ defined above, up to time $\Delta t (K-\sqrt{K}).$ We then use a single time step of length $4\sqrt{K}\Delta t$, then the necessary steps $\frac{\bar T-T-3\sqrt{K}}{\Delta t}$ of length $\Delta t$ to reach time $\bar T$. We denote such solution by $x^K(t)$. Direct computations show that $\|x^K_1(t)-x^K_3(t)\|$ is decreasing for $t< (K-\sqrt{K})\Delta t$ and increasing for $t> (K+3\sqrt{K})\Delta t$. Moreover, it holds 
$$\|x^K_1((K-\sqrt{K})\Delta t)-x^K_3((K-\sqrt{K})\Delta t)\|=1 + \frac{36}{56875} K^{-1}+o(K^{-1}),$$
$$\|x^K_1((K+3\sqrt{K})\Delta t)-x^K_3((K+3\sqrt{K})\Delta t)\|=1 + \frac{3186}{1421875} K^{-1}+o(K^{-1}).$$
This implies that, for $K$ sufficiently large, the sequence $x^K$ defined above satisfies  $\|x^K_1(t_k)-x^K_3(t_k)\|> 1$ for each $t_k$. As a consequence, for each $K$, there is no interaction between agent 1 and agents $3,\ldots,10$. The sequence $x^K$ then converges to the Caratheodory solution $x(t)$.

\section{Uniqueness results}\label{sec:uniq}
In this section, we provide positive results
for uniqueness. As shown in Sections
\ref{sec:HK-R} and \ref{sec:HK-multiR}, uniqueness
fails for most concept of solutions. However,
this can be guaranteed for almost every initial data
for Filippov, thus also for Krasovskii and
Caratheodory solutions.

Recall Definition \ref{def:M-disc} and 
observe that each $\MM_{ij}$ is a (smooth) manifold of codimension 1 in $\R^{nN}$, i.e.  $\mathrm{dim}(\MM_{ij})=nN-1$. This implies that $\MM$ is a stratified set of codimension 1.

We first need the following auxiliary result about uniqueness of Filippov solutions. It shows that uniqueness can be lost only after reaching $\MM$.
\begin{prop} \label{prop:uniq-M} 
Let $x(\cdot),y(\cdot)$ be Filippov solutions to \eqref{eq:HK} defined 
on the time interval $[0,T]$, with $T>0$, that satisfy $x(t),y(t)\not\in \MM$ for all $t\in [0,T)$ and $x(T)=y(T)$. It then holds $x(t)=y(t)$ for all $t\in [0,T]$.\\
Similarly if $x(t),y(t)\not\in \MM$ for all $t\in (0,T]$ and $x(0)=y(0)$, then it holds $x(t)=y(t)$ for all $t\in [0,T]$.
\end{prop}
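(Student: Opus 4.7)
The strategy is to reduce to standard Lipschitz ODE uniqueness by exploiting the fact, established in Proposition \ref{prop:M}, that $f$ is locally Lipschitz on the open set $\R^{nN}\setminus\MM$. More strongly, $\R^{nN}\setminus\MM$ decomposes into open connected components indexed by the consistent choices of which pairs $(i,j)$ satisfy $\|x_i-x_j\|<1$ versus $\|x_i-x_j\|>1$; on any such component $C$, the right-hand side $f$ coincides with a polynomial vector field $\bar f_C$ (obtained by freezing each indicator $\chi_{\|x_i-x_j\|<1}$ to the value dictated by $C$), which extends smoothly to all of $\R^{nN}$ and is Lipschitz on every bounded set with a constant independent of the distance to $\MM$.

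Since $x(\cdot)$ is continuous and avoids $\MM$ on $[0,T)$, connectedness forces $x([0,T))$ to lie in a single component $C_x$; similarly $y([0,T))\subset C_y$. The main case is $C_x=C_y=:C$: both trajectories are then classical solutions of the globally Lipschitz ODE $\dot z=\bar f_C(z)$ on $[0,T)$, extend continuously to $[0,T]$, and share the endpoint $x(T)=y(T)$. Setting $\psi(t):=\|x(t)-y(t)\|^2$ and letting $L$ denote the Lipschitz constant of $\bar f_C$ on a bounded set containing both trajectories, the standard two-sided bound $\dot\psi(t)\ge -2L\psi(t)$ yields $\psi(t)\le \psi(T-\varepsilon)\,e^{2L(T-\varepsilon-t)}$ for every $t\in[0,T-\varepsilon]$ and every $\varepsilon>0$; sending $\varepsilon\to 0$ and using continuity of $\psi$ together with $\psi(T)=0$ forces $\psi\equiv 0$, hence $x\equiv y$. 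The forward analogue, starting from $x(0)=y(0)$, is obtained by applying Gronwall forward in time on the component containing $x([0,T))\cup y([0,T))$.

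The principal obstacle is the remaining case $C_x\ne C_y$, in which $p:=x(T)=y(T)$ would have to lie on $\overline{C_x}\cap\overline{C_y}\subset\MM$, and one must rule out that two distinct Filippov trajectories, confined to disjoint cells throughout $[0,T)$, can coalesce at a single point of their common boundary stratum. The plan here is to compare the terminal velocities $\bar f_{C_x}(p)$ and $\bar f_{C_y}(p)$, each of which must be compatible with a trajectory approaching $p$ from the interior of its own cell; using the attractive structure of \eqref{eq:HK} together with the contractivity of the convex hull (Proposition \ref{p-contractive}), which forces pairwise distances to evolve monotonically within a fixed interaction regime, one checks that no pair of distinct cells adjacent along the relevant component of $\MM$ can simultaneously admit incoming trajectories ending at the same $p$. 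This geometric incompatibility closes the contradiction, and the forward-time statement on $(0,T]$ follows by a fully symmetric case split.
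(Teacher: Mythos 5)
Your main case $C_x=C_y$ is essentially the paper's own proof: off $\MM$ both curves are classical solutions of a field that is Lipschitz along them, and a backward (resp.\ forward) Gronwall estimate on $[0,T-\varepsilon]$ followed by $\varepsilon\to0$ and continuity at the shared endpoint gives coincidence. Your cell decomposition with frozen interaction graph is a sensible way of making the Lipschitz constant explicit (the frozen field is polynomial only when the $\phi_{ij}$ are, but it is Lipschitz on bounded sets, which is all you use), and the observation that a connected trajectory avoiding the closed set $\MM$ stays in one cell is correct.

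The gap is the case $C_x\neq C_y$, which you reduce to a ``geometric incompatibility'' that is asserted rather than proved, and the tool you cite cannot deliver it: Proposition \ref{p-contractive} controls the convex hull of the configuration and gives no information on individual pairwise distances, which need not evolve monotonically within a fixed interaction regime; in any case monotonicity is not the relevant invariant. What decides which cells can feed a trajectory into a contact point $p\in\MM$ is the sign of the one-sided derivative of $\theta_{ij}=\|x_i-x_j\|^2$ there, i.e.\ the position of $\alpha_{ij}(p)$ of \eqref{eq:def-alphaij} relative to $0$ and $2\phi_{ij}(1)$, and this analysis becomes delicate exactly when several pairs sit at distance one simultaneously --- which is why the paper introduces the codimension-two set $\widehat{\MM}$ and carries out that argument only in Proposition \ref{p-aeuniq}, not inside Proposition \ref{prop:uniq-M}. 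Moreover, your plan to close the forward half ``by a fully symmetric case split'' cannot work when the common point lies on $\MM$: for the initial datum \eqref{e:E}, the trajectories $x^\gamma$ in \eqref{eq:exp-conv-sol-3a-12} and $x^\delta$ in \eqref{eq:exp-conv-sol-3a-23} are two distinct Caratheodory (hence Filippov) solutions issued from the same point of $\MM$ which avoid $\MM$ for every $t>0$, leaving through two different cells. So no argument can exclude $C_x\neq C_y$ forward in time; the forward statement is only viable when the two solutions share a cell --- e.g.\ when the common point is off $\MM$, or after the cell has been identified as in Claim a) of Proposition \ref{p-aeuniq} --- and that common-cell situation is precisely what your Gronwall step, like the paper's proof, actually covers. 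You should either add that hypothesis explicitly or carry out the $\alpha_{ij}$ sign analysis for the backward half away from degenerate contact points; as written, the second half of your argument cannot be completed.
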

\begin{proof} 
Recall that \eqref{eq:HK} is uniformly Lipschitz continuous
on the open set $\R^{nM}\setminus\MM$.
For the first statement, since $x(t),y(t)\not\in \MM$ on $[0,T)$, then 
the functions $x(\cdot)$ and $y(\cdot)$  are differentiable and satisfy \eqref{eq:HK} on $[0,T)$.
For every $\epsilon$ we can apply Gronwall Lemma backward in time on $[0,T-\epsilon[$, thus getting 
\[
\|x(t)-y(t)\|\leq e^{L(T-\epsilon)}\|x(T-\epsilon)-y(T-\epsilon)\|.
\]
By letting $\epsilon\to 0$ we conclude.\\
The second statement follows similarly, by using Gronwall Lemma forward in time.
\end{proof}

We then prove the following result.
\begin{prop} \label{p-aeuniq}
Consider the system \eqref{eq:HK} with $\phi_{ij}$ Lipschitz continuous.
The set of initial data $\bar x\in \R^{nN}$ for which there exist more than one Filippov solutions for \eqref{eq:HK} has zero Lebesgue measure in $\R^{nN}$.
\end{prop}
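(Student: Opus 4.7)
The plan is to use Proposition~\ref{prop:uniq-M} to reduce the question to the first-hit geometry on the stratified singular set $\MM$, and then apply a Lipschitz-image / Hausdorff-dimension argument.

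Since $\MM$ itself has Lebesgue measure zero in $\R^{nN}$, it suffices to handle initial data $\bar x\in\R^{nN}\setminus\MM$. By Proposition~\ref{prop:uniq-M}, any two Filippov solutions starting at such a $\bar x$ must coincide as long as both remain in $\R^{nN}\setminus\MM$; thus non-uniqueness branches only at the first-hit point $y\in\MM$ of the (locally unique) Lipschitz forward trajectory from $\bar x$. I would then classify $y$ according to its stratum. On the top-dimensional stratum $\MM_{ij}\setminus\bigcup_{(k,l)\neq(i,j)}\MM_{kl}$, the Filippov multifunction $F(y)$ is, by \eqref{e:HKF}, a one-dimensional segment parametrized by a single sliding parameter $\alpha\in[0,1]$, interpolating between the no-interaction ($\alpha=0$) and the full-interaction ($\alpha=1$) choices for the pair $(i,j)$. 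A direct computation of $\tfrac{d}{dt}\|x_i-x_j\|^2$ on each branch, compared with the direction of approach from $\R^{nN}\setminus\MM$, shows that the only Filippov continuation consistent with a Lipschitz arrival at $y$ is the transverse crossing, uniquely determined, \emph{except} when the tangential quantity
\[
D(y) := (x_i-x_j)\cdot\bigl(f^{\mathrm{no}}_i(y)-f^{\mathrm{no}}_j(y)\bigr),
\]
where $f^{\mathrm{no}}_k$ denotes the right-hand side of \eqref{eq:HK} with the $(i,j)$-interaction omitted, attains one of the boundary values $0$ or $2\phi_{ij}(1)$: only in this tangential-arrival case does a sliding continuation along $\MM_{ij}$ coexist with the transverse one. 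Such tangential hits form a codimension-$1$ subset of $\MM_{ij}$, i.e.\ codimension-$2$ in $\R^{nN}$; hits on the lower-dimensional strata $\MM_{ij}\cap\MM_{kl}\cap\cdots$ are automatically codimension $\geq 2$. Thus the set $T\subset\MM$ of "branching" first-hit points is a countable union of $C^1$ submanifolds of dimension at most $nN-2$.

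The main step is then a Lipschitz-image argument. On each open cell of $\R^{nN}\setminus\MM$ the right-hand side of \eqref{eq:HK} is locally Lipschitz, so the backward flow $(y,t)\mapsto\Phi_{-t}(y)$ is Lipschitz on its domain. The set of $\bar x$ whose first hit lies in $T$ is exactly the image of $T\times(0,+\infty)$ under this backward flow; it has Hausdorff dimension at most $(nN-2)+1=nN-1$, hence $\lambda^{nN}$-measure zero. For subsequent hits I would iterate: after a transverse first hit the trajectory enters a new cell where the dynamics is again Lipschitz and the same preimage argument applies. The main obstacle is to make this iteration work uniformly---organizing the possible interaction-graph transitions into a countable family and ruling out Zeno-like accumulations of hits. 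This is achieved by combining Proposition~\ref{p-contractive} (contractivity of the support, which controls how agents can separate) with the finite combinatorics of interaction graphs on $N$ nodes; a countable union of null sets being null, and $\MM$ itself being null, the set of bad initial data has zero Lebesgue measure.
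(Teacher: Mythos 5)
Your reduction to the first hit on $\MM$ via Proposition \ref{prop:uniq-M}, your identification of the branching set (tangential hits where the quantity $D(y)$ — the paper's $\alpha_{ij}$ — takes the boundary values $0$ or $2\phi_{ij}(1)$, together with the pairwise intersections $\MM_{ij}\cap\MM_{kl}$) as a set of codimension $2$, and your "codimension-2 target swept backwards by one-dimensional Lipschitz trajectories is null" conclusion all match the paper's argument (there the branching set is called $\widehat{\MM}$ and the sweep is estimated by a Fubini/Hausdorff-measure bound rather than a backward-flow Lipschitz image; that difference is cosmetic).

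The genuine gap is in your last step, which you yourself flag as "the main obstacle": the iteration over subsequent hits and the possibility of infinitely many crossings of $\MM$ accumulating in finite time. Your proposed fix — that Zeno-like accumulations can be \emph{ruled out} by combining Proposition \ref{p-contractive} with the finite combinatorics of interaction graphs — is not justified and is unlikely to work as stated: finiteness of the graph set only says some pair $(i,j)$ must cross $\|x_i-x_j\|=1$ infinitely often, and contractivity of the support does not prevent $\|x_i(t)-x_j(t)\|$ from oscillating across the value $1$ infinitely many times in finite time. The paper does not exclude such accumulation; it \emph{absorbs} it: between two consecutive crossing times of the same pair, Rolle's theorem gives $\tau_\nu$ with $\frac{d}{dt}\|x_i-x_j\|^2(\tau_\nu)=0$, and since this derivative equals $\alpha_{ij}(x(\tau_\nu))$ or $\alpha_{ij}(x(\tau_\nu))-2\phi_{ij}(1)$ off $\MM$, passing to the limit shows that the accumulation point of the hitting times lies in $\widehat{\MM}$ (the paper's Claim b). Hence non-uniqueness always requires the (still unique, Lipschitz) trajectory to reach the codimension-2 set first, and the measure argument applies in one stroke, with no need to compose backward flows over unboundedly many cells. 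Without this (or an equivalent) argument, your countable-union-of-null-sets conclusion does not cover initial data whose trajectories branch only at an accumulation of crossing times, so the proof as proposed is incomplete at exactly this point.
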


\begin{proof}
Given an initial condition $\bar{x}$, we define $X_{\bar{x}}$ 
to be the set of solutions  $x(\cdot)$ to \eqref{eq:HK} defined  
on some time interval  $[0,T(x(\cdot))[$, with $0<T(x(\cdot))\leq +\infty$, and 
satisfying $x(0)=\bar{x}$. We set:
\begin{equation}\label{eq:def-tU}
t_U=\inf \{t:  \exists x(\cdot),y(\cdot)\in X_{\bar{x}}, 
t\leq\min \{T(x(\cdot)),T(y(\cdot))\}, x(t)\not= y(t)\},
\end{equation}
thus lack of uniqueness occurs when $t_U<+\infty$.
Since $\MM$ is a stratified set of codimension 1, it has zero Lebesgue measure in  $\R^{nN}$. Thus we only need to prove that the following set
has zero Lebesgue measure:
\begin{equation}\label{eq:def-A}
A=\{\bar{x}\in \R^{nN}\setminus\MM: t_U<+\infty\}.
\end{equation}
For $\bar x\in A$, we define:
\begin{equation}
\tilde{t}=\inf \{t: \exists x(\cdot)\in X_{\bar{x}}, x(t)\in\MM\}.
\end{equation}
Since \eqref{eq:HK} is Lipschitz continuous on $ \R^{nN}\setminus\MM$,
all solutions $x(\cdot)$ in $X_{\bar{x}}$ coincide up to time $\tilde{t}$, and,
since $\MM$ is closed, we have $x(\tilde{t})\in\MM$.
Therefore $\tilde{x}=x(\tilde{t})$, with
$x(\cdot)\in X_{\bar{x}}$, depends only on $\bar{x}$
and not on the chosen solution $x(\cdot)\in X_{\bar{x}}$.
Now, given $i,j\in\{1,\ldots,N\}$, $i\not=  j$, consider the
expression:
\begin{equation}\label{eq:def-alphaij}
\alpha_{ij}(x)=(x_i-x_j)\cdot 
\left(\sum_{k\not= i,j}  a_{i,k}(\|x_i-x_k\|)  (x_k-x_i)-
\sum_{k\not= i,j}  a_{j,k}(\|x_j-x_k\|)  (x_k-x_j)
\right)
\end{equation}
and define the following sets:
\begin{equation}
\widehat{\MM}_{ij}=\{x\in\MM_{ij}: \alpha_{ij}(x)\in\{0,2\}\}.
\end{equation}
Define the set of quadruplets of indexes $I \subset \{1,\ldots,N\}^4$ by
$$I=\{(i,j,k,l): i\neq j, k\neq l, (i,j)\neq (k,l),  (i,j)\neq (l,k)\}.$$
Finally set:
\begin{equation}
\widehat{\MM}_{ijkl}= \MM_{ij}\cap \MM_{kl},\quad
\widehat{\MM} = \cup_{i,j:i\not= j} \widehat{\MM}_{ij} \bigcup
\cup_{(i,j,k,l)\in I} \widehat{\MM}_{ijkl}.
\end{equation}
Notice that each set $\widehat{\MM}_{ij}$ is of codimension two and the same is true for $ \widehat{\MM}_{ijkl}$
if $(i,j,k,l)\in I$. Therefore $\widehat{\MM}$ is
of codimension 2.
We now state the following claim:\\
{\bf Claim a)} If $\tilde{x}\in M\setminus\widehat{\MM}$, then
there exists  $\epsilon>0$ such that  $x(t)\notin\MM$
for $t\in ]\tilde{t},\tilde{t}+\epsilon[$, and
$x\equiv y$ on $[0,\tilde{t}+\epsilon[$
for every $x(\cdot)$, $y(\cdot) \in X_{\bar{x}}$.\\

We  prove Claim a). By assumption there exists a unique (not ordered) couple $\{i,j\}$, $i\not= j$, such that $\tilde{x}\in\MM_{ij}$. Define the function:
\begin{equation}\label{eq:def-thetaij}
\theta_{ij}(t) = \|x_i(t)-x_j(t)\|^2.
\end{equation}
Notice that $\theta_{ij}$ is twice continuously differentiable on $[0,\tilde{t}[$
with bounded derivatives, thus we can define
$\tilde{\xi}=\lim_{t\to\tilde{t}-}\dot{\theta}_{ij}(t)$, i.e. the left limit
of the first derivative of $\theta_{ij}$ at $\tilde{t}$.\\
Assume first $\tilde{\xi}>0$. Then there exists $\epsilon>0$
such that $\theta_{ij}$ is strictly increasing
on $]\tilde{t}-\epsilon,\tilde{t}[$,
thus $\theta_{ij}(t)<1$ on $]\tilde{t}-\epsilon,\tilde{t}[$.
Moreover, since $x(\tilde{t})\notin\widehat{\MM}$, possibly restricting
$\epsilon$, we have that $x(t)\notin \widehat{\MM}$ on  
$]\tilde{t}-\epsilon,\tilde{t}+\epsilon[$. 
Recalling \eqref{eq:def-alphaij}, we deduce $\tilde{\xi}=\alpha_{ij}(\tilde{x})-2>0$. Now, given $x(\cdot)\in  X_{\bar{x}}$ 
for almost every $t\in ]\tilde{t},\tilde{t}+\epsilon[$
there exists $\beta_1$, $\beta_2\in [0,1]$ such that:
\begin{eqnarray*}
\dot{\theta}_{ij}(t)&=& (x_i(t)-x_j(t))\cdot\\
&&
\left(\sum_{k\not= i,j}  a_{i,k}  (x_k(t)-x_i(t))-
\sum_{k\not= i,j}  a_{j,k} (x_k(t)-x_j(t))
+\beta_1 (x_j(t)-x_i(t))-\beta_2 (x_j(t)-x_i(t))\right),
\end{eqnarray*}
where we omitted the arguments of $a_{i,k}$ and $a_{j,k}$ for simplicity.
It follows $\dot{\theta}_{ij}(t)\geq \alpha_{ij}(x(t))-2>0$ for $\epsilon$
sufficiently small. Thus $\theta_{ij}(t)>1$ on $]\tilde{t},\tilde{t}+\epsilon[$
and $x(t)\notin\MM$ on the same interval. 
Proposition \ref{prop:uniq-M} implies that
all solutions coincide on the interval $]\tilde{t},\tilde{t}+\epsilon[$
and we proved Claim a) for $\tilde{\xi}>0$.\\
The case $\tilde{\xi}<0$ can be treated in an entirely similar way,
concluding that $\theta_{ij}<1$ on on $]\tilde{t},\tilde{t}+\epsilon[$.
Finally, notice that the case $\tilde{\xi}=0$ is excluded since
$\tilde{x}\notin\widehat{\MM}$.
The proof of Claim a) is finished.

We now state the next claim:

{\bf Claim b)} Assume there exists $x(\cdot)\in X_{\bar{x}}$ such
that $x(t)\notin\widehat{\MM}$ for $t\in [0,T[$, $T>0$. Then, it either holds $x(T)\in \widehat{\MM}$ or there exists $\eps>0$ such that all solutions in $X_{\bar{x}}$ coincide in $[0,T+\eps)$.\\

We now prove Claim b). Given $\bar x$, we find $\tilde t$ such that $x(t)\not in \MM$ for all $t\in[0,\tilde t)$ and $\tilde{x}=x(\tilde t)\in \MM$. If $\tilde x\in \widehat{\MM}$, then $T=\tilde t$. Otherwise, define $t^1_{\MM}=\tilde{t}$ and, using Claim a), extend $x(\cdot)$ on the time interval $(t^1_{\MM},t^2_{\MM})$, where the right extremum is given by
\begin{equation}
t^2_{\MM}=\inf \{t: t>\tilde{t}, \exists x(\cdot)\in X_{\bar{x}}, x(t)\in\MM\}.
\end{equation}
If $x(t^2_\MM)\notin\widehat{\MM}$, then we can define
$t^3_\MM$ and so on. That is, as long as the trajectory from $\bar{x}$
does not reach $\widehat{\MM}$, we can set:
\begin{equation}\label{eq:def-tiMM}
t^{\nu}_{\MM}=\inf \{t: t>t^{\nu-1}_{\MM}, \exists x(\cdot)\in X_{\bar{x}}, x(t)\in\MM\}.
\end{equation}
Again by Claim a), observe that the trajectory starting from $\bar x$ is unique on all intervals $[0,t^\nu_{\MM}]$. Thus, if there exists $t^\nu_{\MM}>T$, there exists $\eps>0$ such that all solutions in $X_{\bar{x}}$ coincide in $[0,T+\eps)$.

Otherwise, assume that $t^\nu_{\MM}\leq T$ for all $\nu$. Since this is an increasing and bounded function, it admits a limit $\bar t:=\lim_{\nu\to+\infty} t^\nu_{\MM}$. We aim to prove $\bar t=T$. Since pairs $i,j$ are in finite number, there exists $i,j$, $i\not= j$, anda subsequence, still indicated by $t^\nu_{\MM}$, such that $x(t^\nu_{\MM})\in\MM_{ij}$ and thus $\theta_{ij}(t^\nu_{\MM})=1$ (see \eqref{eq:def-thetaij}). 
From the proof of Claim a), we deduce that $\theta_{ij}$ is
continuously differentiable and not equal to $1$
on every interval $]t^\nu_{\MM},t^{\nu+1}_{\MM}[$,
and satisfies $\theta_{ij}(t^\nu_{\MM})=\theta_{ij}(t^{\nu+1}_{\MM})=1$.
Thus there exists $\tau_\nu\in ]t^\nu_{\MM},t^{\nu+1}_{\MM}[$
such that $\dot{\theta}_{ij}(\tau_\nu)=0$.
From the proof of Claim a),
we have that for every $\nu$ either
$\dot{\theta}_{ij}(\tau_\nu)=\alpha_{ij}(x(\tau_\nu))$
or $\dot{\theta}_{ij}(\tau_\nu)=\alpha_{ij}(x(\tau_\nu))-2$.
Passing to the limit in $\nu$, we get $x(\bar{t})\in\widehat{\MM}$. We thus have $\bar{t}=T$, that proves the claim.\\

We now use Claim b) to prove the main result. Define:
\begin{equation}
t_{\widehat{\MM}}=
\inf \{t: \exists x(\cdot)\in X_{\bar{x}}, x(t)\in\widehat{\MM)}\},
\end{equation}
and recall the definition of $t_U$ in \eqref{eq:def-tU}. Claim b) ensures that, for $t_U<+\infty$, it holds$t_{\widehat{\MM}}\leq t_U$.

This implies that 
$A=\{\bar{x}\in \R^{nN}\setminus\MM: t_{\widehat{\MM}}\leq t_U<+\infty\}$.
We denote by $H^r$ the Hausdorff measure of dimension $r$ in $\R^{nN}$. Each $x(\cdot)\in X_{\bar{x}}$, $\bar{x}\in A$,
is Lipschitz continuous and, by Proposition \ref{prop:uniq-M},
it coincides (at least) up to $t_{\widehat{\MM}}$, 
thus $H^{1+\epsilon}(\{x(t):t\in [0,t_{\widehat{\MM}}],x(\cdot)\in X_{\bar{x}}\})=0$ for every $\epsilon>0$. 
By Fubini Theorem, since $\widehat{\MM}$ is of codimension 2,
for $0<\epsilon<1$ we have:
\[
H^{nN}(A)\leq \int_{\widehat{\MM}} 
\left(H^{1+\epsilon}(\{x(t):t\in [0,t_{\widehat{\MM}}],x(\cdot)\in X_{\bar{x}}\})
\right)
d H^{nN-2+\epsilon}(\bar{x})=0.
\]
Since $H^{nN}$ coincides with the Lebesgue measure on $\R^{nN}$, the set $A$ has zero measure.
\end{proof}

\section{Property P2): clustering for general solutions}\label{sec:P2}
In this section, we discuss property P2), also called clustering, for solutions of \eqref{eq:HK}. We prove that P2) holds for Filippov solutions, even though the limit is not uniquely determined by the initial data. This implies that  P3) fails, as already shown in Section \ref{sec:HK-R}.

First observe that \eqref{eq:HK} can be written as a gradient flow as follows. Define 
$$\Phi_{ij}(r)=\begin{cases}
\int_0^r\phi_{ij}(s)s\,ds&\mbox{~~for~}r<1\\
\int_0^1\phi_{ij}(s)s\,ds&\mbox{~~for~}r\geq 1
\end{cases}
$$
and observe that, if $\|x_i-x_j\|\neq 1$, for every $i\not= j$, then
$$\dot x_i=-\sum_{j\neq i} \nabla \Phi_{ij}(|x_i-x_j|).$$
This suggests to define the following candidate Lyapunov function:
$$V(x)=\sum_{i,j\neq i} \Phi_{ij}(|x_i-x_j|)$$ 
and observe that it holds $\dot V(x(t))\leq 0$ for a.e. time, \FR{since $\nabla V(x)\cdot v\leq 0$ for each $v\in F(x)$.}  We now prove the following result about clustering.
\begin{prop} \label{p-Fclust} Let $x_1(t),\ldots,x_N(t)$ be a Filippov  solution of \eqref{eq:HK}. The following clustering properties hold:
\begin{itemize}
\item each agent satisfies $\lim_{t\to+\infty} x_i(t)=x_i^\infty$ for some $x_i^\infty \in \R^n$;
\item the limits satisfy the following: for each $i\neq j$ it either holds $x_i^\infty=x_j^\infty$ or $\|x_i^\infty-x_i^\infty\|\geq 1$.
\end{itemize}

The same result holds for the variant model \eqref{e:HK1}.
\end{prop}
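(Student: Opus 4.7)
The argument centers on the Lyapunov function $V$ defined just before the statement. First, Proposition~\ref{p-contractive} confines each $x_i(t)$ to the convex hull of the initial positions, so the trajectory is uniformly bounded and, via the sublinear estimate of Proposition~\ref{prop:FK-HK}, Lipschitz on $[0,+\infty)$. Since each $\Phi_{ij}$ is nondecreasing, $V\geq 0$; it is also locally Lipschitz.

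Second, I would verify that $V(x(t))$ is non-increasing along any Filippov solution, with a quantitative dissipation estimate. On $\R^{nN}\setminus\MM$ the dynamics is smooth and a direct computation using $\phi_{ij}=\phi_{ji}$ gives $\nabla_{x_i}V=-2f_i(x)$, hence $\dot V=-2\|\dot x\|^2$. On a sliding mode along a stratum $\MM_{ij}$, the Filippov coefficient $\alpha_{ij}$ is determined by the constraint $\tfrac{d}{dt}\|x_i-x_j\|^2=0$; substituting into the one-sided derivative of $V$, and using that $\Phi_{ij}$ has right-derivative $0$ at $r=1$, yields $\dot V\leq -c\,\|v^-(x(t))\|^2$ for a.e.\ $t$, where I set $v^-_i(x):=\sum_{j\neq i,\,\|x_i-x_j\|<1}\phi_{ij}(\|x_i-x_j\|)(x_j-x_i)$. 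Since $V\geq 0$ is non-increasing, $V(x(t))$ converges to some $V^\infty$ and $\int_0^{+\infty}\|v^-(x(t))\|^2\,dt<+\infty$.

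Third, I would run a LaSalle-type argument. For any $x^*\in\omega(x(0))$, say $x^*=\lim_n x(t_n)$ with $t_n\to+\infty$, the shifted trajectories $y_n(s):=x(t_n+s)$ are equi-Lipschitz. By Arzel\`a--Ascoli together with closedness of the Filippov solution set for upper semicontinuous, compact, convex right-hand sides, a subsequence converges uniformly on compacts to a Filippov solution $y^*$ satisfying $y^*(0)=x^*$ and $V\circ y^*\equiv V^\infty$. The Lyapunov estimate above then forces $v^-(y^*(s))\equiv 0$, which is precisely the condition that for every $i\neq j$, either $y^*_i(s)=y^*_j(s)$ or $\|y^*_i(s)-y^*_j(s)\|\geq 1$. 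Taking $s=0$, $x^*$ is a clustering configuration in the sense of the second bullet of the statement.

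The main obstacle will be to upgrade this subsequential conclusion into convergence of the full trajectory to a single limit $x^\infty$: a priori $\omega(x(0))$ could be a continuum inside a cluster manifold. To close the argument, I would exploit that $\omega(x(0))$ is compact and connected, and that it sits in the finite union, indexed by partitions $\pi$ of $\{1,\ldots,N\}$, of the cluster manifolds $\{y\,:\,y_i=y_j\iff i\sim_\pi j,\ \|y_a-y_b\|\geq 1\text{ for distinct clusters}\}$. Connectedness together with the strict separation $\geq 1$ between distinct clusters forces a single partition $\pi^\star$. For $t$ sufficiently large, inter-cluster distances therefore stay bounded away from $1$, and the dynamics decouples along $\pi^\star$; each block becomes an autonomous HK subsystem whose in-cluster barycenter is preserved (by Proposition~\ref{prop:FK-HK} applied to the subsystem) and whose Lyapunov function forces every block-agent to converge to that barycenter. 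This identifies a unique limit $x^\infty_i$ for each $i$. The variant~\eqref{e:HK1} is handled identically, since Proposition~\ref{prop:FK-HK} shows its Filippov multifunction coincides with that of~\eqref{eq:HK}.
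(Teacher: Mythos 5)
Your dissipation step is not justified as written. Along a Filippov solution the coefficient attached to a pair at distance exactly $1$ is \emph{not} determined by the constraint $\tfrac{d}{dt}\|x_i-x_j\|^2=0$: several pairs may touch distance one simultaneously, and the solution need not slide at all. What is true is only that, for a.e.\ $t$, every pair with $\|x_i(t)-x_j(t)\|=1$ satisfies $(x_i-x_j)\cdot(\dot x_i-\dot x_j)=0$ (a.e.\ on a level set of an absolutely continuous function its derivative vanishes). Using this, and the fact that in $F(x)$ the interaction of a touching pair enters the two agents with a common coefficient (this is what makes $\sum_i v_i=0$ in Proposition \ref{prop:FK-HK}), the cross terms you would need to control cancel and one gets $\tfrac{d}{dt}V(x(t))=-2\|\dot x(t)\|^2$ a.e.; your claimed bound $\dot V\leq -c\|v^-(x(t))\|^2$ does not follow from the reasoning you give, because at a distance-one contact $\dot x$ and $v^-$ differ by terms whose inner product with $v^-$ has no sign a priori. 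This part is repairable (monotonicity of $V$ plus a quantitative decay whenever some pair distance lies in a compact subinterval of $(0,1)$ is what the paper actually uses), and your LaSalle/Arzel\`a--Ascoli identification of $\omega$-limit points as clustered configurations is a legitimate alternative to the paper's direct argument.

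The genuine gap is your final step. You assert that once connectedness selects a single partition $\pi^\star$, inter-cluster distances stay bounded away from $1$ for large times, so the dynamics decouples into autonomous blocks with conserved block barycenters. This is false for Filippov solutions of \eqref{eq:HK}: distinct limit clusters may lie at distance exactly $1$, and the Filippov coefficients then allow a nonzero inter-cluster force for \emph{all} times. The paper's own three-agent solution \eqref{eq:Fil-sol-3ag-R} (the family $z$ of Proposition \ref{p-Fsol} with $t_2=+\infty$) keeps agents $1$ and $2$ at distance exactly one forever with a strictly positive sliding coefficient: the block $\{1\}$ is never autonomous, its ``barycenter'' is not constant, and its limit exists only because the inter-cluster force decays integrably. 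This is exactly the difficulty your plan skips: knowing that each $\omega$-limit point is clustered does not yield convergence of the trajectory unless you control the total effect of interactions at distance equal to (or dipping just below) $1$ between different clusters. The paper closes this by fixing $\eps$, a time after which every pair is either $\eps$-close or $(1-\eps)$-far, and bounding $\int_0^T|\dot y^1|\,dt$ for each cluster center $y^1$ by the total dissipation of $V$ (intra-cluster terms cancel by antisymmetry, inter-cluster terms are absorbed into $-\dot V$), so that $y^1$ has a limit. Without an estimate of this type, the first bullet of the statement remains unproved in your argument.
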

\begin{remark} One might try to use a LaSalle principle to prove this result. Even though the proof below is based on the same ideas, we need to observe that $V$ is not proper, it is not differentiable, and that the largest invariant set of $\nabla V=0$ (for any reasonable definition of it) is never reduced to a point.
\end{remark}
\begin{proof} The proof is identical in the two cases  \eqref{eq:HK}-\eqref{e:HK1}. It is first necessary to observe that $V$ is not proper, i.e. it does not satisfy $V(x)\to +\infty$ when $|x|\to+\infty$, as it depends on pairwise distances only. Nevertheless, recall that the set $\Omega(t):=\mathrm{co}(x_i(t))$ is weakly contracting, see Proposition \ref{p-contractive}. As a consequence, we have that $x(t)$ is a compact trajectory, hence it converges to its $\omega$-limit, that is bounded.

Let now $x^\infty=(x_1^\infty,\ldots,x_N^\infty)$ being a point in the $\omega$-limit and assume that it exists $i,j$ such that $|x_i^\infty-x_j^\infty|=L\in(0,1)$. By definition of $\omega$-limit, it exists an increasing sequence $t^k\to+\infty$ such that $|x_i(t^k)-x_j(t^k)|\in(L-\eps,L+\eps)$ for any $\eps>0$. By observing that velocities for \eqref{eq:HK} are bounded, there exists a uniform $\delta$ such that $|x_i(t)-x_j(t)|\in(L-2\eps,L+2\eps)$ for all $t\in (t^k-\delta,t^k+\delta)$. Choose $\eps<\frac12 \min(L,1-L)$ and observe that 
$$V(x(t^k+\delta))-V(x(t^k-\delta))\leq -2\delta \phi_{ij}(L-2\eps).$$ By eventually taking a subsequence of $t^k$, one can always assume $t^k+\delta<t^{k+1}-\delta$. By recalling that $V$ is decreasing in time, it then holds $V(t^k+\delta)\leq V(x(0))-2\delta \phi_{ij}(L-2\eps) k$, hence $\lim_{k\to+\infty} V(t^k+\delta)=-\infty$. This contradicts the fact that $V$ is bounded from below.

We have now proved that any $x^\infty$ in the $\omega$-limit satisfies either $x_i^\infty=x_j^\infty$ or $\|x_i^\infty-x_j^\infty\|\geq 1$. We now need to prove that the $\omega$-limit is reduced to a point. We first define the transitive relation 
$$i\sim j\mbox{~~ when~~}\lim_{t\to+\infty} x_i(t)-x_j(t)=0.$$

It is crucial to observe that either it holds $i\sim j$ or $\lim\inf_{t\to+\infty}|x_i(t)-x_j(t)|\geq 1$. Indeed, if $\lim\inf_{t\to+\infty}|x_i(t)-x_j(t)|\in(0,1)$, there exist times $t^k\to+\infty$ such that $|x_i(t^k)-x_j(t^k)|\in (L-\eps,L+\eps)$ with $L\in(0,1)$ and $\eps>0$ sufficiently small, that in turn ensure $\lim_{k\to+\infty} V(t^k+\delta)=-\infty$ as explained above. Contradiction.

It then makes sense to define clusters $C^1,\ldots,C^k$, each being the class of equivalence of indexes $i=1,\ldots N$ with respect to $\sim$. By definition, given $\eps>0$,  there exists a time $T_0$ such that all agents satisfy either $|x_i(t)-x_j(t)|<\eps$ or $|x_i(t)-x_j(t)|>1-\eps$ for all times $t>T_0$. Eventually translating such time, we assume $T_0=0$ from now on.

We are now ready to prove that the center of each cluster converges. Let $y^1$ be the center of cluster $C^1$, i.e. $y_1=\frac{1}{N_1}\sum x_i$ where $N_1$ is the number of elements of $C^1$. Since $\dot y^1$ is defined for almost every time, \FR{one can write $\dot y^1$ by the following observation: for each index $i\in C^1$, the contribution to $\dot x_i$ of each agent $j$ satisfying $\|x_i-x_j\|\neq 1$ is uniquely determined, while for $j$ such that $\|x_i-x_j\|=1$ there exists $\alpha_{ij}\in[0,1]$ such that the contribution is $\alpha_{ij}a_{ij}(x_j-x_i)$. By choosing $\alpha_{ij}=1$ for $j\not \in C^1$ with $\|x_i-x_j\|\neq 1$, one has}

\begin{eqnarray*}
\int_0^T |\dot y^1(t)|\,dt&=&\frac{1}{N_1}\int_0^T\left| \sum_{i,j\in C^1} a_{ij}(x_j-x_i)+\sum_{i\in C^1,j\not\in C^1} \alpha_{ij}(t)a_{ij}(x_j-x_i)\right|\leq\\
&\leq& 0+\frac{1}{N_1}\int_0^T\sum_{i\in C^1,j\not\in C^1} a_{ij} |x_i(t)-x_j(t)|\leq 
\frac{1}{N_1(1-\eps)}\int_0^T \sum_{i,j\neq i}a_{ij}|x_i(t)-x_j(t)|^2=\\
&=&\frac{1}{N_1(1-\eps)}\int_0^T-\dot V(x(t))\,dt\leq \frac{V(0)-V(T)}{N_1(1-\eps)}<+\infty.
\end{eqnarray*}
where we first used antisymmetry for $i,j\in C^1$, then we recalled that $V(t)$ is decreasing and bounded from below. Since $\dot y_1$ is integrable, then $y^1(t)$ admits a limit for $t\to +\infty$. Since $x_i-x_j\to 0$ for all $i,j\in C^1$ and the center of the cluster admits a limit, then all $x_i$ converge to such limit. \end{proof}

\begin{remark} For the system  \eqref{e:HK1} final clusters may be
at distance one as shown by next example with three agents in dimension two.
Consider the initial condition
$x_{1,0}=(0,\eps)$, $x_{2,0}=(0,-\eps)$, and $x_{3,0}=(1,0)$,
with $\eps< \frac12$. The unique Caratheodory solution is given by
$$x_{1,0}=(0,\eps e^{-t}),\qquad x_{2,0}=(0,-\eps e^{-t}),
\qquad x_{3,0}=(1,0),$$
thus converging to two clusters $(0,0)$ and $(1,0)$.
\end{remark}}

\section{Proof of main Theorems}\label{s-main}
In this section, we prove the three theorems stated in the introduction. Proofs actually collect results given in previous sections.\\

{\noindent{\textsc{Proof of Theorem \ref{t-exun}.}}} Existence of solutions in the Filippov, Krasovskii, Caratheodory, CLSS and stratified sense was shown in Section \ref{s-existence}. Non-existence of classical solutions is well-known, as shown in the counterexample in Proposition \ref{p-nonex-class}.\\
Uniqueness of classical solutions is standard, using Cauchy-Lipschitz argument of uniqueness.
Non-uniqueness of Filippov, Krasovskii, and Caratheodory solutions is proved by the counterexamples of Proposition \ref{prop:toy-ex}. Non-uniqueness of CLSS solutions is proved in Proposition \ref{p-nonCLSS}. 
Uniqueness of stratified solutions for a fixed stratification is given by definition.\\
Uniqueness of Filippov solutions for almost every initial data was proved in Proposition \ref{p-aeuniq}. This induces the same result for all other concepts of solutions, due to Proposition \ref{p-inclusion}. {$\hfill\Box$\vspace{0.1 cm}\\}

{\noindent{\textsc{Proof of Theorem \ref{t-prop}.}}} Filippov and Krasovskii solutions coincide, and they do not satisfy P3) as shown in Proposition \ref{prop:bar-clu-2a}. They satisfy P1), as proved in Proposition \ref{prop:FK-HK}. They satisfy P2), as shown in Proposition \ref{p-Fclust}.\\
Classical, Caratheodory, CLSS, stratified solutions satisfy P1)-P2) due to the inclusion in Filippov solutions, see Proposition \ref{p-inclusion}.

Caratheodory solutions do not satisfy P3), again by Proposition \ref{prop:bar-clu-2a}. CLSS solutions do not satisfy P3), as shown by the counterexample in Section \ref{s-nonCLSS}.

Classical solutions satisfy property P3), as a direct consequence of uniqueness of the solution. Similarly, stratified solutions for a fixed stratification are unique, by definition, thus satisfy P3). However, the asymptotic
state depends on the stratification as shown in Proposition \ref{prop:toy-ex}.
{$\hfill\Box$\vspace{0.1 cm}\\}

{\noindent{\textsc{Proof of Theorem \ref{t-prop2}.}}} Krasovskii and Filippov multifunctions are insensitive to the value of $a_{ij}(1)$ by definition, thus the structure of Krasovskii and Filippov solutions does not change.\\
As for classical solutions, consider the Cauchy problem \eqref{eq:HK1d2a}-\eqref{eq:ic-toy}, as in Proposition \ref{prop:toy-ex}. If $a_{ij}(1)=0$, the Proposition states that the only classical solution is the constant one. Instead, if $a_{ij}(1)=1$, it is easy to prove that the unique classical solution is given by $$x_1(t)-x_2(t)=e^{-2t} (x_{1,0}-x_{2,0}),\qquad
x_1(t)+x_2(t)=x_{1,0}+x_{2,0}.$$
Caratheodory solutions of the variant model \eqref{e:HK1} are different than Caratheodory solutions of \eqref{eq:HK}, as shown in the examples of Poposition \ref{p-Cara1incl}.\\
CLSS solutions are distinct in the two cases, as shown in Proposition \ref{p-nonCLSS}.\\
For stratified solution, consider the Cauchy problem \eqref{eq:HK1d2a}-\eqref{eq:ic-toy}, as in Proposition \ref{prop:toy-ex}. For \eqref{e:HK1} the first stratification $S_1$ is not admissible, thus stratified solutions
are different than those for \eqref{eq:HK}.\\
Proof and counterexamples for Properties P1-2-3) are identical to the study of \eqref{eq:HK}.

{$\hfill\Box$\vspace{0.1 cm}\\}


\bibliographystyle{abbrv}
\bibliography{biblio}
\end{document}